\newtheorem{theorem}{Theorem}
\newtheorem*{maintheorem*}{Main Theorem}
\newtheorem{lemma}[theorem]{Lemma}
\newtheorem{corollary}[theorem]{Corollary}
\newtheorem{proposition}[theorem]{Proposition}
\newtheorem*{question*}{Question}
\newtheorem*{lem*}{Lemma}
\theoremstyle{definition}
\theoremstyle{remark}
\newtheorem{remark}[equation]{Remark}
\newtheorem*{remark*}{Remark}
\newtheorem{example}[equation]{Example}
\DeclareMathOperator{\Aut}{Aut}
\DeclareMathOperator{\Pic}{Pic}
\DeclareMathOperator{\CH}{CH}
\DeclareMathOperator{\Cent}{Cent}
\DeclareMathOperator{\Tr}{Tr}
\DeclareMathOperator{\Spec}{Spec}
\DeclareMathOperator{\Hom}{Hom}
\DeclareMathOperator{\grad}{gr}
\DeclareMathOperator{\rank}{rank}
\renewcommand{\AA}{\mathbb A}
\newcommand{\CC}{\mathbb C}
\newcommand{\Cst}{\CC^{*}}
\newcommand{\ZZ}{\mathbb Z}
\newcommand{\QQ}{\mathbb Q}
\newcommand{\NN}{\mathbb N}
\newcommand{\GG}{\mathbb G}
\newcommand{\Zpn}{(\ZZ / p\ZZ)^n}
\newcommand{\OO}{\mathcal O}
\newcommand{\VVV}{\mathcal V}
\newcommand{\SSS}{\mathcal S}
\newcommand{\Sn}{\SSS_{n}}
\newcommand{\mm}{\mathfrak m}
\newcommand{\simto}{\overset{\sim}{\rightarrow}}
\newcommand{\into}{\hookrightarrow}
\newcommand{\ps}{\par\smallskip}
\title[Is the affine space determined by its automorphism group?]
{Is the affine space determined by its automorphism group?}
\author[H. Kraft \and A. Regeta \and I. van Santen]
{Hanspeter Kraft \and Andriy Regeta \and 
Immanuel van Santen (born Stampfli)}
\address{Departement Mathematik und Informatik, 
Universit\"at Basel,\newline
\indent Spiegelgasse 1, CH-4051 Basel, Switzerland}
\email{Hanspeter.Kraft@unibas.ch}
\address{Mathematisches Institut, 	
Universit\"at zu K\"oln,\newline
\indent Weyertal 86-90, 50931 K\"oln, Germany}
\email{andriyregeta@gmail.com}
\address{Departement Mathematik und Informatik, 
Universit\"at Basel,\newline
\indent Spiegelgasse 1, CH-4051 Basel, Switzerland}
\email{immanuel.van.santen@math.ch}
\begin{document}


\subjclass[2010]{14R10, 14R20, 14J50, 22F50}

\begin{abstract}
	In this note we study the problem of characterizing the complex affine space
	$\AA^n$ via its automorphism group. We prove
	the following. Let $X$ be an irreducible quasi-projective $n$-dimensional variety
	such that $\Aut(X)$ and $\Aut(\AA^n)$ are isomorphic as abstract groups. 
	If $X$ is either quasi-affine and toric or $X$ is smooth with Euler characteristic $\chi(X) \neq 0$
	and finite Picard group $\Pic(X)$, 
	then $X$ is isomorphic to $\AA^n$.
	
	The main ingredient is the following result.
	Let $X$ be a smooth irreducible quasi-projective
	variety of dimension $n$ with finite $\Pic(X)$.
	If $X$ admits a faithful $(\ZZ / p \ZZ)^n$-action for a prime $p$ and $\chi(X)$ is not
	divisible by $p$, then the identity component of the centralizer 
	$\Cent_{\Aut(X)}( (\ZZ / p \ZZ)^n)$ is a torus.
\end{abstract}

\maketitle

\section{Introduction}
In 1872, Felix Klein suggested as part of his Erlangen Programm
to study geometrical objects through their symmetries.
In the spirit of this program it is natural to ask to which
extent a geometrical object is determined by its automorphism group.
For example, a smooth manifold, a symplectic manifold 
or a contact manifold is determined by its automorphism group, see 
\cite{Fi1982Isomorphisms-betwe,Ry1995Isomorphisms-betwe, 
Ry2002Isomorphisms-betwe}.

We work over the field of complex numbers $\CC$.
For a variety $X$ we denote by $\Aut(X)$ the group of regular automorphisms 
of $X$. As the automorphism group of a variety is usually quite small, 
it almost never determines the variety. 
However, if $\Aut(X)$ is large, like for the affine space $\AA^{n}$, 
this might be true. Our guiding question is 
the following.

\begin{question*} 
	Let $X$ be a variety. Assume that 
	$\Aut(X)$ is isomorphic to the group $\Aut(\AA^n)$. 
	Does this imply that $X$ is isomorphic to the affine 
	space $\AA^n$?
\end{question*}

This question cannot have a positive answer for \emph{all}
varieties $X$. For example, $\Aut(\AA^n)$ and 
$\Aut(\AA^n \times V)$ are isomorphic for any 
complete variety $V$ with 
a trivial automorphism group. Similarly, the automorphism group
of $\AA^n$ does not change if one forms the disjoint union with
a variety with a trivial automorphism group.
Thus we have to impose certain assumptions on $X$.
Moreover, we assume that $n \geq 1$,
since there exist many affine varieties 
with a trivial automorphism group.

In \cite{Kr2017Automorphism-Group}, 
the first author studies the problem of characterizing the 
affine space $\AA^n$ by its automorphism group
regarded as a so-called ind-group. It is shown that if 
$X$ is a connected affine variety such that $\Aut(X)$ and $\Aut(\AA^n)$ 
are isomorphic
as ind-groups, then $X$ and $\AA^n$ are isomorphic as 
varieties. For some generalizations of this result we refer to
\cite{Re2017Characterization-o}.

In dimension two, a generalization of our guiding question is studied in 
\cite{LiReUr2017Characterisation-o}. 
For an irreducible normal affine surface $X$ it is shown 
that if $\Aut(X)$ is isomorphic to $\Aut(Y)$ for an affine toric surface $Y$, 
then $X$ is isomorphic to $Y$.

In order to state our main result, let us introduce the following notation. For a variety $X$
we denote by $\chi(X)$ the Euler characteristic and by $\Pic(X)$ the Picard group.

\begin{maintheorem*}
	\label{thm:AcyclicAndToric}
	Let $X$ be an irreducible quasi-projective variety such that 
	$\Aut(X) \simeq \Aut(\AA^{n})$. 	
	Then $X\simeq \AA^{n}$ if one of the following conditions holds.
	\begin{enumerate}[label = $(\arabic*)$]
		\item \label{thm:AcyclicAndToric:Acyclic} 
		$X$ is smooth,
		$\chi(X) \neq 0$, $\Pic(X)$ is finite, and $\dim X\leq n$;		
		\item \label{thm:AcyclicAndToric:Toric} 
		$X$ is toric, quasi-affine, and $\dim X \geq n$.
	\end{enumerate}
\end{maintheorem*}

As a direct application of this result we get that $\Aut(\AA^n \setminus S)$
and $\Aut(\AA^n)$ are non-isomorphic as abstract groups 
for every non-empty closed subset $S$ in $\AA^n$ with Euler characteristic $\chi(S) \neq 1$.

Let us give an outline of the proof.
Let $\theta \colon \Aut(\AA^n) \simto \Aut(X)$ be an isomorphism.
First, we prove that if a maximal torus of $\Aut(\AA^{n})$ is mapped onto an 
algebraic group via $\theta$ and $X$ is quasi-affine, then $X \simeq \AA^{n}$
(see Proposition~\ref{prop:XandCnIsomorphic}).
Our main result to achieve this condition is the following.

\begin{theorem}
	\label{thm:MainThm}
	Let $X$ and $Y$ be irreducible quasi-projective varieties such that 
	$\dim Y \leq \dim X=:n$. Assume that the following conditions are satisfied:
	\begin{enumerate}[label = $(\arabic*)$]
	\item $X$ is quasi-affine and toric;	
	\item $Y$ is smooth, $\chi(Y) \neq 0$, and $\Pic(Y)$ is finite. 
	\end{enumerate} 
	If $\theta\colon \Aut(X)\simto\Aut(Y)$ is an isomorphism, then $\dim Y=n$, and 
	for each $n$-dimensional torus $T \subseteq \Aut(X)$, the identity component of
	the image $\theta(T)^\circ$ is a closed torus of dimension $n$ in $\Aut(Y)$.
	Furthermore, $Y$ is quasi-affine.
\end{theorem}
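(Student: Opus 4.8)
The plan is to manufacture a large elementary abelian $p$-subgroup of $\Aut(Y)$ from the torus in $\Aut(X)$, use a fixed point of it to bound $\dim Y$ from below, and then feed it into the centralizer theorem stated in the abstract. Since $X$ is toric and quasi-affine of dimension $n$, its big torus acts on $X$ and gives an embedding $(\Cst)^{n}\hookrightarrow\Aut(X)$, so $n$-dimensional tori in $\Aut(X)$ exist; I would fix an arbitrary one, $T\subseteq\Aut(X)$, and set $A:=T[p]\cong\Zpn$ for a prime $p$ not dividing $\chi(Y)$ (possible as $\chi(Y)\neq 0$). As $\theta$ is an isomorphism of abstract groups it preserves torsion, so $\theta(A)=\theta(T)[p]\cong\Zpn$ acts faithfully on $Y$.

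First I would establish $\dim Y=n$. By the standard congruence $\chi(Y)\equiv\chi(Y^{\theta(A)})\pmod p$ for an action of a finite $p$-group on a complex variety, and since $p\nmid\chi(Y)$, the fixed locus $Y^{\theta(A)}$ is non-empty; choose $y_{0}\in Y^{\theta(A)}$. As $Y$ is smooth, $\theta(A)$ acts linearly on $\T_{y_{0}}Y\cong\CC^{m}$ with $m:=\dim Y$, and in characteristic zero a finite-order automorphism fixing $y_{0}$ with trivial differential is linearisable near $y_{0}$, hence equal to the identity on a neighbourhood and therefore on all of $Y$; thus $\theta(A)\hookrightarrow\GL(\T_{y_{0}}Y)$. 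Being a finite commutative group of semisimple elements, $\theta(A)$ is simultaneously diagonalisable and so embeds into a torus $(\Cst)^{m}$, whose $p$-torsion has rank $m$. Hence $n=\rank\theta(A)\leq m$, and together with the hypothesis $m\leq n$ this gives $\dim Y=n$.

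Next I would pin down the torus. Applying the centralizer theorem to $Y$ (now known to be $n$-dimensional, smooth, with finite $\Pic(Y)$ and $p\nmid\chi(Y)$) and to the faithful $\Zpn$-action $\theta(A)$, the identity component of $C:=\Cent_{\Aut(Y)}(\theta(A))$ is a torus. Since $T$ is abelian, $\theta(T)$ centralises $\theta(A)$, so $\theta(T)\subseteq C$. The group $\theta(T)\cong(\Cst)^{n}$ is divisible and hence has no proper subgroup of finite index; provided $C/C^{\circ}$ is finite, the image of $\theta(T)$ there is trivial and $\theta(T)\subseteq C^{\circ}$. Then the closure $\overline{\theta(T)}$ is a closed subgroup of the torus $C^{\circ}$, so it is diagonalisable with identity component a subtorus $S:=\theta(T)^{\circ}$ of some dimension $d\leq n$, and divisibility again forces $\theta(T)\subseteq S$. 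Consequently $\theta(A)=\theta(T)[p]\subseteq S[p]\cong(\ZZ/p\ZZ)^{d}$, and comparing ranks yields $n\leq d$, so $d=n$ and $\theta(T)^{\circ}$ is a closed $n$-dimensional torus. I expect the delicate point to be exactly the finiteness of $C/C^{\circ}$: one must know that $C$ is a genuine finite-dimensional algebraic subgroup of the ind-group $\Aut(Y)$, not merely a subgroup with algebraic identity component, and I would extract this from the proof of the centralizer theorem (where $C$ should appear as a linear algebraic group, hence a finite extension of its torus identity component).

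Finally I would deduce that $Y$ is quasi-affine. The $n$-dimensional torus $S=\theta(T)^{\circ}$ acts faithfully on the irreducible $n$-dimensional variety $Y$; as $Y$ is irreducible, a positive-dimensional generic stabiliser would fix a dense open set and thus act trivially, contradicting faithfulness, so the generic orbit is $n$-dimensional and dense. Since $Y$ is smooth, hence normal, $Y$ is a toric variety with acting torus $S$. For a smooth toric variety the Picard group is free, so finiteness of $\Pic(Y)$ forces $\Pic(Y)=0$. Writing $\Sigma$ for the fan of $Y$ in $N_{\mathbb R}$, the condition $\chi(Y)\neq 0$ provides a torus-fixed point and hence a top-dimensional cone, so the rays of $\Sigma$ span $N_{\mathbb R}$ and the exact sequence $0\to M\to\ZZ^{\Sigma(1)}\to\Pic(Y)\to 0$ degenerates to $M\cong\ZZ^{\Sigma(1)}$; thus $\Sigma$ has exactly $n$ rays whose primitive generators form a $\ZZ$-basis of $N$. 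Every cone of $\Sigma$ is therefore a face of the smooth cone $\sigma$ spanned by these rays, so $Y$ is an open toric subvariety of $U_{\sigma}\cong\AA^{n}$; in particular $Y$ is quasi-affine.
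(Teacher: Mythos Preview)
Your argument for $\dim Y = n$ is correct and is exactly the content of Remark~\ref{rem: dimension estimate}, and your deduction that $Y$ is quasi-affine is also correct and very close to the paper's (you use the torus fixed point coming from $\chi(Y)\neq 0$ to see that the rays span, whereas the paper first splits off a torus factor; both work).

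The gap is precisely where you suspect, and it is not repairable along the lines you suggest. Theorem~\ref{thm:Centralizer} only asserts that $C^\circ$ is a torus; neither its statement nor its proof (Proposition~\ref{prop:CentralizerOtherVersion}) shows that the full centraliser $C$ is an algebraic group or that $C/C^\circ$ is finite. That proof produces a regular homomorphism $G^\circ\to(\Cst)^n$ with finite kernel via semi-invariants, but Lemma~\ref{lem:Semi-Invariants} requires the acting group to be \emph{connected}, so it says nothing about $G$ beyond $G^\circ$. All that is available in general is that $C/C^\circ$ is \emph{countable} (Proposition~\ref{prop:Id_component}~\ref{prop:Id_component_4}), and divisibility of $\theta(T)\cong(\Cst)^n$ does not rule out nontrivial countable quotients: as an abstract divisible abelian group $\Cst$ is a direct sum of a copy of $\QQ/\ZZ$ and uncountably many copies of $\QQ$, and such a group certainly surjects onto $\QQ$. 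Hence the step ``$\theta(T)\subseteq C^\circ$'' fails, and with it the $p$-torsion comparison $\theta(A)\subseteq S[p]$ that was supposed to give $n\leq\dim\theta(T)^\circ$.

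The paper obtains $\dim\theta(T)^\circ\geq n$ by a different mechanism (Proposition~\ref{prop:MainProp}). One constructs a chain $\{1\}=T_0\subset T_1\subset\cdots\subset T_n=T$ with $\dim T_i=i$, where each $T_i$ is the centraliser in $T$ of an explicit subset of $\Aut(X)$: when $X$ is not a torus these subsets are unions of homogeneous $\GG_a$-subgroups with linearly independent characters (Proposition~\ref{prop:QuasiAffineToric}), and when $X$ is a torus one uses cyclic permutations from the copy of $\Sn$ in $\Aut(X)$. Because centralisers transport under $\theta$ to centralisers, each $\theta(T_i)$ is closed in $\theta(T)$, so $\theta(T_i)^\circ$ is a closed algebraic subgroup of $G^\circ$. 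The mere \emph{countability} of $\theta(T_i)/\theta(T_i)^\circ$ is then played off against the \emph{uncountability} of $T_{i+1}/T_i$ to force a strict jump $\dim\theta(T_{i+1})^\circ>\dim\theta(T_i)^\circ$ at every step, yielding $\dim\theta(T)^\circ\geq n$.
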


For the definition of the topology on $\Aut(X)$, the definition of the identity component $G^\circ$
of a subgroup $G \subseteq \Aut(X)$ and the definition of algebraic subgroups of $\Aut(X)$
we refer to section~\ref{sec.alg_subgroups}.

For the proof of Theorem~\ref{thm:MainThm} we first remark that
every torus $T\subseteq \Aut(X)$ of maximal dimension $n = \dim X$ is self-centralizing (Lemma~\ref{lem:centralizer_of_torus}). 
For any prime $p$
the torus $T$ contains a unique subgroup $\mu_p$ which is isomorphic to 
$(\ZZ / p \ZZ)^n$. In particular, $T \subseteq \Cent_{\Aut(X)}(\mu_p)$,
and thus the image of $T$ under an isomorphism $\theta \colon \Aut(X) \to \Aut(Y)$
is mapped to a subgroup of the centralizer of $\theta(\mu_p)$
inside $\Aut(Y)$. Our strategy is then to prove that the identity component
of the centralizer $\Cent_{\Aut(Y)}(\theta(\mu_p))$ is an algebraic group.
Our main result in this direction is the following generalization of
\cite[Proposition~3.4]{KrSt2013On-automorphisms-o}.

\begin{theorem}
	\label{thm:Centralizer}
	Let $X$ be a smooth, irreducible, quasi-pro\-jective variety of dimension $n$ with finite
	Picard group $\Pic(X)$. Assume that $X$ is endowed with a faithful $(\ZZ / p \ZZ)^n$-action
	for some prime $p$ which does not divide $\chi(X)$.
	Then  the centralizer $G := \Cent_{\Aut(X)}((\ZZ / p\ZZ)^n)$ 
	is a closed subgroup of $\Aut(X)$, and the identity component $G^\circ$ is a closed torus 
	of dimension $\leq n$.		
\end{theorem}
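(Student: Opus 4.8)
The plan is to analyze the action of $A := \Zpn$ through its fixed locus $X^A$ and then to linearise the centralizer $G$ at a fixed point. First I would show that $X^A$ is a non-empty finite set at each point of which the tangent representation of $A$ is \emph{faithful}. Indeed, if $x \in X^A$ and the subgroup $A_x \subseteq A$ acting trivially on $T_x X$ were non-trivial, then linearisability of the finite-group action of $A_x$ near the smooth fixed point $x$ would force $A_x$ to act trivially on a neighbourhood of $x$, hence on all of $X$ by irreducibility, contradicting faithfulness. Thus $A \hookrightarrow \GL(T_x X) \cong \GL_n$ is faithful, and since $A = \Zpn$ has rank $n$ while $\dim T_x X = n$, this representation is a sum of $n$ characters forming a basis of $\Hom(A, \Cst)$. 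In particular the trivial character does not occur, so $(T_x X)^A = 0 = T_x(X^A)$ and $x$ is isolated; hence $X^A$ is finite. Finally, since $A$ is a $p$-group and $p \nmid \chi(X)$, the Smith-theoretic congruence $\chi(X^A) \equiv \chi(X) \pmod p$ yields $|X^A| = \chi(X^A) \neq 0$, so $X^A \neq \emptyset$.

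The centralizer $G$ is closed because it is cut out by the closed conditions $g\sigma = \sigma g$ for $\sigma$ ranging over the finitely many elements of $A$, each being closed for the topology of Section~\ref{sec.alg_subgroups}. As $G$ commutes with $A$ it preserves the finite set $X^A$, so the identity component $G^\circ$ acts trivially on $X^A$; fix $x_0 \in X^A$. Differentiating at $x_0$ gives a homomorphism $\rho \colon G^\circ \to \GL(T_{x_0} X)$ whose image commutes with that of $A$. Since $A$ acts on $T_{x_0} X$ with $n$ pairwise distinct characters, its commutant is the maximal torus $D \cong (\Cst)^n$ of matrices diagonal in the corresponding eigenbasis, whence $\rho(G^\circ) \subseteq D$.

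It then remains to realise $G^\circ$ as a closed subtorus of $D$ of dimension $\leq n$. Choosing eigen-coordinates $t_1, \dots, t_n$ for $A$ at $x_0$, with $t_i$ of weight $\chi_i$, the fact that $\chi_1, \dots, \chi_n$ is a \emph{basis} of $\Hom(A,\Cst)$ forces every $A$-semiinvariant monomial of weight $\chi_i$ to be of the form $t_i \cdot (\text{polynomial in } t_1^p, \dots, t_n^p)$; consequently any $g \in G^\circ$ commuting with $A$ and fixing $x_0$ satisfies $g^* t_i = t_i\, u_i(t_1^p, \dots, t_n^p)$ formally, i.e.\ $g$ is \emph{multiplicative} in these coordinates. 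From this I would deduce that $\rho$ is injective on $G^\circ$: an element of $\ker \rho$ is of the above shape with $u_i(0) = 1$ and no linear correction, and such formally multiplicative automorphisms generate no non-trivial one-parameter algebraic subgroup commuting with $A$. Combining this injectivity with the reduction (using Section~\ref{sec.alg_subgroups}) of $G^\circ$ to the directed union of its connected algebraic subgroups $H \subseteq G$, each of which then injects into $D$ and is therefore a torus of dimension $\leq n$, shows that $G^\circ$ itself is a closed torus of dimension $\leq n$.

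I expect the decisive difficulty to lie in this last step, namely the algebraicity of $G^\circ$ together with the triviality of $\ker\rho$. A priori $G^\circ$ is only an ind-group, and ruling out non-trivial unipotent elements fixing $x_0$ with trivial derivative genuinely uses that the $n$ weights $\chi_i$ are independent, so that the admissible higher-order terms are all of multiplicative type. Controlling the behaviour of $G^\circ$ away from $x_0$ is where I would invoke the finiteness of $\Pic(X)$ together with quasi-projectivity, in order to produce a $G^\circ$-linearised projective embedding and thereby bound $G^\circ$. This is precisely the point at which the statement strengthens \cite[Proposition~3.4]{KrSt2013On-automorphisms-o}.
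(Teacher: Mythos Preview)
Your opening is fine and matches the paper: the existence of a fixed point $x_0$, faithfulness of the tangent representation of $A$ at $x_0$, finiteness of $X^A$, closedness of $G$, and the fact that $G^\circ$ fixes $x_0$ are all correct and are exactly what the paper establishes (via Proposition~\ref{prop:EulerChar}, Proposition~\ref{prop:Fogarty}, and \cite[Lemma~2.2]{KrSt2013On-automorphisms-o}).

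The gap is in your last two paragraphs. Your formal computation $g^*t_i = t_i\,u_i(t_1^p,\dots,t_n^p)$ is correct, but it does \emph{not} by itself show that $\ker\rho$ is trivial or even finite: an element of $\ker\rho$ acts unipotently on every jet space $\mm^k/\mm^{k+1}$, and there is no local obstruction to a nontrivial such automorphism existing on a general $X$. Your proposed remedy---a $G^\circ$-linearised projective embedding---does not make sense before you know $G^\circ$ is algebraic, so the argument is circular. This is precisely the point where the finiteness of $\Pic(X)$ enters, and it is used quite differently from what you suggest.

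The paper does not work with the tangent map $\rho$ at all. Instead, for well-chosen generators $\sigma_1,\dots,\sigma_n$ of $A$ it takes the irreducible component $Z_i$ of the \emph{hypersurface} $X^{\sigma_i}$ through $x_0$; each $Z_i$ is $G^\circ$-stable and $x_0$ is isolated in $\bigcap_i Z_i$. Finiteness of $\Pic(X)=\CH^1(X)$ is used to make a multiple of each $Z_i$ principal, producing global functions $f_i\in\OO(X)$ with $\VVV_X(f_i)=Z_i$. Lemma~\ref{lem:Semi-Invariants} then shows each $f_i$ is a $G^\circ$-semi-invariant, and the resulting characters assemble into a \emph{regular} homomorphism $\chi=(\chi_1,\dots,\chi_n)\colon G^\circ\to(\Cst)^n$. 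The crucial point (Lemma~\ref{lem:InvariantHypersurfaces}) is that $\ker\chi$ is finite because the morphism $f=(f_1,\dots,f_n)\colon X\to\AA^n$ has $x_0$ as an isolated point of $f^{-1}(0)$, so $\ker\chi$ embeds into the finite Galois group of $\CC(X)/\CC(\overline{f(X)})$. Regularity of $\chi$ with finite kernel gives $\dim G^\circ\le n$, whence $G^\circ$ is algebraic by Theorem~\ref{thm:CharacterizationAlgGrps}, and a connected algebraic group admitting a homomorphism to a torus with finite kernel is itself a torus. Thus the finiteness of $\Pic(X)$ is used to globalise the eigen-hyperplanes to principal divisors, not to produce an embedding.
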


For the proof of Theorem~\ref{thm:Centralizer} we use the fact that $p$
does not divide $\chi(X)$ to find a fixed point of the 
$(\ZZ / p \ZZ)^n$-action on $X$ (Proposition~\ref{prop:EulerChar}), and the smoothness of $X$ 
to show that the fixed point variety $X^{(\ZZ / p \ZZ)^n}$ is finite.

{\small
\subsection*{Acknowledgements} 
We would like to thank 
Alvaro Liendo for fruitful discussions.}

\par\medskip
\section{Preliminary results}

Throughout this note we work over the field $\CC$ of complex numbers.
A variety will be a reduced separated scheme of finite type over $\CC$.

\subsection{Quasi-affine varieties}
Let us recall some well-known results about quasi-affine varieties.
The first lemma is known for affine varieties and can be reduced to this case
by using open affine coverings.

\begin{lemma}
	\label{lem:RegFunctionsOnAProduct}
	Let $X$, $Y$ be varieties. Then the natural homomorphism
	\[
		\OO(X) \otimes_\CC \OO(Y) \to \OO(X \times Y)
	\] 
	is an isomorphism of $\CC$-algebras.
\end{lemma}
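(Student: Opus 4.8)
The plan is to prove injectivity and surjectivity separately: injectivity by an elementary linear-independence argument that works for arbitrary varieties, and surjectivity by reducing to the known affine case through finite affine open covers, exactly as the remark preceding the statement suggests. Explicitly, the map sends $f\otimes g$ to the function $(x,y)\mapsto f(x)g(y)$.

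For injectivity I would take an element of the kernel and write it as $\sum_{i=1}^m f_i\otimes g_i$ with $g_1,\dots,g_m\in\OO(Y)$ linearly independent over $\CC$. Evaluating the image at a fixed $x\in X$ gives $\sum_i f_i(x)\,g_i=0$ in $\OO(Y)$, so $f_i(x)=0$ for every $i$ by linear independence; since $x$ is arbitrary, all $f_i$ vanish. No affineness is needed here.

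For surjectivity, fix $h\in\OO(X\times Y)$ and write $h(\cdot,y)\in\OO(X)$ for the restriction of $h$ to $X\times\{y\}\cong X$. I would first reformulate membership in the image as a finiteness condition: $h$ lies in the image as soon as $W_h:=\operatorname{span}_\CC\{h(\cdot,y):y\in Y\}\subseteq\OO(X)$ is finite-dimensional. Indeed, given a basis $f_1,\dots,f_m$ of $W_h$, linear independence lets me pick points $x_1,\dots,x_m\in X$ with $\bigl(f_j(x_k)\bigr)_{j,k}$ invertible; since each $h(x_k,\cdot)$ is regular on $Y$, solving the resulting linear system expresses the coefficient functions $c_j$ in $h(\cdot,y)=\sum_j c_j(y)f_j$ as $\CC$-linear combinations of the $h(x_k,\cdot)$, whence $c_j\in\OO(Y)$ and $h=\sum_j f_j\otimes c_j$. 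It then remains only to prove that $W_h$ is finite-dimensional, and this is where the affine case enters. Using quasi-compactness I would choose finite affine open covers $X=\bigcup_{\alpha=1}^r U_\alpha$ and $Y=\bigcup_{\beta=1}^s V_\beta$, so that each $U_\alpha\times V_\beta$ is affine and, by the affine case, $h|_{U_\alpha\times V_\beta}\in\OO(U_\alpha)\otimes_\CC\OO(V_\beta)$. Writing this as a finite sum shows that for every $y\in V_\beta$ the restriction $h(\cdot,y)|_{U_\alpha}$ lies in one fixed finite-dimensional subspace $S_{\alpha\beta}\subseteq\OO(U_\alpha)$, independent of $y$. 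Through the injection $\OO(X)\hookrightarrow\prod_\alpha\OO(U_\alpha)$ this places $h(\cdot,y)$, for $y\in V_\beta$, in the finite-dimensional space $T_\beta:=\OO(X)\cap\prod_\alpha S_{\alpha\beta}$; since there are finitely many $\beta$, we get $W_h\subseteq\sum_\beta T_\beta$, which is finite-dimensional.

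I expect the main obstacle to be global rather than local: the affine case instantly yields a tensor decomposition on each chart $U_\alpha\times V_\beta$, but these must be assembled into a single global decomposition, and a naive gluing is awkward. The finite-dimensionality criterion for $W_h$ is what converts this assembly problem into the clean uniform statement ``$h(\cdot,y)|_{U_\alpha}\in S_{\alpha\beta}$ with $S_{\alpha\beta}$ independent of $y$''; securing that uniformity is the crucial step, after which the finiteness of the covers closes the argument.
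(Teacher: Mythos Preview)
Your proof is correct and carries out in full detail exactly the strategy the paper only hints at in one line (``known for affine varieties and can be reduced to this case by using open affine coverings''). The finite-dimensionality criterion for $W_h$ is a clean way to organize the gluing step, and every part of the argument is sound.
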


\begin{lemma}
	\label{lem:OpenImmersion}
	Let $X$ be a quasi-affine variety. Then the canonical morphism
	$\eta \colon X \to \Spec \OO(X)$ is a dominant open immersion 
	of schemes.
\end{lemma}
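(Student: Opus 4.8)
My plan is to exploit the definition of quasi-affineness directly. I would fix an open immersion $\iota \colon X \hookrightarrow Z$ into an affine variety $Z = \Spec B$ and set $A := \OO(X)$, with $\rho \colon B \to A$ the restriction homomorphism. By the universal property of the canonical morphism, $\iota$ factors as $X \xrightarrow{\eta} \Spec A \xrightarrow{\Spec \rho} Z$; since $\iota$ is a monomorphism, so is $\eta$, which in particular is injective on points. For dominance I would observe that $\eta$ induces the identity on global sections $\OO(X) \to \OO(X)$, whose kernel is zero, and a morphism to an affine scheme is dominant as soon as this kernel is nilpotent. The remaining and only substantial task is to prove that $\eta$ is an open immersion.

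The key step will be to produce a cover by principal opens on which $\eta$ is visibly an isomorphism. Since $X$ is a variety it is quasi-compact, so I can write the open subset $X \subseteq Z$ as a finite union $X = \bigcup_{i=1}^m Z_{g_i}$ of principal opens with $g_i \in B$ and $Z_{g_i} \subseteq X$. Writing $\bar g_i := \rho(g_i) \in A$, the inclusion $Z_{g_i} \subseteq X$ gives $X_{\bar g_i} = X \cap Z_{g_i} = Z_{g_i}$, which is affine with $\OO(X_{\bar g_i}) = B_{g_i}$; and because $\eta$ is the identity on global sections, $\eta^{-1}((\Spec A)_{\bar g_i}) = X_{\bar g_i}$. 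Thus $\eta$ restricts to a morphism of affine schemes $X_{\bar g_i} \to (\Spec A)_{\bar g_i} = \Spec A_{\bar g_i}$.

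The heart of the matter — and the step I expect to be the main obstacle — is to show that the natural map $A_{\bar g_i} \to B_{g_i} = \OO(X_{\bar g_i})$ is an isomorphism, for then each of the above restrictions is an isomorphism. Surjectivity should be straightforward: any element of $B_{g_i}$ is $b / g_i^k$ with $b \in B$, and $b$ restricts to a global function $\bar b \in \OO(X) = A$ with $b/g_i^k = \bar b / \bar g_i^k$. Injectivity is where the hypotheses must be used: if $a / \bar g_i^k$ maps to $0$, then $a$ vanishes on $X_{\bar g_i}$ while $\bar g_i$ vanishes on the complementary closed set, so the product $\bar g_i a$ vanishes at every point of $X$; as $X$ is reduced this forces $\bar g_i a = 0$ in $A$, whence $a / \bar g_i^k = 0$ in $A_{\bar g_i}$.

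Finally I would glue. Each restriction $X_{\bar g_i} \to (\Spec A)_{\bar g_i}$ being an isomorphism onto a principal open, and $X = \bigcup_i X_{\bar g_i}$ with $X_{\bar g_i} = \eta^{-1}((\Spec A)_{\bar g_i})$, the image $\eta(X)$ equals the open set $U := \bigcup_i (\Spec A)_{\bar g_i}$, and $\eta \colon X \to U$ is an isomorphism because it is an isomorphism over each member of an open cover of $U$ (being an isomorphism is local on the target). Hence $\eta$ is the composition of an isomorphism onto $U$ with the open inclusion $U \hookrightarrow \Spec A$, i.e.\ a dominant open immersion. The only genuinely non-formal point is the identification $\OO(X)_{\bar g_i} \cong \OO(X_{\bar g_i})$, which is precisely where reducedness and the embedding into an affine variety enter.
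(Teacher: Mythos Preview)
Your proposal is correct and follows essentially the same route as the paper: factor the given open immersion $X \hookrightarrow Z$ through $\eta$, cover $X$ by principal opens $Z_{g_i} \subseteq X$, and show that $\eta$ restricts to an isomorphism over each $(\Spec A)_{\bar g_i}$. The only difference is that you spell out the identification $\OO(X)_{\bar g_i} \cong \OO(X_{\bar g_i})$ explicitly (using reducedness for injectivity and the affine embedding for surjectivity), whereas the paper simply asserts it once $X_{\alpha^*(f)}$ is seen to be affine.
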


\begin{proof}
	Let $f \colon \Spec \OO(X) \to \AA^1$ be a morphism which vanishes
	on $\eta(X)$. Since $f$ can be understood as a regular function 
	$\tilde{f}$ on $X$, we get the following commutative diagram
	\[
		\xymatrix{
			& \AA^1 \\
			X \ar[ru]^-{\tilde{f}} \ar[r]_-{\eta} & \Spec \OO(X) \ar[u]_-{f}
		}
	\]
	which shows that $\tilde f=0$. 
	This implies that $\eta$ is dominant.
	
	We have an open immersion $\iota \colon X \to Y$ where $Y$ is affine, 
	and therefore a decomposition of $\iota$
	\[
		\xymatrix{
			\iota \colon X \ar[r]^-{\eta} & \Spec\OO(X) \ar[r]^-{\alpha} & Y \, .
		}
	\]
	In particular, $\eta$ is injective. 
	For any nonzero $f \in \OO(Y)$ such that $Y_{f}\subseteq\iota(X)$ we 
	see that $\iota$ induces an isomorphism $X_{\iota^{*}(f)} \simto Y_{f}$, 
	hence the composition
	\[
		\xymatrix{
			\iota' \colon X_{\alpha^{*}(f)} \ar[r]^-{\eta'} &  
			\Spec \OO(X)_{\alpha^{*}(f)} = \alpha^{-1}(Y_{f}) \ar[r]^-{\alpha'} 
			& Y_{f}
		}
	\]
	is an isomorphism where $\eta'$ and $\alpha'$ are the restrictions of 
	$\eta$ and $\alpha$ respectively.
	Since $\iota'$ is an isomorphism
	$X_{\alpha^\ast(f)}$ is affine and thus 
	$\eta'$ is an isomorphism, 
	because $\OO(X)_{\alpha^\ast(f)} = \OO(X_{\alpha^\ast(f)})$.
	Therefore, $\eta$ is a local isomorphism, hence an open immersion.
\end{proof}

\begin{lemma}
	\label{lem:ExtensionOfAction}
	Let $X$ be a quasi-affine variety and $Y$
	a variety. Then every morphism
	$Y \times X \to X$ extends uniquely to a morphism 
	$Y \times  \Spec \OO(X) \to \Spec \OO(X)$ via
	$X \to \Spec \OO(X)$. In particular, every regular
	action of an algebraic group on $X$ 
	extends to a regular action on $\Spec \OO(X)$.
\end{lemma}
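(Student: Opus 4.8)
The plan is to dualize everything to the affine target $\bar X := \Spec \OO(X)$, exploiting that its coordinate ring is precisely $\OO(X)$. Write $\eta \colon X \to \bar X$ for the canonical morphism; by construction the induced map $\eta^* \colon \OO(\bar X) = \OO(X) \to \OO(X)$ on global sections is the identity, and by Lemma~\ref{lem:OpenImmersion} the map $\eta$ is a dominant open immersion. The central observation is that, by Lemma~\ref{lem:RegFunctionsOnAProduct}, the two rings $\OO(Y \times X)$ and $\OO(Y \times \bar X)$ are both canonically identified with $\OO(Y) \otimes_\CC \OO(X)$, and under this identification the pullback $(\id_Y \times \eta)^* \colon \OO(Y \times \bar X) \to \OO(Y \times X)$ is $\id_{\OO(Y)} \otimes \eta^* = \id$, hence an isomorphism.

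For existence, I would start from a morphism $\mu \colon Y \times X \to X$ and compose with $\eta$ to obtain $\eta \circ \mu \colon Y \times X \to \bar X$. Since $\bar X$ is affine, this morphism is the same datum as the $\CC$-algebra homomorphism $\phi := (\eta \circ \mu)^* \colon \OO(X) \to \OO(Y \times X)$. Viewing its target as $\OO(Y \times \bar X)$ through the isomorphism above, $\phi$ defines --- again because $\bar X$ is affine --- a morphism $\tilde\mu \colon Y \times \bar X \to \bar X$. By construction the pullback of $\tilde\mu \circ (\id_Y \times \eta)$ equals $\phi = (\eta \circ \mu)^*$; as both are morphisms into the affine variety $\bar X$ they coincide, so $\tilde\mu$ restricts to $\mu$ along $\eta$, i.e. $\tilde\mu$ is an extension. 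Uniqueness follows from the same bookkeeping: any extension $\tilde\mu'$ is a morphism into the affine $\bar X$, hence determined by its pullback $(\tilde\mu')^* \colon \OO(X) \to \OO(Y \times \bar X)$, and composing with the isomorphism $(\id_Y \times \eta)^*$ forces $(\tilde\mu')^* = \phi$, so $\tilde\mu' = \tilde\mu$.

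For the final assertion about actions, I would apply the extension with $Y = G$ to an action $\mu \colon G \times X \to X$ of an algebraic group $G$, producing $\tilde\mu \colon G \times \bar X \to \bar X$. It then remains to check the unit and associativity axioms. The cleanest route is dual: the action $\mu$ equips $\OO(X)$ with a comodule structure map $\phi \colon \OO(X) \to \OO(G) \otimes_\CC \OO(X)$, and $\tilde\mu$ corresponds to this very same map $\phi$ on $\OO(\bar X) = \OO(X)$; the counit and coassociativity identities are algebraic identities involving only $\phi$, so they hold for $\tilde\mu$ precisely because they hold for $\mu$. Alternatively one may verify the axioms geometrically, since $\eta$ being a dominant open immersion makes $X$ dense in $\bar X$, and two morphisms into the separated scheme $\bar X$ that agree on a dense subset must agree.

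I expect the only point requiring genuine care to be the naturality of the identifications furnished by Lemma~\ref{lem:RegFunctionsOnAProduct}: one must check that the comparison isomorphism $\OO(Y \times Z) \simto \OO(Y) \otimes_\CC \OO(Z)$ is functorial in $Z$, so that $(\id_Y \times \eta)^*$ really becomes $\id_{\OO(Y)} \otimes \eta^*$ and hence the identity, rather than merely an abstract isomorphism. Once this compatibility is pinned down, both the extension and its uniqueness are formal consequences of the affineness of $\bar X$, and the action case reduces to the elementary remark that the defining diagrams are governed by the single comodule map $\phi$.
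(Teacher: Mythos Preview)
Your approach is essentially the paper's: dualize via Lemma~\ref{lem:RegFunctionsOnAProduct} and use that $\bar X=\Spec\OO(X)$ is affine to convert morphisms into $\CC$-algebra maps $\OO(X)\to\OO(Y)\otimes_\CC\OO(X)$. The only difference is that the paper first reduces to $Y$ affine, so that $\OO(Y\times\bar X)=\OO(Y)\otimes_\CC\OO(X)$ holds by definition of the product of affine schemes; you instead invoke Lemma~\ref{lem:RegFunctionsOnAProduct} directly for $Y\times\bar X$, which is technically outside its stated scope since $\bar X$ need not be of finite type, but the same affine-cover argument (or the reduction to $Y$ affine) closes this immediately.
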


\begin{proof}
	We can assume that $Y$ is affine.
	By Lemma~\ref{lem:RegFunctionsOnAProduct} 
	we have $\OO(Y \times X) = \OO(Y) \otimes_{\CC} \OO(X)$.
	Hence $Y \times X \to X$ induces
	a homomorphism of $\CC$-algebras 
	$\OO(X) \to \OO(Y) \otimes_{\CC} \OO(X)$
	which in turn gives the desired extension 
	$Y \times \Spec \OO(X) \to \Spec \OO(X)$.
\end{proof}

\subsection{Algebraic structure on the group of automorphisms}
\label{sec.alg_subgroups}
In this subsection, we recall some basic results
about the automorphism group $\Aut(X)$ of a
variety $X$. The survey \cite{Bl2016Algebraic-Structur}
and the article \cite{Ra1964A-note-on-automorp} will
serve as references.
Recall that a \emph{morphism} $\nu \colon A \to \Aut(X)$
is a map from a variety $A$ to $\Aut(X)$ such that the associated map
\[
	\tilde{\nu} \colon 
	A \times X \to X \, , \quad (a, x) \mapsto \nu(a)(x)
\] 
is a morphism of varieties. We get a topology on $\Aut(X)$ by
declaring a subset $F \subset \Aut(X)$ to be \emph{closed}, 
if the preimage $\nu^{-1}(F)$ under every morphism 
$\nu \colon A \to \Aut(X)$ is closed in $A$. Similarly, a \emph{morphism} 
$\nu = (\nu_1, \nu_2) \colon A \to \Aut(X) \times \Aut(X)$ is a map from
a variety $A$ into $\Aut(X) \times \Aut(X)$ such that $\nu_1$ and $\nu_2$ are morphisms. Thus we
get analogously as before a topology on $\Aut(X) \times \Aut(X)$. Note that for morphisms
$\nu, \nu_1, \nu_2 \colon A \to \Aut(X)$ the following maps are again morphisms
\begin{align*}
	A \to \Aut(X) \, , \ & a \mapsto \nu_1(a) \circ \nu_2(a) \\
	A \to \Aut(X) \, , \ & a \mapsto \nu(a)^{-1}
\end{align*}
and that $\nu^{-1}(\Delta)$ is closed in $A$ where $\Delta \subset \Aut(X) \times \Aut(X)$
denotes the diagonal. From these properties, one can deduce that $\Aut(X)$ behaves a bit 
like an algebraic group:

\begin{lemma}
	\label{lem:basic_properties_of_the_topology}
	For any variety $X$, the maps
	\begin{align*}
		\Aut(X) \times \Aut(X) \to \Aut(X) \, , \ & (\varphi_1, \varphi_2) \mapsto \varphi_1 \circ \varphi_2 \\
		\Aut(X) \to \Aut(X) \, , \ & \varphi \mapsto \varphi^{-1}
	\end{align*}
	are continuous and the diagonal $\Delta$ is closed in $\Aut(X) \times \Aut(X)$.
\end{lemma}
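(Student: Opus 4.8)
The plan is to deduce all three assertions directly from the three facts recorded just before the statement: that $a \mapsto \nu_1(a) \circ \nu_2(a)$ and $a \mapsto \nu(a)^{-1}$ are again morphisms whenever $\nu, \nu_1, \nu_2 \colon A \to \Aut(X)$ are morphisms, and that $\nu^{-1}(\Delta)$ is closed for every morphism $\nu \colon A \to \Aut(X) \times \Aut(X)$. No further geometric input is needed; the lemma is a purely formal consequence of the ``final topology'' style definition of closedness on $\Aut(X)$ and on $\Aut(X) \times \Aut(X)$.

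I would start with the diagonal, since this is immediate. By definition, a subset of $\Aut(X) \times \Aut(X)$ is closed exactly when its preimage under every morphism $\nu \colon A \to \Aut(X) \times \Aut(X)$ is closed in $A$. For the subset $\Delta$ this is precisely the third recorded fact, so $\Delta$ is closed and nothing more is required.

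Next I would treat composition. Write $\mu \colon \Aut(X) \times \Aut(X) \to \Aut(X)$ for $(\varphi_1, \varphi_2) \mapsto \varphi_1 \circ \varphi_2$. To show that $\mu$ is continuous it suffices to show that $\mu^{-1}(F)$ is closed for every closed $F \subseteq \Aut(X)$, and by the definition of the topology on the product this amounts to checking that $(\mu \circ \nu)^{-1}(F) = \nu^{-1}(\mu^{-1}(F))$ is closed in $A$ for every morphism $\nu = (\nu_1, \nu_2) \colon A \to \Aut(X) \times \Aut(X)$. But $(\mu \circ \nu)(a) = \nu_1(a) \circ \nu_2(a)$, which is a morphism $A \to \Aut(X)$ by the first recorded fact; hence the preimage under it of the closed set $F$ is closed by the very definition of closedness on $\Aut(X)$. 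Continuity of inversion $\varphi \mapsto \varphi^{-1}$ is handled identically, using the second recorded fact in place of the first.

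I do not anticipate any genuine obstacle here. The only point requiring care is the two-layered nature of the definitions: closedness in the product is tested against arbitrary morphisms into the product, and continuity of a map is in turn reduced to such closedness tests. Once this bookkeeping is set up correctly, each statement follows simply by composing the given morphism $\nu$ with $\mu$ (respectively with inversion) and invoking the appropriate morphism-stability fact.
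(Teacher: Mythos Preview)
Your proposal is correct and is exactly the argument the paper has in mind: the paper records the three morphism-stability facts and then states the lemma without proof, writing only that ``from these properties, one can deduce'' it. You have simply supplied the routine final-topology bookkeeping that the authors chose to omit.
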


\begin{example}
	For any set $S \subseteq \Aut(X)$ the centralizer $\Cent(S)$
	is a closed subgroup of $\Aut(X)$. This is a 
	consequence of Lemma~\ref{lem:basic_properties_of_the_topology}.
\end{example}

For a subset $S \subseteq \Aut(X)$ its \emph{dimension} is defined by
\[
	\dim S := \sup\left\{d \ \Big| \  	\begin{array}{l}
		\textrm{there exists a variety $A$ of dimension $d$ and an in-} \\
	 	\textrm{jective morphism $\nu\colon A \to \Aut(X)$ with image in $S$}
	\end{array}\right\}.
\]
The following lemma generalizes the classical dimension estimate to morphisms $A \to \Aut(X)$.

\begin{lemma}
	\label{lem:dim_estimate}
	If $\nu \colon A \to \Aut(X)$ is a morphism, then $\dim \nu(A)Ê\leq \dim A$.
\end{lemma}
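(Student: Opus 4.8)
The plan is to unwind the definition of $\dim \nu(A)$ and to reduce the statement to the classical fibre-dimension theorem for morphisms of varieties. By the definition of the dimension of a subset of $\Aut(X)$, it suffices to prove that for every variety $B$ and every injective morphism $\mu \colon B \to \Aut(X)$ with image $\mu(B) \subseteq \nu(A)$ one has $\dim B \leq \dim A$; the asserted inequality $\dim \nu(A) \leq \dim A$ then follows by taking the supremum over all such $\mu$.

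In order to compare $A$ and $B$ by honest algebraic geometry, I would introduce the incidence set
\[
	Z := \{ (a,b) \in A \times B \mid \nu(a) = \mu(b) \} \subseteq A \times B
\]
and first check that it is closed, hence a subvariety. The two maps $A \times B \to \Aut(X)$ given by $(a,b) \mapsto \nu(a)$ and $(a,b) \mapsto \mu(b)$ are morphisms in the sense of Section~\ref{sec.alg_subgroups}, since their associated maps to $X$ factor as $(A \times B) \times X \to A \times X \xrightarrow{\tilde{\nu}} X$ and $(A \times B) \times X \to B \times X \xrightarrow{\tilde{\mu}} X$ through the projections. Consequently $(a,b) \mapsto (\nu(a), \mu(b))$ is a morphism into $\Aut(X) \times \Aut(X)$, and $Z$ is the preimage of the diagonal $\Delta$, which is closed by Lemma~\ref{lem:basic_properties_of_the_topology}.

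Next I would analyse the two projections restricted to $Z$. The projection $\pr_B \colon Z \to B$ is surjective: for every $b \in B$ the hypothesis $\mu(B) \subseteq \nu(A)$ furnishes an $a \in A$ with $\nu(a) = \mu(b)$, so $(a,b) \in Z$; hence $\dim Z \geq \dim B$. The projection $\pr_A \colon Z \to A$, on the other hand, has finite fibres, since over a fixed $a \in A$ the fibre consists of those $b \in B$ with $\mu(b) = \nu(a)$, of which there is at most one by the injectivity of $\mu$. By the classical dimension estimate a morphism of varieties with finite fibres cannot raise dimension, so $\dim Z \leq \dim A$. Combining the two bounds gives $\dim B \leq \dim Z \leq \dim A$, which is what we wanted.

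The one delicate point is the passage carried out in the second paragraph, where the abstract topology on $\Aut(X)$ must be linked to genuine algebraic geometry: everything hinges on $Z$ being an honest closed subvariety of $A \times B$, so that the fibre-dimension theorem is applicable. Once this is in place, the argument is the familiar interplay between surjectivity, which forces the image to have dimension at most that of the source, and finiteness of fibres, which forces the source to have dimension at most that of the target.
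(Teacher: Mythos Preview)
Your proof is correct, but it follows a genuinely different route from the paper's. Both arguments begin with the same reduction: it suffices to show $\dim B \leq \dim A$ for any injective morphism $\mu \colon B \to \Aut(X)$ with $\mu(B) \subseteq \nu(A)$. From there the paper invokes Ramanujam's separating-points lemma: because $\mu$ is injective, there exist finitely many points $x_1,\ldots,x_k \in X$ such that the evaluation map $\mu' \colon B \to X^k$, $b \mapsto (\mu(b)(x_1),\ldots,\mu(b)(x_k))$, is injective. Defining $\nu' \colon A \to X^k$ analogously, one has $\mu'(B) \subseteq \nu'(A)$ inside the honest variety $X^k$, and the conclusion follows from $\dim B = \dim \overline{\mu'(B)} \leq \dim \overline{\nu'(A)} \leq \dim A$.

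Your approach instead forms the incidence set $Z = \{(a,b) \mid \nu(a)=\mu(b)\}$, uses the closedness of the diagonal in $\Aut(X)\times\Aut(X)$ to see that $Z$ is a closed subvariety of $A\times B$, and then plays the surjectivity of $\pr_B$ against the injectivity of $\pr_A$ via the classical fibre-dimension theorem. This avoids the external citation to Ramanujam and keeps everything internal to the formalism set up in Section~\ref{sec.alg_subgroups}; the price is that one must check carefully, as you do, that the pair $(\nu\circ\pr_A,\mu\circ\pr_B)$ really is a morphism into $\Aut(X)\times\Aut(X)$ in the paper's sense. The paper's method, on the other hand, has the virtue of replacing $\Aut(X)$ by a concrete finite-dimensional target $X^k$ immediately, so that no further appeal to the abstract topology on $\Aut(X)$ is needed.
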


\begin{proof}
	Let $\eta \colon B \to \Aut(X)$ be an injective morphism such that $\eta(B) \subseteq \nu(A)$.
	The statement follows if we prove $\dim B \leq \dim A$.
	Since $\eta$ is injective, there exist points $x_1, \ldots, x_k \in X$ such that the map
	\[
		\eta' \colon B \to \underbrace{X \times \cdots \times X}_{\textrm{$n$-times}} \, , \quad 
		b \mapsto (\eta(b)(x_1), \ldots, \eta(b)(x_k))
	\]
	is injective, see e.g. \cite[Lemma~1]{Ra1964A-note-on-automorp}. Let
	\[
		\nu' \colon A \to \underbrace{X \times \cdots \times X}_{\textrm{$n$-times}} \, , \quad 
		a \mapsto (\nu(a)(x_1), \ldots, \nu(a)(x_k)) \, .
	\]
	Since $\eta(B) \subseteq \nu(A)$, we get $\eta'(B) \subseteq \nu'(A)$ and thus
	$\dim B = \dim \overline{\eta'(B)} \leq \dim \overline{\nu'(A)} \leq \dim A$.
\end{proof}

For a subgroup $G \subseteq \Aut(X)$, the \emph{identity component} $G^\circ \subseteq G$ is defined by
\[
	G^\circ = \left\{ \, g \in G \ \Big| \ 
	\begin{array}{l}
		\textrm{there exists an irreducible variety $A$ and a morphism} \\
	 	\textrm{$\nu\colon A \to \Aut(X)$ with image in $G$ 
		such that $g,e\in\nu(A)$}
	\end{array}
	\right\} \, .
\]
We call a subgroup $G \subseteq \Aut(X)$ \emph{connected} if $G = G^\circ$.
In the next proposition, we list several properties of the identity component of a subgroup of
$\Aut(X)$.

\begin{proposition}
	\label{prop:Id_component}
	Let $X$ be a variety and let $G \subseteq \Aut(X)$ be a subgroup. 
	Then the following holds:
	\begin{enumerate}[label=$(\arabic*)$]
		\item \label{prop:Id_component_1} $G^\circ$ is a normal subgroup of $G$;
		\item \label{prop:Id_component_2} The cosets of $G^\circ$ in $G$ are
			the equivalence classes under:
			\[
				g_1 \sim g_2  \ :\iff \ \left\{
				\begin{array}{l}
					\textrm{there exists an irreducible variety $A$} \\
					\textrm{and a morphism $\nu \colon A \to \Aut(X)$} \\
					\textrm{with image in $G$ such that $g_1, g_2 \in \nu(A)$} \, ; 
				\end{array}
				\right.
			\]
		\item \label{prop:Id_component_3} For each morphism $\nu \colon A \to \Aut(X)$ with
								      image in $G$, 
								      the preimage $\nu^{-1}(G^\circ)$
								      is closed in $A$. In particular, if
								      $G$ is closed in $\Aut(X)$, then
								      $G^\circ$ is closed in $\Aut(X)$;
		\item \label{prop:Id_component_4} If $X$ is quasi-projective and $G$ is closed in $\Aut(X)$, 
								      then the index of $G^\circ$ in $G$ is countable.
	\end{enumerate}
\end{proposition}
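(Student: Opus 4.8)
The plan is to treat the relation $\sim$ from \ref{prop:Id_component_2} as the organizing principle: I would show it is an equivalence relation whose class of the identity $e$ is exactly $G^\circ$, and then read off \ref{prop:Id_component_1} and \ref{prop:Id_component_2} at once. Reflexivity is witnessed by a point and symmetry is built into the definition, so the content is transitivity. Given witnesses $\nu\colon A\to\Aut(X)$ for $g_1\sim g_2$ and $\mu\colon B\to\Aut(X)$ for $g_2\sim g_3$ (with $A,B$ irreducible and images in $G$), I would form the morphism $A\times B\to\Aut(X)$, $(a,b)\mapsto\nu(a)\circ g_2^{-1}\circ\mu(b)$; since a product of irreducible varieties is irreducible and composition, inversion and constant maps produce morphisms (as recorded before Lemma~\ref{lem:basic_properties_of_the_topology}), this has image in $G$ and connects $g_1$ to $g_3$. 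The same translation trick gives the crucial identity $g_1\sim g_2\iff g_1g_2^{-1}\in G^\circ$: from a witness for $g_1\sim g_2$ the morphism $a\mapsto\nu(a)\circ g_2^{-1}$ shows $g_1g_2^{-1}\sim e$, and conversely one multiplies by $g_2$. This simultaneously yields closure of $G^\circ$ under products and inverses, its normality in $G$ (conjugate a witness by $h\in G$ via $a\mapsto h\circ\nu(a)\circ h^{-1}$), and the fact that the $\sim$-classes are precisely the cosets $gG^\circ=G^\circ g$, which is \ref{prop:Id_component_1} and \ref{prop:Id_component_2}.

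For \ref{prop:Id_component_3}, the key observation is that if $A$ is irreducible and $\nu\colon A\to\Aut(X)$ has image in $G$, then all values $\nu(a)$ lie in a single $\sim$-class, hence in one coset of $G^\circ$. Thus for a general morphism $\nu\colon A\to\Aut(X)$ with image in $G$, decomposing $A$ into its finitely many irreducible components $A_1,\dots,A_m$, each intersection $\nu^{-1}(G^\circ)\cap A_i$ is either $A_i$ or empty, so $\nu^{-1}(G^\circ)$ is the union of those $A_i$ with $\nu(A_i)\subseteq G^\circ$ and is therefore closed. For the ``in particular'' part I would take an arbitrary morphism $\nu\colon A\to\Aut(X)$, note that $\nu^{-1}(G)$ is closed (as $G$ is closed), restrict $\nu$ to this closed subvariety and apply the previous case; since $G^\circ\subseteq G$ the two preimages of $G^\circ$ coincide, so $G^\circ$ is closed.

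The substance is \ref{prop:Id_component_4}, and this is where quasi-projectivity enters. I would first reduce to the covering claim that $\Aut(X)$ is a union of countably many images $\nu_i(A_i)$ of morphisms $\nu_i\colon A_i\to\Aut(X)$ from irreducible varieties. Granting this, $\nu_i^{-1}(G)$ is closed, its finitely many irreducible components map into single cosets by the observation above, and these countably many pieces cover $G$; so only countably many cosets occur. To establish the covering claim I would fix a projective closure $\bar X\subseteq\PP^N$ of $X$ and pass to the Hilbert scheme $H=\mathrm{Hilb}(\bar X\times\bar X)$, a countable disjoint union of projective schemes $H_P$ indexed by Hilbert polynomials $P$. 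Sending $\varphi\in\Aut(X)$ to the class of the closure $\overline{\Gamma_\varphi}\subseteq\bar X\times\bar X$ of its graph lands in $\bigsqcup_P H_P$; the locus $U_P\subseteq H_P$ of subschemes whose intersection with $X\times X$ is the graph of an automorphism of $X$ should be locally closed, the universal family restricting over $U_P$ to a morphism $\nu_P\colon U_P\to\Aut(X)$, and every automorphism lies in some $\nu_P(U_P)$ because $\overline{\Gamma_\varphi}\cap(X\times X)=\Gamma_\varphi$. Passing to the finitely many irreducible components of each reduced $U_P$ then produces the desired countable family.

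I expect the main obstacle to be precisely this last step: checking that $U_P$ is locally closed and that the universal family induces a genuine morphism $U_P\to\Aut(X)$ in the sense of this section, i.e.\ that the evaluation $U_P\times X\to X$ is regular. These are the standard constructions underlying the (ind-)algebraic structure on automorphism groups of quasi-projective varieties, for which I would rely on \cite{Ra1964A-note-on-automorp} and the survey \cite{Bl2016Algebraic-Structur}; everything else is formal manipulation with the definitions of a morphism to $\Aut(X)$ and of the topology.
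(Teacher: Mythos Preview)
Your proposal is correct and follows essentially the same route as the paper: the same transitivity witness $(a,b)\mapsto\nu(a)\circ g_2^{-1}\circ\mu(b)$ for \ref{prop:Id_component_2}, the same irreducible-component argument for \ref{prop:Id_component_3} (the paper decomposes $\overline{\nu^{-1}(G^\circ)}$ rather than $A$, but the mechanism is identical), and the same Hilbert-scheme covering for \ref{prop:Id_component_4}. You have also correctly located the only nontrivial technical point, namely that the isomorphism locus $U_P$ is constructible and that the universal family over its locally closed strata yields genuine morphisms to $\Aut(X)$; the paper handles this via \cite[Proposition~9.6.1]{Gr1966Elements-de-geomet-IV} and \cite[Proposition~5.7]{1971Revetements-etales}.
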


\begin{proof}
	\ref{prop:Id_component_1}: 
	One can directly see, that $G^\circ$ is a normal subgroup of $G$.

	\ref{prop:Id_component_2}: We first show that $``\sim"$
	defines an equivalence relation on $G$. Reflexivity and symmetry are obvious. 
	For proving the transitivity, let $g \sim h$ and $h \sim k$. 
	By definition there exist irreducible varieties $A$, $B$,
	morphisms $\nu \colon A \to \Aut(X)$, $\eta \colon B \to \Aut(X)$ with image 
	in $G$ and $a_1, a_2 \in A$, $b_1, b_2 \in B$ such that $\nu(a_1) = g$, 
	$\nu(a_2) = h$, $\eta(b_1) = h$, $\eta(b_2) = k$. Then 
	\[
		A \times B \to \Aut(X) \, , \quad (a, b) \mapsto \nu(a) \circ h^{-1} \circ \eta(b)
	\]
	is a morphism with image in $G$ that maps $(a_1, b_1)$ onto 
	$g$ and $(a_2, b_2)$ onto $k$.
	Thus $g \sim k$, which proves the transitivity. In particular, 
	$G^\circ$ is the equivalence class
	with respect to $\sim$ which contains the neutral element $e$. This implies
	the statement
	
	\ref{prop:Id_component_3}: Let
	\[
		\bigcup_{i=1}^k B_i = \overline{\nu^{-1}(G^\circ)} \subseteq A
	\]
	be the decomposition of the closure of $\nu^{-1}(G^\circ)$ into 
	irreducible components $B_1, \ldots, B_k$.
	Thus $B_i \cap \nu^{-1}(G^\circ)$ is non-empty. Since $\nu$ has image in $G$ it follows from  
	the transitivity of $``\sim"$ that $\nu(B_i) \subseteq G^\circ$. Thus 
	$B_i \subseteq \nu^{-1}(G^\circ)$ for all $i$. 
	Hence $\nu^{-1}(G^\circ)$ is closed in $A$.
	
	\ref{prop:Id_component_4}: 
	Let $\nu\colon A \to \Aut(X)$ be a morphism. Since $\nu^{-1}(G) \subseteq A$ is closed, 
	it has only finitely many irreducible components. This implies that its image 
	$\nu(A)$ meets only finitely many cosets of $G^{\circ}$ in $G$. 
	The claim follows if we show that there exist countably many morphisms of varieties into $\Aut(X)$ 
	whose images cover $\Aut(X)$. We will show this.

	Since $X$ is quasi-projective,
	there exists a projective variety $\overline{X}$ and an open embedding 
	$X \subseteq \overline{X}$.
	For each polynomial $p \in \QQ[x]$ we denote by 
	$\textrm{Hilb}^p$
	the Hilbert scheme of $\overline{X} \times \overline{X}$ 
	associated to the Hilbert polynomial $p$, and denote by 
	$\mathcal{U}^p \subseteq \textrm{Hilb}^p \times \overline{X} \times \overline{X}$
	the universal family, which is by definition flat over $\textrm{Hilb}^p$. 
	By \cite[Theorem~3.2]{Grothendieck:1995aa}, $\textrm{Hilb}^p$
	is a projective scheme over $\CC$. For $i=1, 2$ 
	consider the following morphisms
	\[
		q_i \colon (\textrm{Hilb}^p \times X \times X) \cap \mathcal{U}^p \to 
		\textrm{Hilb}^p \times X \, , \quad (h, x_1, x_2) \mapsto (h, x_i)
	\]
	which are defined over $\textrm{Hilb}^p$. 
	By \cite[Proposition~9.6.1]{Gr1966Elements-de-geomet-IV}, 
	the points $h \in H$ where the restriction
	\[
		q_i |_{\{h\}} \colon (\{ h \} \times X \times X) \cap 
		\mathcal{U}^p \to \{ h \} \times X
	\]
	is an isomorphism, form a constructible subset $S^p$ of $\textrm{Hilb}^p$. Now choose locally closed
	subsets $S_j^p$, $j=1, \ldots, k_p$ of $\textrm{Hilb}^p$ that cover $S^p$. 
	We equip each $S_j^p$ with the underlying 
	reduced scheme structure of $\textrm{Hilb}^p$. Note that 
	$(\textrm{Hilb}^p \times X \times X) \cap \mathcal{U}^p$ and 
	$\textrm{Hilb}^p \times X$ are both flat over $\textrm{Hilb}^p$.
	Therefore, we can apply \cite[Proposition 5.7]{1971Revetements-etales} 
	and we get that $q_i$ restricts to an isomorphism
	over $S_j^p$. Thus for each $j$ we get a morphism of varieties
	\[
		\xymatrix{
			S_j^p \times X \ar[rr]^-{(q_1 |_{S_j^p})^{-1}} &&
			(S_j^p \times X \times X) \cap \mathcal{U}^p \ar[rr]^-{q_2 |_{S_j^p}} &&
		 	S_j^p \times X \ar[r] & X
		}
	\]
	which defines a morphism $S_j^p \to \Aut(X)$. For each automorphism
	$\varphi$ in $\Aut(X)$, the closure in $\overline{X} \times \overline{X}$ 
	of the graph 
	$\Gamma_{\varphi} \subseteq X \times X$ defines a (closed) point in the 
	Hilbert scheme 
	$\textrm{Hilb}^p$ for a certain rational polynomial $p$,
	which belongs to $S^p$. Thus the images of the morphisms 
	$S_j^p \to \Aut(X)$ cover $\Aut(X)$. Since there are only countably 
	many rational polynomials, the claim follows.
%
\end{proof}

We say that $G$ is an \emph{algebraic subgroup} of $\Aut(X)$
if there exists a morphism $\nu \colon H \to \Aut(X)$ of an algebraic group $H$ onto $G$
which is a homomorphism of groups.

The next result gives a criterion for a subgroup of $\Aut(X)$
to be algebraic. The main argument is due to Ramanujam \cite{Ra1964A-note-on-automorp}.

\begin{theorem}
	\label{thm:CharacterizationAlgGrps}
	Let $X$ be an irreducible variety and let $G \subseteq \Aut(X)$
	be a subgroup. Then the following statements are equivalent:
	\begin{enumerate}[label = $(\arabic*)$]
		\item \label{thm:Characterization_G-alg} $G$ is an algebraic subgroup of $\Aut(X)$;
		\item \label{thm:Characterization_Constructible} there exists a 
			morphism of a variety into $\Aut(X)$ with image $G$;
		\item \label{thm:Characterization_dimG_and_index_finite} 
			$\dim G$ is finite and $G^\circ$ has finite index in $G$;
		\item \label{thm:Characterization_G-alg_Unique_alg_str}
			there is a unique structure of an algebraic group on $G$
			such that for each irreducible variety $A$ we have a bijective correspondence
			\[
				\left\{ 
					\begin{array}{c}
						\textrm{morphisms of} \\ 
						\textrm{varieties $A \to G$} 
					\end{array}
				\right\}
				\stackrel{1:1}{\longleftrightarrow} 
				\left\{ 
					\begin{array}{c}
						\textrm{morphisms $A \to \Aut(X)$} \\
						\textrm{with image in $G$}
					\end{array}
				\right\}
			\]
			given by $f \mapsto \iota \circ f$
			where $\iota \colon G \to \Aut(X)$ denotes the canonical inclusion.
	\end{enumerate}
\end{theorem}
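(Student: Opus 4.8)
The plan is to prove the cycle $\ref{thm:Characterization_G-alg}\Rightarrow\ref{thm:Characterization_Constructible}\Rightarrow\ref{thm:Characterization_dimG_and_index_finite}\Rightarrow\ref{thm:Characterization_G-alg_Unique_alg_str}\Rightarrow\ref{thm:Characterization_G-alg}$, with all the substance concentrated in the third implication. The implication $\ref{thm:Characterization_G-alg}\Rightarrow\ref{thm:Characterization_Constructible}$ is immediate, since a surjective homomorphism $\nu\colon H\to G$ from an algebraic group is in particular a morphism of the variety $H$ into $\Aut(X)$ with image $G$. For $\ref{thm:Characterization_Constructible}\Rightarrow\ref{thm:Characterization_dimG_and_index_finite}$, choose a morphism $\nu\colon A\to\Aut(X)$ with $\nu(A)=G$. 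Then $\dim G\leq\dim A<\infty$ by Lemma~\ref{lem:dim_estimate}. Writing $A=A_1\cup\cdots\cup A_m$ as a union of its irreducible components, the restriction $\nu|_{A_i}$ witnesses, through Proposition~\ref{prop:Id_component}\ref{prop:Id_component_2}, that all elements of $\nu(A_i)$ are equivalent, so $\nu(A_i)$ lies in a single coset of $G^\circ$; hence $G$ meets at most $m$ cosets and $[G:G^\circ]\leq m<\infty$. Finally, for $\ref{thm:Characterization_G-alg_Unique_alg_str}\Rightarrow\ref{thm:Characterization_G-alg}$, equip $G$ with the algebraic group structure provided by \ref{thm:Characterization_G-alg_Unique_alg_str}; applying the correspondence to the identity map $\mathrm{id}_G$ shows that the inclusion $\iota\colon G\to\Aut(X)$ is a morphism, and as it is a group homomorphism onto $G$ this is exactly \ref{thm:Characterization_G-alg}.

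For the main step $\ref{thm:Characterization_dimG_and_index_finite}\Rightarrow\ref{thm:Characterization_G-alg_Unique_alg_str}$, set $d:=\dim G<\infty$; since $[G:G^\circ]<\infty$ we have $\dim G^\circ=d$, and I would first build the structure on the connected group $G^\circ$. By definition of $\dim$ there is an irreducible variety $A$ and a morphism $\nu\colon A\to\Aut(X)$ with image in $G^\circ$ and $\dim\overline{\nu(A)}$ maximal, and this maximum equals $d$; after composing with a translation I may assume $e\in\nu(A)$. The key claim is that $\nu(A)$ generates $G^\circ$. To see the stabilization, consider the multiplication morphisms $(a_1,\ldots,a_r)\mapsto\nu(a_1)^{\pm1}\cdots\nu(a_r)^{\pm1}$ from the irreducible varieties $A^{\times r}$ into $\Aut(X)$; their images form an increasing chain of subsets of $G^\circ$, all containing $e$, whose closure-dimensions are nondecreasing and bounded by $d$, hence stabilize. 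A Chevalley-type argument then identifies the union with a subgroup $H\subseteq G^\circ$ realized as the image of a single morphism from an irreducible variety, and a dimension comparison against $G^\circ$ (each element of $G^\circ$ lies in $\nu'(C)$ for some irreducible $C$ containing $e$, and multiplying by $H$ cannot raise the dimension past $d$) forces $H=G^\circ$.

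The hard part will be making this generation-and-closure argument rigorous in an ambient $\Aut(X)$ that is not itself a variety: the classical facts that a constructible subgroup is closed, that products of finitely many images of irreducible varieties stabilize to an algebraic subgroup, and that a full-dimensional subgroup of a connected group is everything, all live in the category of varieties. Here I would follow Ramanujam's device: fix finitely many points $x_1,\ldots,x_k\in X$ (as in the proof of Lemma~\ref{lem:dim_estimate}) so that the evaluation $a\mapsto(\nu(a)(x_1),\ldots,\nu(a)(x_k))$ separates the relevant elements, thereby modelling all the images above as genuine constructible subsets of the honest variety $X^{k}$. Running the classical generation and closure theorems inside $X^{k}$ and transporting the resulting group law back to $G^\circ$ is the technical core of the proof. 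The two points that must be verified carefully are that multiplication and inversion on the constructed variety underlying $G^\circ$ are morphisms, and that the structure is independent of all choices of $A$, $\nu$, and the points $x_i$; this independence is precisely what delivers both the uniqueness and the bijective correspondence asserted in \ref{thm:Characterization_G-alg_Unique_alg_str} (two structures satisfying the correspondence make $\mathrm{id}$ a morphism in both directions, hence an isomorphism of algebraic groups).

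It remains to pass from $G^\circ$ to $G$ using $[G:G^\circ]=m<\infty$. Writing $G=\bigsqcup_{i=1}^{m}g_iG^\circ$, each inner automorphism $h\mapsto g_i\,h\,g_i^{-1}$ of $G^\circ$ is induced by a morphism and hence is an automorphism of the algebraic group $G^\circ$ just constructed; gluing $m$ copies of the variety $G^\circ$ along the translation and conjugation data endows $G$ with an algebraic group structure inducing the given abstract group law, and the correspondence for irreducible $A$ (which lands in a single coset) reduces to the one already established for $G^\circ$. This completes $\ref{thm:Characterization_dimG_and_index_finite}\Rightarrow\ref{thm:Characterization_G-alg_Unique_alg_str}$ and hence the equivalence of all four conditions.
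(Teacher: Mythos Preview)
Your proof follows exactly the same cycle of implications as the paper, and the easy steps $\ref{thm:Characterization_G-alg}\Rightarrow\ref{thm:Characterization_Constructible}$, $\ref{thm:Characterization_Constructible}\Rightarrow\ref{thm:Characterization_dimG_and_index_finite}$, $\ref{thm:Characterization_G-alg_Unique_alg_str}\Rightarrow\ref{thm:Characterization_G-alg}$ are handled identically. For the substantive implication $\ref{thm:Characterization_dimG_and_index_finite}\Rightarrow\ref{thm:Characterization_G-alg_Unique_alg_str}$ the paper simply invokes Ramanujam's theorem \cite[Theorem~p.26]{Ra1964A-note-on-automorp} for the connected case and then extends to $G$ via finite index exactly as you do; your outline of the evaluation-at-points trick and the generation/stabilization argument is precisely what Ramanujam proves, so rather than reproducing it you may just cite that reference.
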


\begin{proof}
 	The implication \ref{thm:Characterization_G-alg} $\Rightarrow$ \ref{thm:Characterization_Constructible}
	is obvious.
	
	Assume that \ref{thm:Characterization_Constructible} holds, i.e. there 
	is a morphism $\eta \colon A \to \Aut(X)$ with image equal to $G$.	
	By Lemma~\ref{lem:dim_estimate} we get $\dim G \leq \dim A$ 
	and hence $\dim G$ is finite. Since $A$ is a variety and thus has only 
	finitely many irreducible
	components, it follows from Proposition~\ref{prop:Id_component}~\ref{prop:Id_component_2} 
	that $G^\circ$ has finite index in $G$. This proves
	\ref{thm:Characterization_Constructible} 
	$\Rightarrow$ \ref{thm:Characterization_dimG_and_index_finite}.
	
	The implication \ref{thm:Characterization_dimG_and_index_finite} $\Rightarrow$
	\ref{thm:Characterization_G-alg_Unique_alg_str} is done in 
	\cite[Theorem~p.26]{Ra1964A-note-on-automorp} in case $G = G^\circ$
	for irreducible $A$. Thus in the general
	case, $G^\circ$ carries the structure of an algebraic group with the required property. Since $G^\circ$
	has finite index in $G$ we obtain a unique structure of an algebraic group on $G$ extending the 
	given structure on $G^\circ$. It remains to see that the required property holds for $G$.
	Note that the canonical inclusion $\iota \colon G \to \Aut(X)$ is a morphism and thus each morphism
	of varieties $A \to G$ yields a morphism $A \to \Aut(X)$ by composing with $\iota$.
	For the reverse, let $\nu \colon A \to \Aut(X)$ be a morphism with image in $G$. Since
	$A$ is irreducible, by Proposition~\ref{prop:Id_component}~\ref{prop:Id_component_2} 
	there is $g \in G$ such that the image of $\nu$ lies in $g G^\circ$.
	Note that the composition of $\nu$ with 
	$\theta_{g^{-1}} \colon \Aut(X) \to \Aut(X)$, $\varphi \mapsto g^{-1} \circ \varphi$ is a morphism
	with image in $G^\circ$. Thus $\theta_{g^{-1}} \circ \nu$
	corresponds to a morphism $A \to G^\circ$ of varieties.
	Using that $G \to G$, $h \mapsto g h$ is an isomorphism of varieties, 
	we get that $\nu$
	corresponds to a morphism $A \to G$ of varieties. This proves
	\ref{thm:Characterization_dimG_and_index_finite} $\Rightarrow$
	\ref{thm:Characterization_G-alg_Unique_alg_str}.
	
	The implication \ref{thm:Characterization_G-alg_Unique_alg_str} $\Rightarrow$
	\ref{thm:Characterization_G-alg} is obvious.
\end{proof}

\subsection{Ingredients from toric geometry}

Recall that a toric variety is a normal irreducible variety $X$ together
with a regular faithful action of a torus of dimension $\dim X$.  For
details concerning toric varieties we refer to
\cite{Fu1993Introduction-to-to}.
The first two lemmas are certainly well-known;
for the convenience of the reader we present for both a short proof.

\begin{lemma}	
	\label{lem:centralizer_of_torus}
	Let $X$ be a toric variety, and let $T$ be a torus of dimension
	$\dim X$ that acts faithfully on $X$. Then 
	the centralizer of $T$ in $\Aut(X)$ is equal to $T$.
	In particular, the image of $T$ in $\Aut(X)$ is closed.
\end{lemma}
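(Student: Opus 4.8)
The plan is to exploit the orbit structure of the toric variety $X$. Since $T$ is abelian, the inclusion $T \subseteq \Cent_{\Aut(X)}(T)$ holds trivially, so only the reverse inclusion requires work. I will use that $X$ contains a dense open orbit $U$ and that the orbit map $t \mapsto t \cdot x_{0}$ (for a base point $x_{0} \in U$) identifies $U$ with $T$ as a $T$-variety, with $T$ acting on itself by translation; recall also that $U$ is the \emph{unique} open $T$-orbit.

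First I would fix an element $g \in \Cent_{\Aut(X)}(T)$ and note that, since $g$ commutes with every $t \in T$, one has $g(t \cdot x) = t \cdot g(x)$, so $g$ sends each $T$-orbit $T \cdot x$ to the $T$-orbit $T \cdot g(x)$. As $g$ is a homeomorphism, $g(U)$ is again an open $T$-orbit, whence $g(U) = U$ by uniqueness of the dense orbit. Therefore $g$ restricts to an automorphism of $U \cong T$ that commutes with all translations.

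Next I would determine such equivariant automorphisms of the torus. Writing the action of $T$ on $U \cong T$ as left translation, the identity $g(t \cdot s) = t \cdot g(s)$ evaluated at the neutral element $s = e$ yields $g(t) = t \cdot g(e)$ for all $t$. Hence $g|_{U}$ is translation by $t_{0} := g(e) \in T$; equivalently, $g$ agrees on $U$ with the automorphism of $X$ defined by $t_{0} \in T$.

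Finally, since $U$ is dense in $X$ and $X$ is separated and reduced, two automorphisms of $X$ coinciding on $U$ must be equal; thus $g = t_{0} \in T$, which gives $\Cent_{\Aut(X)}(T) = T$. The supplementary claim then follows at once: by the Example in Section~\ref{sec.alg_subgroups} the centralizer of any subset of $\Aut(X)$ is closed, and because $T$ equals its own centralizer, $T$ is closed in $\Aut(X)$. The only genuinely delicate point is the passage from the open orbit back to all of $X$, where separatedness is what forces $g$ to be the global translation $t_{0}$; the identification of the equivariant self-maps of $T$ with translations and the fact that $g$ permutes the $T$-orbits are routine.
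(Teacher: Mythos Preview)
Your proof is correct and follows essentially the same approach as the paper's: both exploit the open dense $T$-orbit $U$, show that $g$ agrees with some translation $t_{0}$ on $U$, and then use density to conclude $g = t_{0}$ on all of $X$. The only cosmetic difference is that the paper argues via $gU \cap U \neq \varnothing$ to find a point $x \in U$ with $gx \in U$, whereas you first establish $g(U) = U$ by uniqueness of the open orbit; both routes lead to the same translation identity.
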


\begin{proof}
	Let $g \in \Aut(X)$ such that $g t = t g$ for all $t \in T$.
	By definition, there is an open dense $T$-orbit in $X$, say $U$.
	Since $g U \cap U$ is non-empty, there exists $x \in U$ such that $g x \in U$.
	Using that $U = Tx$ we find $t_0 \in T$ with $gx = t_0x$. Thus for
	each $t \in T$ we get
	\[
		g t x = t g x = t t_0 x = t_0 t x \, .
	\]
	Using that $U = Tx$ is dense in $X$, we get $g = t_0$.
\end{proof}

\begin{lemma}
	\label{lem:ToricCoordinateRing}
	Let $X$ be a toric variety. Then the coordinate ring $\OO(X)$
	is finitely generated and integrally closed.
\end{lemma}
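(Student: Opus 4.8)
The plan is to prove the two assertions separately: \emph{integral closedness}, which I expect to follow formally from the normality of $X$ without using the torus action at all, and \emph{finite generation}, which will require the structure theory of toric varieties and is the real content.

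For integral closedness I would argue entirely inside the function field. Since $X$ is an irreducible variety, one has $\OO(X) = \bigcap_{x \in X} \OO_{X,x}$ as subrings of $\CC(X)$, the intersection being over all points $x$. As $X$ is normal, each local ring $\OO_{X,x}$ is integrally closed in $\CC(X)$, and an intersection of subrings of $\CC(X)$ that are each integrally closed in $\CC(X)$ is again integrally closed in $\CC(X)$ (a given monic relation with coefficients in the intersection holds over each $\OO_{X,x}$, forcing the root into each of them). Hence $\OO(X)$ is integrally closed in $\CC(X)$; since its own fraction field is contained in $\CC(X)$, it is \emph{a fortiori} integrally closed in $\operatorname{Frac}\OO(X)$.

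For finite generation I would invoke the fan description. Let $T$ be the acting torus, $M$ its character lattice, and let $\Sigma$ be the fan in $N_{\mathbb{R}} = \Hom(M,\ZZ) \otimes \mathbb{R}$ with $X \simeq X_\Sigma$ (see \cite{Fu1993Introduction-to-to}). Covering $X$ by the finitely many $T$-stable affine charts $U_\sigma = \Spec \CC[\sigma^\vee \cap M]$, $\sigma \in \Sigma$, and using that global regular functions are exactly the rational functions regular on every chart, I obtain
\[
	\OO(X) = \bigcap_{\sigma \in \Sigma} \CC[\sigma^\vee \cap M] = \CC\bigl[ C \cap M \bigr], \qquad C := \bigcap_{\sigma \in \Sigma} \sigma^\vee ,
\]
the middle equality because each $\CC[\sigma^\vee \cap M]$ is the span inside $\CC[M]$ of the monomials $\chi^m$ with $m \in \sigma^\vee \cap M$, so their intersection is spanned by the monomials lying in \emph{all} of the $\sigma^\vee$. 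Now $C$ is a finite intersection of rational polyhedral cones, hence itself rational polyhedral, and Gordan's lemma (\cite{Fu1993Introduction-to-to}) shows that the monoid $C \cap M$ is finitely generated; therefore $\OO(X) = \CC[C \cap M]$ is a finitely generated $\CC$-algebra.

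I expect the only genuine obstacle to be this finite generation, and more precisely the input that $X$ is covered by \emph{finitely many} $T$-stable affine charts of the form $\Spec\CC[\sigma^\vee \cap M]$, i.e. the passage to a finite fan, which ultimately rests on Sumihiro's theorem on equivariant affine coverings. This input is essential, not cosmetic: the rational $T$-action decomposes $\OO(X) = \bigoplus_{m} \OO(X)_m$ into weight spaces, each of which is at most one-dimensional because the density of the orbit forces $\CC(X)^T = \CC$, so that $\OO(X) = \CC[\SSS]$ for the weight monoid $\SSS := \{m \in M : \OO(X)_m \neq 0\}$; but a general submonoid of $\ZZ^n$ need not be finitely generated. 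The geometry is exactly what forces $\SSS = C \cap M$ for a rational polyhedral cone $C$, after which Gordan's lemma finishes the argument.
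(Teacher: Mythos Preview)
Your proof is correct and follows essentially the same route as the paper's: the paper also derives integral closedness from $\OO(X) = \bigcap_{x \in X} \OO_{X,x}$ together with normality, and finite generation from the identification $\OO(X) = \CC\bigl[(\bigcap_{\sigma \in \Sigma} \sigma^\vee) \cap M\bigr]$ followed by Gordan's lemma. The paper is terser and does not dwell on the role of Sumihiro's theorem (it simply refers to Fulton's book for the fan description), but the mathematical content is the same.
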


\begin{proof}
	This follows from \cite{Kn1993Uber-Hilberts-vier}. Here is a short direct proof.	
	Let $N$ be the lattice of one-parameter groups corresponding
	to the torus that acts on $X$ and let 
	$\Sigma$ be the fan in $N_\QQ = N \otimes_{\ZZ} \QQ$ that 
	corresponds to $X$.
	Now, $\OO(X)$ is generated as a $\CC$-algebra
	by the intersection of the finitely generated semi-groups 
	$\sigma^{\vee} \cap M$, 
	where $M = \Hom(N, \ZZ)$ is the dual lattice
	to $N$, $\sigma$ is a cone in $\Sigma$ and $\sigma^\vee$ denotes
	the dual cone of $\sigma$ inside $M_\QQ = M \otimes_{\ZZ} \QQ$. As the intersection of
	the semi-groups $\sigma^{\vee} \cap M$ is the intersection of
	the convex rational polyhedral cone $\cap_{\sigma \in \Sigma} \sigma^{\vee}$ with $M$, it is a 
	finitely generated semigroup 
	(Gordon's Lemma, see e.g. \cite[Proposition~1 in \S1.2]{Fu1993Introduction-to-to}). 
	This proves the first claim.
	
	Since $X$ is normal, every local ring $\OO_{X,x}$ is integrally closed, 
	and $\OO(X) = \bigcap_{x\in X}\OO_{X,x}$. 
	Hence $\OO(X)$ is also integrally closed.
\end{proof}

The next proposition is based on the study of homogeneous 
$\GG_a$-actions on affine toric
varieties in \cite{Li2010Affine-Bbb-T-varie}. Recall that a group action 
$\nu\colon G \to \Aut(X)$
on a toric variety is called \emph{homogeneous} if the torus normalizes 
the image $\nu(G)$. 
Note that for any homogeneous $\GG_{a}$-action $\nu$ there is a well-defined 
character $\chi \colon T \to \GG_{m}$, 
defined by the formula
\[
	t\,\nu(s)\, t^{-1} = \nu(\chi(t)\cdot s) \text{ for }t \in T, s \in \CC.
\]

\begin{proposition}	
	\label{prop:QuasiAffineToric}
	Let $X$ be a $n$-dimensional 
	quasi-affine toric variety.
	If $X$ is not a torus, then there exist
	homogeneous $\GG_a$-actions
	$$ 
	\eta_1, \ldots, \eta_n \colon \GG_a \times X \to X
	$$
	such that
	the corresponding characters $\chi_1, \ldots, \chi_n$ are linearly independent.
\end{proposition}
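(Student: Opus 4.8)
The plan is to pass to the affinization $\bar X:=\Spec\OO(X)$, translate homogeneous $\GG_{a}$-actions into Demazure roots, and then produce $n$ linearly independent roots by a lattice-point argument; quasi-affineness will enter only through two qualitative facts about the cone of $\bar X$. So first I would analyse $\bar X$. By Lemma~\ref{lem:ToricCoordinateRing} the ring $\OO(X)$ is finitely generated and integrally closed, and by Lemmas~\ref{lem:OpenImmersion} and~\ref{lem:ExtensionOfAction} the $T$-action extends to $\bar X$ and $\eta\colon X\into\bar X$ is a dominant open immersion. Hence $\bar X$ is a normal affine variety with a faithful $T$-action of dimension $n$ and a dense orbit, i.e. an affine toric variety $\bar X=U_{\bar\sigma}$, where $\bar\sigma\subseteq N_{\QQ}$ is the cone generated by the primitive generators $v_{\rho}$ ($\rho\in\Sigma(1)$) of the fan $\Sigma$ of $X$. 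Since $\eta$ is $T$-equivariant, standard toric geometry \cite{Fu1993Introduction-to-to} identifies $\Sigma$ with a subfan of the faces of $\bar\sigma$; in particular $\bar\sigma$ is strongly convex and every $v_{\rho}$ is an extremal ray of $\bar\sigma$. As $X$ is not a torus, $\Sigma(1)\neq\emptyset$, so $\bar\sigma\neq\{0\}$.

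Next I would invoke the study of homogeneous $\GG_{a}$-actions on affine toric varieties in \cite{Li2010Affine-Bbb-T-varie}. To every \emph{Demazure root} $m\in M:=\Hom(N,\ZZ)$ of $\bar\sigma$ — an element admitting a distinguished ray $\rho_{0}\in\Sigma(1)$ with $\langle m,v_{\rho_{0}}\rangle=-1$ and $\langle m,v_{\rho}\rangle\geq0$ for all $\rho\neq\rho_{0}$ — there corresponds a homogeneous $\GG_{a}$-action with associated character $m$. As this root subgroup is defined at the level of the fan and $\Sigma$ is a subfan of the faces of $\bar\sigma$, it restricts to a homogeneous $\GG_{a}$-action on $X$. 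It therefore suffices to exhibit $n$ Demazure roots of $\bar\sigma$ that are linearly independent in $M_{\QQ}$.

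To find them I would work with a single ray. Fix $\rho_{0}\in\Sigma(1)$, set $\tau:=\operatorname{cone}(v_{\rho}:\rho\neq\rho_{0})$, and consider the polyhedron $P:=\tau^{\vee}\cap\{m\in M_{\QQ}\mid\langle m,v_{\rho_{0}}\rangle=-1\}$ inside the affine hyperplane $H:=\{\langle\,\cdot\,,v_{\rho_{0}}\rangle=-1\}$; its lattice points are exactly the roots with distinguished ray $\rho_{0}$, and its recession cone is $C:=\tau^{\vee}\cap v_{\rho_{0}}^{\perp}$. The key point is that $C$ is full-dimensional in $v_{\rho_{0}}^{\perp}$. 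Indeed, strong convexity of $\bar\sigma$ rules out any non-trivial relation $\sum_{\rho\neq\rho_{0}}a_{\rho}v_{\rho}=c\,v_{\rho_{0}}$ with all $a_{\rho}\geq0$ and $c\leq0$, while extremality of $v_{\rho_{0}}$ rules out such a relation with $c>0$; hence the images of the $v_{\rho}$ ($\rho\neq\rho_{0}$) generate a strongly convex cone in $N_{\QQ}/\QQ v_{\rho_{0}}$, whose dual is $C$ and is therefore full-dimensional. Consequently $P$ is a polyhedron of full dimension in $H$ with full-dimensional recession cone $C$, so it contains balls of arbitrary radius; as $v_{\rho_{0}}$ is primitive, $H$ meets $M$ in a coset of the full-rank lattice $M\cap v_{\rho_{0}}^{\perp}$, and $P$ thus contains affinely independent lattice points $m_{0},\dots,m_{n-1}$. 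These are roots, and since $m_{0}\notin v_{\rho_{0}}^{\perp}$ while the differences $m_{1}-m_{0},\dots,m_{n-1}-m_{0}$ span $v_{\rho_{0}}^{\perp}$, they are linearly independent in $M_{\QQ}$. The resulting actions $\eta_{1},\dots,\eta_{n}$ then have linearly independent characters $\chi_{i}=m_{i}$, as required.

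The step I expect to be the crux is the full-dimensionality of $C$: it is exactly here that the two consequences of quasi-affineness — that each $v_{\rho}$ is extremal and that $\bar\sigma$ is strongly convex — are converted into the quantitative statement that $P$ carries enough integral points to span $M_{\QQ}$. Dropping quasi-affineness, a ray of $\Sigma$ need not be extremal in $\operatorname{cone}(\Sigma(1))$, the cone $C$ may drop dimension, $P$ becomes too thin to contain spanning lattice points, and the conclusion can fail. The case where $\bar\sigma$ is not full-dimensional, i.e. $X$ has a torus factor, needs no separate treatment: there $C$ acquires a lineality space of the complementary dimension, which supplies precisely the roots in the torus-factor directions.
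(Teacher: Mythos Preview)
Your overall architecture---pass to the affinization $\bar X$, identify homogeneous $\GG_a$-actions with Demazure roots via \cite{Li2010Affine-Bbb-T-varie}, and then find $n$ linearly independent roots attached to a single ray---is exactly the paper's strategy. The lattice-point argument with the recession cone $C$ is a clean repackaging of the paper's choice of $e+m'_1,\dots,e+m'_n$.

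There is, however, a genuine gap at the restriction step. The sentence ``As this root subgroup is defined at the level of the fan and $\Sigma$ is a subfan of the faces of $\bar\sigma$, it restricts to a homogeneous $\GG_a$-action on $X$'' is false as stated. Take $X=\AA^2\setminus\{0\}$, so $\bar X=\AA^2$ and $\bar\sigma=\operatorname{cone}(e_1,e_2)$. The element $m=(-1,0)$ is a Demazure root with distinguished ray $\rho_0=\operatorname{cone}(e_1)$, but the corresponding action $(s,(x,y))\mapsto (x+s,y)$ sends $(-s,0)\in X$ to $(0,0)\notin X$. So not every root of $\bar\sigma$ yields an action on $X$; being a subfan is not enough, because root subgroups are \emph{not} toric automorphisms and do not respect the orbit stratification.

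What is missing is precisely the point the paper isolates: one must choose roots whose $\GG_a$-action fixes $\bar X\setminus X$ pointwise. Since $\rho_0\in\Sigma(1)$, the complement $\bar X\setminus X$ is contained in $Z:=\bigcup_{\rho\neq\rho_0}V(\rho)$, and a root $m$ with $\langle m,v_\rho\rangle>0$ for every $\rho\neq\rho_0$ gives an action that fixes $Z$ pointwise (the image of the associated derivation lies in the ideal of each $V(\rho)$), hence preserves $X$. Your argument is easily repaired: replace $P$ by the shifted polyhedron $P':=\{\,m\in H\mid \langle m,v_\rho\rangle\ge 1\ \text{for all}\ \rho\neq\rho_0\,\}$. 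This has the same recession cone $C$, so your full-dimensionality argument still produces $n$ affinely independent lattice points in $P'$, and those roots now genuinely restrict to $X$. With this correction your proof is correct and essentially coincides with the paper's.
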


The proof needs some preparation. Denote by
$Y$ the spectrum of $\OO(X)$. 
By Lemma~\ref{lem:ToricCoordinateRing}, the variety $Y$ is normal, 
and the faithful torus action on $X$ extends uniquely to a
faithful torus action on $Y$, by Lemma~\ref{lem:ExtensionOfAction}.

The following notation is taken from \cite{Li2010Affine-Bbb-T-varie}. 
Let $N$ be a lattice of rank $n$, $M = \Hom(N,\ZZ)$
be its dual lattice and $N_\QQ = N \otimes_{\ZZ} \QQ$, 
$M_\QQ = M \otimes_{\ZZ} \QQ$.
Thus, we have a natural pairing $M_\QQ \times N_\QQ \to \QQ$, 
$(m, n) \mapsto \langle m, n \rangle$. Let $\sigma \subset N_\QQ$
be the strongly convex polyhedral cone that describes $Y$
and let $\sigma^{\vee}_M$ be the intersection of
the dual cone $\sigma^{\vee}$ in $M_\QQ$ with $M$.
Thus $Y = \Spec R$, where
\[
	R := \CC[\sigma^{\vee}_M] = \bigoplus_{m \in \sigma^{\vee}_M} \CC \chi^m
	\subseteq \CC[M] \, .
\]
For each extremal
ray $\rho \subset \sigma$, denote by $\rho^{\bot}$
the elements $u \in M_{\QQ}$ with $\langle u, v \rangle = 0$
for all $v \in \rho$. Moreover, let $\tau_M = \rho^{\bot} \cap \sigma^{\vee}_M$ 
and let
\[
	S_\rho = \{ \, e \in M \ | \ e \not\in \sigma^{\vee}_M \, , 
	\ e+m \in \sigma^{\vee}_M \ 
	\textrm{for all $m \in \sigma^{\vee}_M \setminus \tau_M$} \, \} \, .
\]
By \cite[Remark~2.5]{Li2010Affine-Bbb-T-varie} we have $S_\rho \neq \varnothing$,
and $e + m \in S_\rho$ for all $e \in S_\rho$ and all  $m \in \tau_M$.
Let us recall the description of the homogeneous locally nilpotent derivations on $R$.

\begin{proposition}[{\cite[Lemma~2.6, Theorem~2.7]{Li2010Affine-Bbb-T-varie}}]
	\label{prop.G_a-actions_on_toric_var}
	Let $\rho$ be an extremal ray in $\sigma$ and let $e \in S_\rho$.
	Then
	\[
		\partial_{\rho, e} \colon R \to R
		\, , \quad
		\chi^m \mapsto \langle m, \rho \rangle \chi^{e+m}
	\]
	is a homogeneous locally nilpotent derivation of degree $e$,
	and every homogeneous locally nilpotent derivation of $R$ 
	is a constant multiple of some $\partial_{\rho, e}$.
\end{proposition}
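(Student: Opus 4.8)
The plan is to prove the two assertions separately, treating first the fact that each $\partial_{\rho,e}$ is a homogeneous locally nilpotent derivation and then the classification.

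For the first assertion I would begin by checking that the formula $\chi^m \mapsto \langle m,\rho\rangle\,\chi^{e+m}$ actually lands in $R$: if $\langle m,\rho\rangle \neq 0$ for some $m \in \sigma^\vee_M$ then $\langle m,\rho\rangle > 0$, so $m \in \sigma^\vee_M \setminus \tau_M$, and the defining property of $S_\rho$ gives $e+m \in \sigma^\vee_M$. The Leibniz rule reduces to the additivity of $m \mapsto \langle m,\rho\rangle$, and homogeneity of degree $e$ is immediate. For local nilpotency it suffices, since $R$ is finitely generated and the elements annihilated by some power of a derivation form a subalgebra, to test the generators $\chi^m$, where $\partial_{\rho,e}^k(\chi^m) = \bigl(\prod_{j=0}^{k-1}(\langle m,\rho\rangle + j\langle e,\rho\rangle)\bigr)\chi^{m+ke}$. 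Here I would show $\langle e,\rho\rangle = -1$: testing the condition defining $S_\rho$ on the facets $\rho'^{\bot}\cap\sigma^\vee$ of the other rays $\rho'$ forces $\langle e,\rho'\rangle \geq 0$, so $e \notin \sigma^\vee$ forces $\langle e,\rho\rangle < 0$, while $e+m \in \sigma^\vee$ for $\langle m,\rho\rangle = 1$ gives $\langle e,\rho\rangle \geq -1$. With $\langle e,\rho\rangle = -1$ the progression $\langle m,\rho\rangle - j$ hits $0$ at $j = \langle m,\rho\rangle$, so the product vanishes and the intermediate exponents stay in $\sigma^\vee_M$.

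For the converse, let $\partial$ be a nonzero homogeneous locally nilpotent derivation of degree $e$. Writing $\partial(\chi^m) = \phi(m)\,\chi^{m+e}$ defines $\phi\colon \sigma^\vee_M \to \CC$, and Leibniz forces $\phi$ to be additive; since $\sigma^\vee_M$ spans $M_\QQ$ it extends to a linear functional, i.e. $\phi(m) = \langle m,v\rangle$ for a unique $v \in N_{\CC} = \Hom(M,\CC)$. To constrain $v$ I would combine two inputs. Local nilpotency, via $\partial^k(\chi^m) = \bigl(\prod_{j=0}^{k-1}(\langle m,v\rangle + j\langle e,v\rangle)\bigr)\chi^{m+ke}$, forces the progression $\langle m,v\rangle + j\langle e,v\rangle$ to vanish at some non-negative integer for every $m$; as $\langle\cdot,v\rangle \not\equiv 0$ this yields $\langle e,v\rangle \neq 0$ and $\langle m,v\rangle \in \langle e,v\rangle\cdot\ZZ_{\leq 0}$ for all $m \in \sigma^\vee_M$. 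Rescaling by $1/\langle e,v\rangle$ turns $\langle\cdot,v\rangle$ into an integer-valued functional that is non-positive on $\sigma^\vee_M$, so by the duality $\sigma = (\sigma^\vee)^\vee$ the vector $v$ is a negative rational multiple of some nonzero $u \in \sigma\cap N$, and the sign analysis also gives $\langle e,u\rangle < 0$; in particular some extremal ray $w$ of $\sigma$ satisfies $\langle e,w\rangle < 0$. The second input is that $\partial$ maps $R$ into $R$: whenever $\langle m,v\rangle \neq 0$ we need $m+e \in \sigma^\vee_M$, so the bad set $\{m \in \sigma^\vee_M : m+e \notin \sigma^\vee_M\}$ lies in $u^{\bot}$, and for the ray $w$ above the whole facet $w^{\bot}\cap\sigma^\vee$ of $\sigma^\vee$ lies in this bad locus, hence in $u^{\bot}$.

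The crux, and the step I expect to be the main obstacle, is passing from these inclusions to the conclusion that $v$ lies along a single extremal ray. I would argue by dimension count: the facet $w^{\bot}\cap\sigma^\vee$ spans the hyperplane $w^{\bot}$ of $M_\QQ$, so its containment in the hyperplane $u^{\bot}$ forces $u^{\bot} = w^{\bot}$ and hence $u \parallel w$; thus $u$, and with it $v$, is a positive multiple of the primitive generator $\rho$ of $w$, and the same argument shows $w$ is the unique ray with $\langle e,w\rangle < 0$. Finally $e \notin \sigma^\vee_M$ because $\langle e,\rho\rangle < 0$, and the inclusion of the bad set in $\rho^{\bot} = u^{\bot}$ is exactly the requirement that $e+m \in \sigma^\vee_M$ for every $m \in \sigma^\vee_M \setminus \tau_M$; together these give $e \in S_\rho$. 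Writing $v = c\rho$ with $c \in \CC$ then gives $\partial = c\,\partial_{\rho,e}$, as claimed.
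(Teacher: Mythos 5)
The paper contains no proof of this proposition to compare against: it is imported verbatim from Liendo \cite[Lemma~2.6, Theorem~2.7]{Li2010Affine-Bbb-T-varie}. Your blind reconstruction is essentially the standard argument behind that citation (the Demazure-root description of homogeneous locally nilpotent derivations on affine toric varieties): Leibniz forces additivity of the coefficient function, which extends to a functional $\langle\cdot,v\rangle$ because $\sigma^\vee_M$ generates $M$; local nilpotency on a monomial is equivalent to the arithmetic progression $\langle m,v\rangle+j\langle e,v\rangle$ vanishing at some $j\geq 0$; and the ray is identified via your containment of the facet $w^{\bot}\cap\sigma^{\vee}$ in $u^{\bot}$. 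That last dimension count is legitimate: since $\sigma$ is strongly convex and $w$ is an extremal ray, face duality gives $\dim(\sigma^{\vee}\cap w^{\bot})=n-1$, and the lattice points of this rational cone span $w^{\bot}$, so $w^{\bot}=u^{\bot}$ and the converse goes through, including the verification that the ``bad set'' lying in $\rho^{\bot}$ is precisely the membership condition $e\in S_\rho$.

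One step you assert without justification, and on which the first assertion rests, is the existence of $m\in\sigma^\vee_M$ with $\langle m,\rho\rangle=1$: if the minimal positive value $d$ of $\langle\cdot,\rho\rangle$ on $\sigma^\vee_M$ were $\geq 2$, your argument would only give $\langle e,\rho\rangle\geq -d$, and local nilpotency of $\partial_{\rho,e}$ would fail on monomials whose value is not divisible by $|\langle e,\rho\rangle|$. The claim is true but needs proof: since $\rho$ is an extremal ray with primitive generator, the facet $\rho^{\bot}\cap\sigma^{\vee}$ spans the hyperplane $\rho^{\bot}$, so the slice of $\sigma^{\vee}$ at height one contains translates of arbitrarily large balls of the affine hyperplane $\{\langle\cdot,\rho\rangle=1\}$, whose lattice points form a full-rank coset of $\rho^{\bot}\cap M$; hence height-one lattice points exist in $\sigma^\vee_M$. (Alternatively, argue arithmetically: first $-\langle e,\rho\rangle=d$, then $e+m\in\sigma^\vee_M$ shows the value semigroup is stable under subtracting $d$, so $d$ divides every value; since $\sigma^\vee_M$ generates $M$ and $\rho$ is primitive the values generate $\ZZ$, forcing $d=1$.) The same existence issue arises for your test points when deriving $\langle e,\rho'\rangle\geq 0$: you need $m\in\sigma^\vee_M\cap\rho'^{\bot}$ with $\langle m,\rho\rangle>0$, which holds because distinct extremal rays have distinct dual facets, so $\sigma^{\vee}\cap\rho'^{\bot}\not\subseteq\rho^{\bot}$, and one clears denominators on a rational point. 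Finally, a cosmetic slip: $v$ is a complex, not a negative rational, multiple of $u$ --- it is the rescaled functional $\langle\cdot,v\rangle/\langle e,v\rangle$ that is integral and non-positive on $\sigma^\vee_M$ --- but your rescaling already treats this correctly, and the final constant $c\in\CC$ absorbs the scalar.
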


\begin{proof}[Proof of Proposition~\ref{prop:QuasiAffineToric}]
	Since $X$ is not a torus, $Y$ is also not a torus. Thus $\sigma$
	contains extremal rays, say $\rho_1, \ldots, \rho_k$ and $k \geq 1$.
	Recall that associated to these extremal rays, there
	exist torus-invariant divisors $V(\rho_1), \ldots,V(\rho_k)$
	in $Y$. Again,
	since $X$ is not a torus, one of these divisors does intersect $X$. 
	Let us assume that $\rho = \rho_1$
	is an extremal ray such that $V(\rho) \cap X$ is non-empty. 
	Then using the orbit-cone correspondence, one
	can see that $Y \setminus X$ is contained
	in the union $Z = \bigcup_{i=2}^k V(\rho_i)$, see 
	\cite[\S3.1]{Fu1993Introduction-to-to}. Let $e \in S_\rho$ 
	be fixed.
	We claim that the $\GG_a$-action on $Y$ associated to
	the locally nilpotent derivation $\partial_{\rho, e+m'}$
	of Proposition~\ref{prop.G_a-actions_on_toric_var}
	fixes $Z$ for all $m' \in \tau_M \setminus \bigcup_{i \geq 2} \rho_i^{\bot}$.
	
	Let us fix $m' \in \tau_M$ with $\langle m', v \rangle > 0$
	for all $v \in \bigcup_{i \geq 2} \rho_i$. Note that the fixed point set
	of the $\GG_a$-action on $Y$ corresponding to 
	$\partial_{\rho, e+m'}$
	is the zero set of the ideal generated by the image of $\partial_{\rho, e+m'}$.
	The divisor $V(\rho_i)$ is the zero set of the kernel of 
	the canonical $\CC$-algebra surjection
	\[
		p_i \colon
		\CC[\sigma^{\vee}_M] \to \CC[\sigma^{\vee}_M \cap \rho_i^{\bot}] \, , 
		\quad 
		\chi^m \mapsto \left\{\begin{array}{lr}
						\chi^m , & \textrm{if $m \in \rho_i^{\bot}$} \\
						0, & \textrm{otherwise}
					\end{array}\right. \, ,
	\] 
	see \cite[\S3.1]{Fu1993Introduction-to-to}.
	Thus we have to prove that for all $i = 2, \ldots, k$ the composition
	\[
		\CC[\sigma^{\vee}_M] \stackrel{\partial_{\rho, e+m'}}{\longrightarrow} 
		\CC[\sigma^{\vee}_M]
		\stackrel{p_i}{\longrightarrow} \CC[\sigma^{\vee}_M \cap \rho_i^{\bot}]
	\]
	is the zero map. Since, by definition, $\partial_{\rho, e+m'}$ vanishes 
	on $\tau_M = \rho^\bot \cap \sigma^{\vee}_M$, we have only
	to show that for all $m \in \sigma^{\vee}_M \setminus \tau_M$
	the following holds:
	\[
		\langle e + m' + m, v \rangle > 0 \quad
		\textrm{for all $v \in \rho_i$, $i=2, \ldots, k$.}
	\]
	This is satisfied, because $\langle m', v \rangle > 0$ and
	$\langle e+m, v \rangle \geq 0$ (note that $e \in S_\rho$
	implies $e+m \in \sigma^{\vee}_M$). This proves the claim.
	
	Since $\tau_M$ spans a hyperplane in $M$ and $e \not\in \tau_M$, we can 
	choose 
	$m'_1, \ldots, m'_n \in \tau_M \setminus \bigcup_{i \geq 2} \rho_i^{\bot}$ 
	such that $e+m'_1, \ldots, e+m'_n$ are linearly independent in $M_\QQ$.
	Hence, the homogeneous locally nilpotent derivations
	\[
		\partial_{\rho, e+ m'_i} \, , \quad \textrm{$i=1, \ldots, n$}
	\]
	define $\GG_{a}$-actions on $Y$ that fix $Z$ and thus restrict
	to $\GG_a$-actions on $X$. Moreover, the 
	character of $\partial_{\rho, e+ m'_i}$
	is $\chi_i = \chi^{e+m'_i}$. In particular, $\chi_1, \ldots, \chi_n$
	are linearly independent, 
	finishing the proof of Proposition~\ref{prop:QuasiAffineToric}.
\end{proof}

\subsection{Some topological ingredients}
For the convenience of the reader, 
we insert the following well-known statement.

\begin{lemma}
	\label{lem:FiniteGeneration}
	For a complex variety $X$, the rational singular (co)homology groups are finitely generated.
\end{lemma}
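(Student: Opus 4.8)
The plan is to show that the underlying topological space of $X$, in the classical (Euclidean) topology, is homotopy equivalent to a finite CW complex; since the rational singular (co)homology of a finite CW complex is plainly finite-dimensional, and hence finitely generated, the lemma follows. I would establish this in two stages: first reduce to the case where $X$ is affine, and then triangulate an affine variety by regarding it as a real semialgebraic set.

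For the reduction, cover $X$ by finitely many open affine subvarieties $U_1, \ldots, U_r$. Because $X$ is separated, all the intersections $U_{i_1} \cap \cdots \cap U_{i_s}$ are again affine. I would then argue by induction on $r$. Writing $V = U_1$ and $W = U_2 \cup \cdots \cup U_r$, both $W$ and $V \cap W = \bigcup_{i \geq 2}(U_1 \cap U_i)$ are covered by at most $r-1$ affines, so by the inductive hypothesis they have finitely generated rational (co)homology, as does the affine piece $V$. The Mayer--Vietoris sequence
\[
	\cdots \to H_k(V \cap W; \QQ) \to H_k(V; \QQ) \oplus H_k(W; \QQ) \to H_k(X; \QQ) \to H_{k-1}(V \cap W; \QQ) \to \cdots
\]
then squeezes $H_k(X; \QQ)$ between finitely generated groups, forcing it to be finitely generated; the analogous sequence handles cohomology. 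This leaves the base case of an affine variety.

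For an affine variety $X \subseteq \AA^N = \CC^N$, I would regard $X$ as a real algebraic, hence semialgebraic, subset of $\mathbb{R}^{2N}$ (each complex defining equation splits into two real ones after separating real and imaginary parts). By the triangulation theorem for semialgebraic sets ({\L}ojasiewicz; see e.g. Bochnak--Coste--Roy), $X$ is semialgebraically homeomorphic to the union of finitely many open simplices of a finite simplicial complex, and in particular carries the structure of a finite CW complex. Its rational (co)homology is therefore finitely generated, completing the argument.

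I expect the only real subtlety to be the non-compact (non-projective) case: for a projective variety a finite triangulation is classical and immediate, but for a general variety of finite type one must guarantee finiteness of the triangulation even when $X$ is not proper. This is precisely what the semialgebraic triangulation theorem provides --- finitely many simplices regardless of compactness --- and it is the one external input on which the proof genuinely relies. Alternatively, one could bypass the Mayer--Vietoris reduction entirely by invoking Hironaka's triangulation of arbitrary algebraic varieties, which handles the non-affine case in one stroke.
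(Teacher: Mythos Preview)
Your proposal is correct and follows essentially the same strategy as the paper: reduce to the affine case via a finite affine cover and Mayer--Vietoris (using separatedness so that intersections of affines are affine), and then invoke that an affine variety has the homotopy type of a finite CW complex. The only difference is the external input for the affine case --- the paper cites the deformation-retract results of Kar\v{c}jauskas and Hamm--Mihalache, whereas you use semialgebraic triangulation; both yield the needed finiteness.
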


\begin{proof}
	Using the universal coefficient theorem for cohomology, it 
	is enough to prove this for the homology groups. If $X$ is affine,
	then $X$ has the homotopy type of a finite CW-complex 
	(see \cite{Ka1979Homotopy-propertie} or
	\cite[Theorem~1.1]{HaMi1997Deformation-retrac}), and hence all 
	homology groups are finitely generated. Since every
	variety can be covered by finitely many
	open affine subvarieties and since intersections of open affine subvarieties
	are again affine, the lemma follows from the Mayer-Vietoris exact sequence. 
\end{proof}

For a variety $X$, the Euler characteristic is defined by
\[
	\chi(X) = \sum_{i \geq 0} (-1)^i \dim_\QQ H^i(X, \QQ) \, ,
\]
where $H^i(X, \QQ)$ denotes the $i$-th singular cohomology group with rational
coefficients. We will use the following properties of the Euler characteristic, 
see \cite[Appendix]{KrPo1985Semisimple-group-a}.

\begin{lemma}
\label{lem:Properties_EulerChar}
	The Euler characteristic has the following properties.
\begin{enumerate}
	\item If $X$ is a variety and $Y \subseteq X$ a closed subvariety, 
		then $\chi(X) = \chi(Y) + \chi(X \setminus Y)$.
	\item If $X \to Y$ is a fiber bundle which is locally trivial in the \'etale topology with fiber $F$,
		then $\chi(X) = \chi(Y) \chi(F)$.
\end{enumerate}
\end{lemma}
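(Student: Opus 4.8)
The plan is to treat the two statements with different tools, since the natural proofs live in different cohomology theories. For the additivity in~(1) I would pass to cohomology with compact support $H^\bullet_c(\,\cdot\,,\QQ)$ and the associated Euler characteristic $\chi_c(X) = \sum_i (-1)^i \dim_\QQ H^i_c(X,\QQ)$, whereas for the multiplicativity in~(2) I would work with ordinary cohomology and the Leray spectral sequence. In both cases Lemma~\ref{lem:FiniteGeneration} guarantees that all graded pieces are finite-dimensional, so that the Euler characteristics are well defined and the alternating sums below are finite.

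For~(1), write $U := X \setminus Y$, an open subvariety with closed complement $Y$. Cohomology with compact support fits into the long exact sequence
\[
	\cdots \to H^i_c(U,\QQ) \to H^i_c(X,\QQ) \to H^i_c(Y,\QQ) \to H^{i+1}_c(U,\QQ) \to \cdots
\]
(extension by zero along the open inclusion, restriction along the closed one). Taking the alternating sum of dimensions, the connecting maps cancel and yield $\chi_c(X) = \chi_c(Y) + \chi_c(U)$, i.e. $\chi_c$ is additive with no further hypotheses. The remaining, and decisive, point is the comparison $\chi(Z) = \chi_c(Z)$ for every complex variety $Z$, which then turns the displayed identity into the assertion of~(1). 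For $Z$ smooth of pure complex dimension $d$ this is immediate from Poincar\'e duality $H^i_c(Z,\QQ) \cong H^{2d-i}(Z,\QQ)^{\vee}$, since the shift is by the \emph{even} number $2d$:
\[
	\chi_c(Z) = \sum_i (-1)^i \dim H^i_c(Z) = \sum_i (-1)^i \dim H^{2d-i}(Z) = \chi(Z),
\]
the last equality by reindexing and $(-1)^{2d}=1$.

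The main obstacle is precisely to remove the smoothness hypothesis in this comparison. One cannot simply induct along the stratification $Z = Z_{\mathrm{reg}} \sqcup Z_{\mathrm{sing}}$: additivity of $\chi_c$ reduces $\chi_c(Z)$ to the smooth strata, but concluding $\chi(Z) = \chi_c(Z)$ would already require the additivity of the \emph{ordinary} $\chi$ that we are trying to establish, so the induction is circular. Breaking this circularity needs a genuinely local input about complex varieties, namely that $\chi = \chi_c$ holds in general, which I would take from the theory of mixed Hodge structures (the Hodge--Deligne $E$-polynomials of $H^\bullet$ and $H^\bullet_c$ specialise at $u=v=1$ to $\chi$ and $\chi_c$, and Poincar\'e--Verdier duality forces their coincidence); this is the content borrowed from \cite[Appendix]{KrPo1985Semisimple-group-a}.

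For~(2), since we are over $\CC$, an \'etale-locally trivial bundle is locally trivial in the classical topology: an \'etale morphism is a local analytic isomorphism, so a trivialising \'etale cover admits local analytic sections and pulls the product structure back to classical neighbourhoods. Thus $X \to Y$ is a genuine topological fiber bundle with fiber $F$, and I would invoke its Leray spectral sequence
\[
	E_2^{p,q} = H^p\bigl(Y, \mathcal{H}^q(F)\bigr) \Longrightarrow H^{p+q}(X,\QQ),
\]
where $\mathcal{H}^q(F)$ is the local system on $Y$ with stalk $H^q(F,\QQ)$, of rank $b_q := \dim H^q(F,\QQ)$. Since passing to the cohomology of a differential preserves the Euler characteristic and the spectral sequence is bounded, $\chi(X) = \sum_{p,q} (-1)^{p+q} \dim E_2^{p,q}$. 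The key elementary fact is that the Euler characteristic of a local system $\mathcal{L}$ of rank $r$ on $Y$ equals $r \cdot \chi(Y)$, as one sees from a finite cell model of $Y$ whose cochain groups have rank $r$ times the number of cells in each degree. Applying this in the variable $p$ gives $\sum_p (-1)^p \dim H^p(Y,\mathcal{H}^q) = b_q\, \chi(Y)$, whence
\[
	\chi(X) = \sum_q (-1)^q b_q\, \chi(Y) = \chi(Y)\,\chi(F),
\]
as claimed. Here the argument goes through for ordinary cohomology directly, so~(2) is the easier of the two; the genuine difficulty is concentrated in the comparison $\chi = \chi_c$ needed for~(1).
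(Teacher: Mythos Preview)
The paper does not prove this lemma; it merely states it and cites \cite[Appendix]{KrPo1985Semisimple-group-a}. So there is no argument in the paper to compare yours against---your proposal supplies a proof where the paper supplies only a reference.

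Your outline is correct. For~(1), passing to $H^\bullet_c$ and its long exact sequence is the standard route, and you correctly isolate the comparison $\chi = \chi_c$ on arbitrary complex varieties as the only non-formal input, deferring it to the cited appendix. One small caution: the phrase ``Poincar\'e--Verdier duality forces their coincidence'' is loose in the singular case, since the dualizing complex is no longer a shift of $\QQ$ and the duality does not directly compare $H^\bullet$ with $H^\bullet_c$. A cleaner way to break the circularity you correctly diagnose is resolution of singularities together with the blow-up long exact sequence, which is available in \emph{both} ordinary and compactly supported cohomology and hence reduces the singular case to the smooth one by induction on dimension; but since you are in any case invoking the reference at this point, this is a matter of phrasing rather than a gap. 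For~(2) the Leray spectral sequence argument is fine; the identity $\chi(Y,\mathcal{L}) = \rank(\mathcal{L})\cdot\chi(Y)$ is justified because complex varieties admit finite triangulations, so the cellular cochain complex with coefficients in $\mathcal{L}$ has its ranks scaled by $\rank(\mathcal{L})$.
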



\subsection{Results on the fixed point variety}
If $G$ is a group, acting on a variety $X$,
then we denote by $X^G$ the fixed point variety of $X$ by $G$. 

The first result gives us a criterion for the existence of fixed points for a $p$-group action.

\begin{proposition} 
\label{prop:EulerChar}
Let $G$ be a finite $p$-group for a prime $p$
and let $X$ be a quasi-projective $G$-variety. 
If $p$ does not divide the Euler characteristic $\chi(X)$, then 
the fixed point variety $X^G$ is non-empty.
\end{proposition}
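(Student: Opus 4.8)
The plan is to prove the congruence $\chi(X) \equiv \chi(X^G) \pmod p$ and argue by induction on the order $|G|$, reducing the general $p$-group case to that of the cyclic group $\ZZ/p\ZZ$. Once the congruence is available, the statement is immediate: since $p \nmid \chi(X)$ we get $p \nmid \chi(X^G)$, in particular $\chi(X^G) \neq 0$, so $X^G \neq \varnothing$.

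The heart of the argument is the following special case, which I would establish first: if $H \simeq \ZZ/p\ZZ$ acts on a quasi-projective variety $X$, then $\chi(X) \equiv \chi(X^H) \pmod p$. The fixed-point set $X^H$ is the locus where each of the finitely many elements of $H$ acts as the identity, hence closed by separatedness of $X$, and therefore a quasi-projective subvariety. By Lemma~\ref{lem:Properties_EulerChar} (1) we have $\chi(X) = \chi(X^H) + \chi(U)$ with $U := X \setminus X^H$, so it suffices to show $p \mid \chi(U)$. Because the only subgroups of $\ZZ/p\ZZ$ are the trivial one and the whole group, every point of $X$ with nontrivial stabilizer is already fixed by $H$; thus $H$ acts freely on the $H$-invariant open set $U$. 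The geometric quotient $U \to U/H$ then exists as a morphism of varieties and is a finite étale morphism, locally trivial in the étale topology with fiber the $p$-point set $H$. Applying Lemma~\ref{lem:Properties_EulerChar} (2) gives $\chi(U) = \chi(U/H) \cdot \chi(H) = p \cdot \chi(U/H)$, whence $p \mid \chi(U)$ and the congruence follows.

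For the induction, the base case $|G| = 1$ is trivial. If $|G| > 1$, I would choose a subgroup $H \subseteq G$ with $H \simeq \ZZ/p\ZZ$ lying in the center of $G$; such an $H$ exists because the center of a nontrivial finite $p$-group is nontrivial and contains an element of order $p$. As $H$ is normal in $G$, the subvariety $X^H$ is $G$-invariant and closed, hence a quasi-projective variety on which the strictly smaller $p$-group $G/H$ acts, and one has $X^G = (X^H)^{G/H}$. The congruence of the previous paragraph gives $\chi(X^H) \equiv \chi(X) \not\equiv 0 \pmod p$, so $p \nmid \chi(X^H)$, and the inductive hypothesis applied to the action of $G/H$ on $X^H$ yields $X^G = (X^H)^{G/H} \neq \varnothing$. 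The step I expect to require the most care is the existence of the quotient $U/H$ as a variety together with its structure as an étale-locally-trivial fiber bundle over it; this is where one uses that the free action of the finite group $H$ on the quasi-projective variety $U$ admits invariant affine open covers, so that the quotient exists and $U \to U/H$ is a principal $H$-bundle in the étale topology, exactly in the form demanded by Lemma~\ref{lem:Properties_EulerChar} (2).
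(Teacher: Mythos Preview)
Your proof is correct, and it takes a genuinely different route from the paper's. The paper argues the contrapositive by induction on $\dim X$: assuming $X^G=\varnothing$, every $G$-orbit has cardinality $p^k$ with $k\geq 1$, but these cardinalities may vary, so one passes to the smooth locus, forms the geometric quotient $X^{\mathrm{sm}}\to X^{\mathrm{sm}}/G$, uses generic smoothness to find an open piece over which the map is a finite \'etale bundle (with fiber a single $G$-orbit, hence of cardinality divisible by $p$), and handles the lower-dimensional complement by the inductive hypothesis.

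Your induction on $|G|$ via a central $\ZZ/p\ZZ$ sidesteps the stratification entirely: for $H\simeq\ZZ/p\ZZ$ there are only the two orbit types, so $U=X\setminus X^H$ carries a free action and $U\to U/H$ is globally a $p$-sheeted \'etale cover, giving $p\mid\chi(U)$ in one stroke. This is cleaner, needs no generic smoothness, and in fact yields the stronger congruence $\chi(X)\equiv\chi(X^G)\pmod p$ (by iterating your cyclic step along a central series). The only point requiring care, which you correctly flag, is the existence of the quotient $U/H$ and the \'etale local triviality of $U\to U/H$; this is exactly what the paper invokes from \cite{Bi2002Quotients-by-actio} and \cite{1971Revetements-etales}, so you are on solid ground there.
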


\begin{proof}
	Assume that $X^G$ is empty, i.e. every $G$-orbit has cardinality $p^k$
	for some $k > 0$. We prove by induction on the dimension of $X$
	that $p$ divides $\chi(X)$.
	Since $X$ is quasi-projective, the same is true for the smooth locus $X^{\textrm{sm}}$
	and thus the geometric quotient 
	$\pi \colon X^{\textrm{sm}} \to X^{\textrm{sm}} /G$ exists, see 	
	\cite[Theorem~4.3.1]{Bi2002Quotients-by-actio}.
	By generic smoothness
	\cite[Corollary~10.7, Chp. III]{Ha1977Algebraic-geometry} 
	there exists an open dense subset $U$ in $X / G$ 
	such that $q := \pi |_{\pi^{-1}(U)} \colon \pi^{-1}(U) \to U$ 
	is \'etale. Since $\pi \colon X \to X / G$ is finite, $q$
	is also finite and thus $q$ is an \'etale locally trivial fiber bundle, 
	see \cite[Corollaire~5.3]{1971Revetements-etales}.
	Since each fiber of $q$ is a $G$-orbit, it follows by
	Lemma~\ref{lem:Properties_EulerChar} that the Euler characteristic
	of $\pi^{-1}(U)$ is divisible by $p$. By assumption, $G$ acts without
	fixed point on $X \setminus \pi^{-1}(U)$ and thus by induction 
	hypothesis, $\chi(X \setminus \pi^{-1}(U))$ is divisible by $p$.
	Using
	\[
		\chi(X) = \chi(\pi^{-1}(U)) + \chi(X \setminus \pi^{-1}(U))
	\] 
	(see Lemma~\ref{lem:Properties_EulerChar})
	we get that $p$ divides $\chi(X)$.
\end{proof}

The second result is a consequence of a theorem of Fogarty
\cite{Fo1973Fixed-point-scheme}. 

\begin{proposition}
\label{prop:Fogarty}
Let $G$ be a reductive group acting on a variety $X$. 
Assume that $X$ is smooth at some point $x \in X^G$.
Then $X^G$ is smooth at $x$ and the tangent space
satisfies $T_x (X^G) = (T_x X)^G$.
\end{proposition}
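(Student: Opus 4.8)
The plan is to derive both assertions from Fogarty's theorem \cite{Fo1973Fixed-point-scheme}, after a reduction to the case in which $X$ is smooth everywhere. First I would replace $X$ by its smooth locus $X^{\mathrm{sm}}$. Since $G$ acts on $X$ by automorphisms and automorphisms preserve smoothness, $X^{\mathrm{sm}}$ is a $G$-invariant open subvariety of $X$ containing $x$, and the fixed point scheme of $X^{\mathrm{sm}}$ is the open subscheme $X^G \cap X^{\mathrm{sm}}$ of $X^G$ around $x$. As both assertions are local at $x$, it therefore suffices to treat the case where $X$ itself is smooth.

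For the smoothness I would invoke Fogarty's theorem directly: over $\CC$ a reductive group is linearly reductive, and Fogarty shows that the fixed point scheme of a linearly reductive group acting on a smooth scheme is again smooth. This yields that $X^G$ is smooth at $x$; in particular $X^G$ is reduced in a neighbourhood of $x$, so that there the fixed point \emph{scheme} and the fixed point \emph{variety} coincide, and the tangent space computation below applies to $X^G$ as defined.

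For the identity $T_x(X^G) = (T_x X)^G$ I would argue with the functor of points of the fixed point scheme. By definition $X^G$ represents the functor sending a scheme $S$ (with trivial $G$-action) to the set of $G$-equivariant morphisms $S \to X$. A tangent vector of $X^G$ at $x$ is a $\CC[\epsilon]/(\epsilon^2)$-point of $X^G$ lying over $x$, that is, a vector $v \in T_x X$ whose associated morphism $\Spec \CC[\epsilon]/(\epsilon^2) \to X$ is $G$-equivariant. Since $G$ acts trivially on the dual numbers, this condition says precisely that $v$ is fixed under the linear representation of $G$ on $T_x X$ obtained by differentiating the action at the fixed point $x$. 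Hence $T_x(X^G) = (T_x X)^G$.

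The substantive input is Fogarty's smoothness statement; the rest is a formal manipulation of the fixed point functor. As a self-contained alternative in characteristic zero one could instead linearize the action on the complete local ring $\hat{\OO}_{X,x} \cong \CC[[t_1, \ldots, t_d]]$, using linear reductivity to choose $G$-stable coordinates, and thereby identify the completed fixed locus with the linear subspace $(T_x X)^G \subseteq T_x X$; this simultaneously gives smoothness and the tangent space identity, but citing Fogarty is more economical. I expect this linearization step---equivalently, the proof of Fogarty's theorem itself---to be the only real obstacle, since the reduction to the smooth locus and the tangent space computation are both routine.
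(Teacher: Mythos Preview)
Your argument is correct, but it is organized differently from the paper's. The paper does not pass to the smooth locus; instead it works directly at the fixed point $x$ via tangent cones. It cites \cite[Theorem~5.2]{Fo1973Fixed-point-scheme}, which asserts $C_x(X^{(G)}) = (C_x X)^{(G)}$ for reductive $G$, and then observes that smoothness of $X$ at $x$ gives $(C_x X)^{(G)} = (T_x X)^{(G)}$, forcing all inclusions in a small commutative square to be equalities; this simultaneously yields smoothness of $X^{(G)}$ at $x$ and the tangent space identity. Your route --- reduce to globally smooth $X$, invoke Fogarty's headline smoothness theorem, and then compute $T_x(X^{(G)})$ by the functor of points --- is equally valid and arguably more modular, separating the ``hard'' input from the formal tangent space calculation. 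The paper's approach has the minor advantage of staying local at $x$ throughout and of making explicit why the scheme $X^{(G)}$ agrees with its reduction $X^G$ near $x$ (since smoothness implies reducedness), a point you also address. Either packaging is fine.
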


\begin{proof} Let us denote by $X^{(G)}\subseteq X$ the largest closed 
subscheme which is fixed under $G$,  see \cite[\S2]{Fo1973Fixed-point-scheme}) 
for details. It then follows that $X^{G}=(X^{(G)})_{\text{\it red}}$. For $x \in X$ 
we denote by $C_{x}X$ the tangent 
cone in $x$, i.e. $C_{x}X = \Spec\grad\OO_{X,x}$ where 
$\grad\OO_{X,x}:=\bigoplus_{i\geq 0}\mm_{x}^{i}/\mm_{x}^{i+1}$ is
the associated graded algebra with respect to the maximal ideal 
$\mm_{x}\subseteq \OO_{X,x}$. By definition, 
there is a closed immersion $\mu_{x}\colon C_{x}X \into T_{x}X$, and $X$ is 
smooth at $x$ if and only if $\mu_{x}$ is an isomorphism.
If $x \in X^{G}$ is a fixed point we obtain the following commutative diagram of schemes where all morphisms are closed immersions
\[
 	\xymatrix{
		C_{x}(X^{(G)}) \ar[r]^-{\subseteq} 
		\ar[d]_-{\subseteq} & T_{x} ( X^{(G)})
		\ar[d]_-{\subseteq} \\
		(C_{x}X)^{(G)} \ar[r]^-{\subseteq} &
		(T_{x} X)^{(G)} \, .
	}
\]
It is shown in
\cite[Theorem~5.2]{Fo1973Fixed-point-scheme} that for a reductive group $G$ we have $C_{x}(X^{(G)}) = (C_{x}X)^{(G)}$. If $X$ is smooth at $x$ we get 
$(C_{x}X)^{(G)} = (T_{x}X)^{(G)}$. Hence all morphisms in the diagram above are isomorphisms.
In particular $X^{(G)}$ is smooth at $x$ and thus $X^G = (X^{(G)})_{\text{\it red}}$ is smooth at $x$. Moreover, 
we get $T_{x} ( X^{G}) = (T_{x} X)^G$. 
\end{proof}

\begin{remark}\label{rem: dimension estimate}
Assume that $(\ZZ/p\ZZ)^{n}$ acts faithfully on a smooth quasi-projective 
variety $X$. If $p$ does not divide $\chi(X)$, then $\dim X \geq n$.

In fact, by Proposition~\ref{prop:EulerChar} there is a fixed point $x \in X$, and the action of $(\ZZ/p\ZZ)^{n}$ 
on the tangent space $T_{x}X$ is faithful \cite[Lemma~2.2]{KrSt2013On-automorphisms-o}, hence $n \leq \dim T_{x}X = \dim X$.
\end{remark}

\ps
\section{Proof of Theorem~\ref{thm:MainThm}
and Theorem~\ref{thm:Centralizer}}

Let us introduce the following terminology for this section.
Let $X$ be a variety and let $M \subseteq \Aut(X)$ be a subset. A map
$\eta \colon M \to Z$ into a variety $Z$ is called \emph{regular}  
if for every morphism $\nu \colon A \to \Aut(X)$ with image in $M$, the
composition $\eta \circ \nu \colon A \to Z$ is a morphism of varieties.

\subsection{Semi-invariant functions}

\begin{lemma}
\label{lem:Semi-Invariants}
Let $X$ be an irreducible normal variety, and let 
$f\in\OO(X)$ be a non-constant function such that the zero set $Z:=\VVV_{X}(f)\subset X$ is an irreducible hypersurface. Let $G \subseteq \Aut(X)$ be a connected subgroup which stabilizes $Z$.
Then we have the following.
\begin{enumerate}[label=$(\arabic*)$]
	\item \label{Eq:propery1}
	The function $f$ is a $G$-semi-invariant with character 
	$\chi\colon G \to \Cst$, i.e. $f(gx) = \chi(g)^{-1}\cdot f(x)$ 
	for all $x \in X$ and $g\in G$.
	\item \label{Eq:propery2} 
	The character $\chi \colon G \to \CC^\ast$ is a regular map. 
\end{enumerate}
\end{lemma}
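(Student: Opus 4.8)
The plan is to reduce both assertions to the single statement that for every $g\in G$ the pulled-back function $f\circ g$ is a constant multiple of $f$, and then to read off the character value from that constant. First I would observe that, because $G$ stabilises $Z$, for any $g\in G$ the zero set of $f\circ g$ is $g^{-1}(Z)=Z$; hence both $f\circ g$ and $f$ are nowhere vanishing on the irreducible dense open set $X\setminus Z$, so their ratio $(f\circ g)/f$ is an invertible regular function there. The heart of the matter is to upgrade this \emph{unit} to an actual \emph{constant}, and this is precisely where the connectedness of $G$ must enter.

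For the upgrade I would argue in families. Fix $g\in G$. Since $G=G^\circ$ is connected, the definition of the identity component provides an irreducible variety $A$, a morphism $\nu\colon A\to\Aut(X)$ with image in $G$, and points $a_0,a_1\in A$ with $\nu(a_0)=\id$ and $\nu(a_1)=g$. Form the regular function $F\in\OO(A\times X)$, $F(a,x)=f(\nu(a)(x))$; it is regular because $\tilde\nu\colon A\times X\to X$ is a morphism. As above, $F$ and the pullback of $f$ are units on $A\times(X\setminus Z)$, so $F/f$ is a unit there. Now I would invoke the classical structure theorem for units on a product of two irreducible varieties (Rosenlicht): $(F/f)(a,x)=p(a)\,q(x)$ for units $p\in\OO(A)^{*}$ and $q\in\OO(X\setminus Z)^{*}$. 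Specialising at $a_0$, where $F(a_0,\cdot)=f$, forces $p(a_0)\,q(x)=1$, so $q$ is a nonzero constant; hence $F(a,x)=\tilde p(a)\,f(x)$ with $\tilde p:=p/p(a_0)\colon A\to\Cst$ a morphism. Evaluating at $a_1$ yields $f(gx)=\tilde p(a_1)\,f(x)$ with $\tilde p(a_1)\in\Cst$. Setting $\chi(g):=\tilde p(a_1)^{-1}$ gives assertion \ref{Eq:propery1}, and the cocycle relation coming from $f\circ(g_1g_2)=(f\circ g_1)\circ g_2$ collapses, the factors now being constant, to $\chi(g_1g_2)=\chi(g_1)\chi(g_2)$, so $\chi$ is a homomorphism into $\Cst$.

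Assertion \ref{Eq:propery2} then follows almost formally. Given any morphism $\nu\colon A\to\Aut(X)$ with image in $G$, choose a point $x_0\in X\setminus Z$, so that $f(x_0)\neq 0$. By \ref{Eq:propery1} we have $f(\nu(a)(x_0))=\chi(\nu(a))^{-1}f(x_0)$, and the left-hand side is a morphism $A\to\CC$, namely the restriction of the regular function $(a,x)\mapsto f(\nu(a)(x))$ to $A\times\{x_0\}$. Dividing by the nonzero constant $f(x_0)$ shows that $a\mapsto\chi(\nu(a))^{-1}$ is a morphism $A\to\Cst$, and composing with inversion shows $\chi\circ\nu$ is a morphism; this is exactly the regularity of $\chi$.

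The main obstacle is the step converting a unit into a constant. The purely divisor-theoretic input (normality of $X$ together with $Z$ being an irreducible hypersurface) only shows that $(f\circ g)/f$ has trivial divisor and is therefore a unit in $\OO(X)$; it cannot by itself force that unit to be scalar, since $\OO(X)^{*}/\Cst$ may be nontrivial. Genuine use of the hypothesis that $G$ is \emph{connected} is indispensable, and the clean way to deploy it is the family argument above, where the specialisation at the identity kills the $X$-dependence. The points needing care are the correct formulation of connectedness through the definition of $G^\circ$ (producing the irreducible parameter variety $A$ together with $a_0,a_1$ linking $\id$ to $g$), the irreducibility of $A\times(X\setminus Z)$ required to apply the product-unit theorem, and the scalar ambiguity in the factorisation $F/f=p\,q$, which is resolved precisely by the normalisation $\nu(a_0)=\id$.
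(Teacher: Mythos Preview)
Your argument is correct and follows the same core strategy as the paper: apply Rosenlicht's structure theorem for units on a product to the family $(a,x)\mapsto f(\nu(a)(x))/f(x)$ over $A\times(X\setminus Z)$, and use the specialisation $\nu(a_0)=\id$ to kill the $X$-dependence, exactly as you do. The only difference is organisational: the paper first invokes normality to argue via the discrete valuation ring $\OO_{X,Z}$ that each individual ratio $(f\circ g)/f$ already extends to a global unit in $\OO(X)^{*}$, and only afterwards runs the Rosenlicht/connectedness argument to force that unit into $\CC^{*}$; you bypass this preliminary step by working directly on $X\setminus Z$ and then using density to extend the scalar identity $F(a,x)=\tilde p(a)f(x)$ to all of $A\times X$. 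Your route is slightly cleaner and in fact shows that the normality hypothesis is not needed for this lemma.
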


For the proof we need the following description of the invertible functions on a product variety 
due to Rosenlicht \cite[Theorem~2]{Rosenlicht:1961aa}. We denote for any variety $X$
the group of invertible functions on $X$ by $\OO(X)^\ast$. 

\begin{lemma}
	\label{lem:Units_of_Prod}
	Let $X_1$, $X_2$ be irreducible varieties. Then 
	$\OO(X_1 \times X_2)^\ast =\OO(X_1)^\ast \cdot \OO(X_2)^\ast$.
\end{lemma}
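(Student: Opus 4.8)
The inclusion $\OO(X_1)^\ast \cdot \OO(X_2)^\ast \subseteq \OO(X_1 \times X_2)^\ast$ is clear: the pullback of an invertible function along a projection is invertible, and a product of units is a unit. The content is the reverse inclusion, and my plan is to produce an explicit factorization of a given $u \in \OO(X_1 \times X_2)^\ast$ by restricting $u$ to two transversal slices and then showing, via a divisor computation, that the correction term is trivial. Concretely, for base points $x_0 \in X_1$ and $y_0 \in X_2$ (to be specified below) I would set
\[
	a := u(\cdot, y_0) \in \OO(X_1)^\ast, \quad b := u(x_0, \cdot) \in \OO(X_2)^\ast, \quad c := u(x_0, y_0) \in \Cst ,
\]
which are units because each slice inclusion is a morphism and ring homomorphisms preserve units. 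Writing $w := c \cdot u \cdot \pr_1^\ast(a)^{-1} \cdot \pr_2^\ast(b)^{-1} \in \OO(X_1 \times X_2)^\ast$, the whole statement reduces to the claim that $w \equiv 1$, since then $u = c^{-1}\,\pr_1^\ast(a)\,\pr_2^\ast(b) \in \OO(X_1)^\ast \cdot \OO(X_2)^\ast$.

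To access divisor theory I would first pass to the normalizations $\pi_i \colon \tilde X_i \to X_i$ and then to normal complete compactifications $\tilde X_i \subseteq \overline{X}_i$, so that $\overline{X} := \overline{X}_1 \times \overline{X}_2$ is again normal, complete and irreducible. Let $\tilde u$ and $\tilde w$ denote the pullbacks of $u$ and $w$ to $\tilde X_1 \times \tilde X_2$. Since $\tilde u$ is regular and nowhere vanishing on $\tilde X_1 \times \tilde X_2$, its principal divisor on $\overline{X}$ is supported on the boundary $\overline{X} \setminus (\tilde X_1 \times \tilde X_2)$; a dimension count using irreducibility shows that every prime divisor there has the form $D \times \overline{X}_2$ for a boundary divisor $D$ of $\overline{X}_1$, or $\overline{X}_1 \times E$ for a boundary divisor $E$ of $\overline{X}_2$. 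Thus
\[
	\operatorname{div}_{\overline{X}}(\tilde u) = \sum_D a_D\,(D \times \overline{X}_2) + \sum_E b_E\,(\overline{X}_1 \times E)
\]
with integers $a_D, b_E$ depending only on $u$.

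The key step is then to choose the base points well. Restricting the divisor of $\tilde u$ to a general slice recovers the slice divisor, so there is a dense open set of points $\tilde y_0 \in \tilde X_2$ for which $\operatorname{div}_{\overline{X}_1}\big(\tilde u(\cdot, \tilde y_0)\big) = \sum_D a_D\, D$, and symmetrically for $\tilde X_1$. I would pick $\tilde x_0, \tilde y_0$ in these loci and inside the isomorphism locus of the normalizations, and set $x_0 = \pi_1(\tilde x_0)$, $y_0 = \pi_2(\tilde y_0)$. With this choice $\pi_1^\ast(a) = \tilde u(\cdot, \tilde y_0)$ and $\pi_2^\ast(b) = \tilde u(\tilde x_0, \cdot)$ have divisors $\sum_D a_D\, D$ and $\sum_E b_E\, E$ on $\overline{X}_1$ and $\overline{X}_2$ respectively. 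Consequently
\[
	\operatorname{div}_{\overline{X}}(\tilde w) = \operatorname{div}_{\overline{X}}(\tilde u) - \sum_D a_D\,(D \times \overline{X}_2) - \sum_E b_E\,(\overline{X}_1 \times E) = 0 ,
\]
so $\tilde w$ is a rational function on the complete, normal, irreducible variety $\overline{X}$ with trivial divisor, hence a nonzero constant; evaluating at $(\tilde x_0, \tilde y_0)$ gives $\tilde w \equiv 1$. Since $\pi_1 \times \pi_2$ is surjective, $w \equiv 1$, which is what remained to be shown.

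The main obstacle is the input used in the previous paragraph, namely that for a general slice the order of vanishing of $\tilde u(\cdot, \tilde y_0)$ along a boundary divisor $D$ equals the multiplicity $a_D$ of $\tilde u$ along $D \times \overline{X}_2$ (the generic restriction of a Weil divisor), together with the bookkeeping needed to make divisor theory available: reducing to normal factors, choosing a normal compactification of the product, and checking that the selected base points descend to $X_1$ and $X_2$ along the normalizations. Once these routine but somewhat technical points are in place, the computation $\operatorname{div}_{\overline{X}}(\tilde w) = 0$ finishes the proof cleanly.
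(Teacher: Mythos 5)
Your argument is correct, but it follows a genuinely different route from the paper: the paper gives no proof of this lemma at all, quoting it directly as Rosenlicht's theorem on units \cite[Theorem~2]{Rosenlicht:1961aa}, whereas you reprove that theorem from scratch. Your scheme --- normalize $u$ by its two slice restrictions through a base point, compactify the normalizations, observe that $\operatorname{div}_{\overline{X}}(\tilde u)$ lives on the boundary and (by your dimension count, which is valid: a prime divisor of $\overline{X}_1 \times \overline{X}_2$ inside $(\overline{X}_1\setminus\tilde X_1)\times \overline{X}_2$ must dominate a codimension-one component of the boundary with full fibers) splits into ``horizontal'' and ``vertical'' components, then kill the divisor of $\tilde w$ by a generic choice of base point --- is sound, and the conclusion that a rational function with trivial divisor on a complete normal irreducible variety is a nonzero constant is exactly right. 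The one step you flag, that for generic $\tilde y_0$ one has $\operatorname{ord}_{D}\bigl(\tilde u(\cdot,\tilde y_0)\bigr)=\operatorname{ord}_{D\times\overline{X}_2}(\tilde u)$, is indeed the technical heart, and it does go through: a uniformizer $t$ of the DVR $\OO_{\overline{X}_1,D}$ pulls back to a uniformizer of $\OO_{\overline{X},\,D\times\overline{X}_2}$ (the ideal of $D\times\overline{X}_2$ is the extension of the ideal of $D$), so writing $\tilde u = t^{a_D}v$ with $v$ a unit in that DVR, the slice order equals $a_D$ for all $\tilde y_0$ outside the proper closed locus where $v^{\pm1}$ fails to be defined and nonzero at the generic point of $D\times\{\tilde y_0\}$; running this over the finitely many codimension-one boundary components of both factors (allowing multiplicity $0$, which also rules out unexpected components in the slice divisor) gives the dense open set you need. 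Three bookkeeping points deserve explicit mention: normality of $\overline{X}_1\times\overline{X}_2$ holds because we are over $\CC$ (a normal variety over an algebraically closed field of characteristic zero is geometrically normal, and products of geometrically normal varieties are normal --- this can fail over imperfect fields); existence of a complete compactification of an arbitrary variety requires Nagata's theorem (for the paper's applications, where $X$ is quasi-projective, a projective closure suffices); and, as you note, the slices $a=u(\cdot,y_0)$, $b=u(x_0,\cdot)$ are already units on $X_1$, $X_2$ themselves, so the normalizations are needed only to run the divisor computation and the surjectivity of $\pi_1\times\pi_2$ descends $\tilde w\equiv 1$ to $w\equiv 1$ --- your restriction to the isomorphism locus of the normalization is harmless but unnecessary. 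What the paper's route buys is brevity; what yours buys is a self-contained proof, and it is in fact close in spirit to Rosenlicht's original divisor-theoretic argument.
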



\begin{proof}[Proof of Lemma~\ref{lem:Semi-Invariants}]
	\ref{Eq:propery1}
	Since $X$ is normal, the local ring $R = \OO_{X, Z}$ is a discrete 
	valuation ring.
	Let $\mm$ be the maximal ideal of $R$.
	By assumption, $f R = \mm^k$ for some $k > 0$. 
	Since $\mm$ is stable under $G$,
	the same is true for $\mm^k$. Hence, for every $g \in G$, 
	there exists a unit $r_{g} \in R^\ast$ such that $gf = r_{g}\cdot f$ in $R$.
	Since $f$ and $gf$ have no zeroes in $X \setminus Z$, it follows that $r_{g}$ 
	is regular and nonzero in $X\setminus Z$.
	Moreover, the open set where $r_{g}\in R$ is defined and nonzero meets 
	$Z$, hence $r_{g}$ is a regular invertible function
	on $X$. Consider the map 
	\[
		\chi \colon G \to \OO(X)^\ast \, ,  \  g \mapsto r_g \, .
	\]
	For all $x \in X \setminus Z$, $g \in G$ we get
	$f(g x) = \chi(g)(x)^{-1} f(x)$, and $f(g x)$, $f(x)$ are both nonzero. Since
	for each morphism $\nu \colon A \to \Aut(X)$ with image in $G$, the map
	$\tilde\nu\colon A \times X \to X$, $(a, x) \mapsto \nu(a)(x)$ is a morphism, we see that
	\[
		A \times (X \setminus Z) \to \CC^\ast \, , \ 
		(a, x) \mapsto \chi(\nu((a))(x) = f(x) \cdot f(\tilde\nu(a,x))^{-1}
	\]
	is a morphism. If $A$ is irreducible, then by Lemma~\ref{lem:Units_of_Prod}
	there exist invertible functions $q \in \OO(A)^\ast$ and $p \in \OO(X \setminus Z)^\ast$
	such that $\chi(\nu((a))(x) = q(a) p(x)$. If, moreover, $\nu(a_0) = e \in G$ for some $a_0 \in A$,
	then 
	\[
		1 = r_e(x) = \chi(\nu(a_0))(x) = q(a_0) p(x) \quad \textrm{for all $x \in X \setminus Z$} \, ,
	\]
	i.e. $p$ is a constant invertible function. 
	In this case, the composition $\chi \circ \nu \colon A \mapsto \OO(X)^\ast$
	has image in $\CC^\ast$.
	Since $G$ is connected, this implies that the whole image of $\chi$ lies in $\CC^\ast$.
	\ps
	\ref{Eq:propery2} 
	Choose $x_0 \in X \setminus Z$. As before,
	for each morphism $\nu\colon A \to \Aut(X)$ with image in $G$, the map
	\[
		A \to \CC^\ast, \quad a \mapsto \chi(\nu(a)) = f(x_0)\cdot 
		f(\nu(a)(x_0))^{-1}
	\]
	is also a morphism.
\end{proof}

\begin{lemma}
	\label{lem:InvariantHypersurfaces}
	Let $X$ be an irreducible normal variety, and let $G \subseteq \Aut(X)$ be a connected
	subgroup. Assume that  $f_{1},\ldots,f_{n} \in \OO(X)$ have
	the following properties.
	\begin{enumerate}[label=$(\arabic*)$]
		\item $Z_{i}:=\VVV_{X}(f_{i})$, $i=1,\ldots,n$, 
		are irreducible $G$-invariant hypersurfaces;
		\item $\bigcap_{i}Z_{i}$ contains an isolated point.
	\end{enumerate}
	If $\chi_{i}\colon G \to \Cst$ is the character of $f_{i}$ (see Lemma~\ref{lem:Semi-Invariants}), 
	then
	\[
		\chi := (\chi_1, \ldots, \chi_n) \colon G \to (\CC^\ast)^n
	\] 
	is a regular homomorphism with finite kernel.
\end{lemma}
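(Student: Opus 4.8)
The plan is to split the statement into an essentially formal part (that $\chi$ is a regular homomorphism) and the geometric heart (that $\ker\chi$ is finite), the latter being where the isolated intersection point is used.

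First I would dispose of the homomorphism and regularity claims. By Lemma~\ref{lem:Semi-Invariants} each $f_i$ is a $G$-semi-invariant with a \emph{regular} character $\chi_i\colon G\to\Cst$. Comparing $f_i((gh)x)=\chi_i(gh)^{-1}f_i(x)$ with $f_i(g(hx))=\chi_i(g)^{-1}\chi_i(h)^{-1}f_i(x)$ shows that each $\chi_i$ is a group homomorphism, hence so is $\chi=(\chi_1,\ldots,\chi_n)$. Regularity of $\chi$ is immediate, since a map into $(\Cst)^n$ is a morphism exactly when each of its components is, and these are regular by Lemma~\ref{lem:Semi-Invariants}\,\ref{Eq:propery2}. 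It then remains to bound $K:=\ker\chi$.

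Next I would exploit the isolated point. Packaging the $f_i$ into $F=(f_1,\ldots,f_n)\colon X\to\CC^n$, semi-invariance reads $F(gx)=\chi(g)^{-1}\cdot F(x)$ for the coordinatewise action of $(\Cst)^n$. Let $p$ be an isolated point of $\bigcap_i Z_i=F^{-1}(0)$. For every $g\in G$ we get $F(gp)=\chi(g)^{-1}\cdot F(p)=0$, so the whole orbit $G\cdot p$ lies in $\bigcap_i Z_i$. Since $G$ is connected, any $g\in G$ lies in the image of some morphism $\nu\colon A\to\Aut(X)$ from an irreducible variety $A$ with image in $G$ and $e\in\nu(A)$ (Proposition~\ref{prop:Id_component}); then $a\mapsto\nu(a)(p)$ is a morphism $A\to X$ whose image is a connected subset of $\bigcap_i Z_i$ containing both $p$ and $gp$. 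As $p$ is isolated, $\{p\}$ is a connected component of $\bigcap_i Z_i$, forcing $gp=p$. Thus \emph{$G$ fixes $p$}, and in particular $K$ acts on $\OO_{X,p}$ and on its completion $\widehat{\OO}_{X,p}$, preserving the maximal ideal $\mm_p$.

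The hard part is the finiteness of $K$, and this is precisely where the isolated point does its real work. Since $p$ is isolated in $\VVV_X(f_1,\ldots,f_n)$, the ideal $(f_1,\ldots,f_n)$ is $\mm_p$-primary, so the continuous homomorphism $\CC[[t_1,\ldots,t_n]]\to\widehat{\OO}_{X,p}$, $t_i\mapsto f_i$, is module-finite; let $P$ denote its image. Because $X$ is normal and excellent, $\widehat{\OO}_{X,p}$ is a domain, so $\operatorname{Frac}(\widehat{\OO}_{X,p})$ is a \emph{finite} field extension of $\operatorname{Frac}(P)$. Now every $g\in K$ satisfies $g^{\ast}f_i=f_i$, hence acts on $\widehat{\OO}_{X,p}$ by a $P$-algebra automorphism and so induces an element of $\Aut\bigl(\operatorname{Frac}(\widehat{\OO}_{X,p})/\operatorname{Frac}(P)\bigr)$, a finite group. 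Finally $K\to\Aut(\widehat{\OO}_{X,p})$ is injective: an automorphism of $X$ acting trivially on $\widehat{\OO}_{X,p}$ acts trivially on $\OO_{X,p}$, hence on $\CC(X)$, hence agrees with $\id$ on a dense open set and therefore on all of $X$ by separatedness. Composing, $K$ embeds into a finite group and is thus finite. The one point to verify carefully is the module-finiteness of $\widehat{\OO}_{X,p}$ over $\CC[[f_1,\ldots,f_n]]$ and the consequent finiteness of the field extension; granting that, the finiteness of $\ker\chi$ follows formally.
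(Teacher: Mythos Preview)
Your argument is correct, but it takes a noticeably longer route than the paper's. The paper packages the $f_i$ into the same map $F=(f_1,\ldots,f_n)\colon X\to\AA^n$, lets $Y=\overline{F(X)}$, and observes that the isolated point in $F^{-1}(0)$ forces $F\colon X\to Y$ to be generically finite, so $\CC(X)/\CC(Y)$ is a finite field extension. Since each $g\in K=\ker\chi$ satisfies $g^\ast f_i=f_i$, the group $K$ acts on $\CC(X)$ fixing $\CC(Y)$, and faithfully because $X$ is irreducible; hence $K\hookrightarrow\Aut_{\CC(Y)}(\CC(X))$ is finite. That is the whole proof.

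Your version reaches the same endpoint via completions: you first prove that $G$ fixes $p$ (a pleasant but unnecessary step), then embed $K$ into $\Aut_{\operatorname{Frac}(P)}\bigl(\operatorname{Frac}(\widehat{\OO}_{X,p})\bigr)$ using that $(f_1,\ldots,f_n)$ is $\mm_p$-primary. This is the local/formal analogue of the paper's global function-field argument, and the module-finiteness you flag does hold (the $(f_i)$-adic and $\mm_p$-adic topologies on $\widehat{\OO}_{X,p}$ coincide, so topological Nakayama applies). The cost is the extra fixed-point argument and the appeal to excellence to know $\widehat{\OO}_{X,p}$ is a domain; the paper sidesteps both by staying with $\CC(X)$, where $K$ already acts without needing $p$ to be fixed. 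Conceptually the two embeddings $K\hookrightarrow\Aut_{\CC(Y)}(\CC(X))$ and $K\hookrightarrow\Aut_{\operatorname{Frac}(P)}(\operatorname{Frac}(\widehat{\OO}_{X,p}))$ are the same idea seen generically versus formally at $p$.
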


\begin{proof}
Let $G$ act on $\AA^n$ by
\[
	g (a_1, \ldots, a_n) := 
	(\chi_1(g)^{-1}\cdot a_1, \ldots, \chi_n(g)^{-1}\cdot a_n).
\]
Then the map $f := (f_1,\ldots,f_n) \colon X \to \AA^n$
is $G$-equivariant. 
Let $Y \subseteq \AA^n$ be the closure of $f(X)$.
By assumption, $f^{-1}(0) = \bigcap_i Z_i$ 
contains an isolated point, hence $f\colon X \to Y$
has finite degree, i.e. the field extension 
$\CC(X) \supset \CC(Y)$ is finite. This implies that the kernel 
$K$ of $\chi \colon G \to (\CC^\ast)^n$ is finite, 
because $K$ embeds into $\Aut_{\CC(Y)}(\CC(X))$. 
By Lemma~\ref{lem:Semi-Invariants}, $\chi$ is regular.
\end{proof}

\subsection{Another centralizer result}
For an irreducible normal variety $X$, 
we denote by $\CH^1(X)$ the first Chow group, i.e.
the free group of integral Weil divisors modulo linear equivalence (see \cite[\S6, Chp. II]{Ha1977Algebraic-geometry}).
\begin{proposition}
	\label{prop:CentralizerOtherVersion}
	Let $X$ be an irreducible
	normal variety of dimension $n$ such that $\CH^1(X)$ is finite.
	Assume that for a prime $p$ the group 
	$(\ZZ / p \ZZ)^n$ acts faithfully on $X$ with a (not necessarily unique) 
	fixed point $x_0$ which is a smooth point of $X$.
	Then $G := \Cent_{\Aut(X)}((\ZZ / p\ZZ)^n)$ 
	is a closed subgroup of $\Aut(X)$ and the identity component $G^\circ$ is a closed
	torus of dimension $\leq n$.
\end{proposition}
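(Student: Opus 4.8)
The plan is to realize $G^{\circ}$ as a subgroup of a torus by producing, for each of the $n$ coordinate directions at a suitable fixed point, a globally defined semi-invariant function, and then to feed these functions into Lemma~\ref{lem:InvariantHypersurfaces}. Throughout write $\Gamma := \Zpn$, which is reductive since it is finite. First I would note that $G := \Cent_{\Aut(X)}(\Gamma)$ is closed by the Example following Lemma~\ref{lem:basic_properties_of_the_topology}, and I would analyze the action at the given smooth fixed point $x_{0}$. By \cite[Lemma~2.2]{KrSt2013On-automorphisms-o} the tangent representation of $\Gamma$ on $T_{x_{0}}X$ is faithful, so it splits as $T_{x_{0}}X = L_{1}\oplus\cdots\oplus L_{n}$ into one-dimensional eigenspaces whose characters $\mu_{1},\dots,\mu_{n}$ form a basis of $\Hom(\Gamma,\Cst)\cong\Zpn$. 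In particular $(T_{x_{0}}X)^{\Gamma}=0$, and since $T_{x_{0}}(X^{\Gamma}) = (T_{x_{0}}X)^{\Gamma}$ by Proposition~\ref{prop:Fogarty}, the point $x_{0}$ is an isolated smooth point of $X^{\Gamma}$.

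Next I would show that $G^{\circ}$ fixes $x_{0}$ and build the required hypersurfaces. For $g\in G^{\circ}$, choose an irreducible variety $V$ and a morphism $\nu\colon V\to\Aut(X)$ with image in $G$ and $e,g\in\nu(V)$; then $v\mapsto\nu(v)(x_{0})$ is a morphism $V\to X$ with irreducible image contained in $X^{\Gamma}$ (each $\nu(v)$ commutes with $\Gamma$) and containing the isolated point $x_{0}$, hence equal to $\{x_{0}\}$, so $g(x_{0})=x_{0}$. For each $i$ set $\Gamma_{i}:=\bigcap_{j\neq i}\ker\mu_{j}$, which is isomorphic to $\ZZ/p\ZZ$ and on which $\mu_{i}$ is nontrivial while each $\mu_{j}$ ($j\neq i$) is trivial; thus $(T_{x_{0}}X)^{\Gamma_{i}}=\bigoplus_{j\neq i}L_{j}$ has dimension $n-1$. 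Applying Proposition~\ref{prop:Fogarty} to $\Gamma_{i}$ shows $X^{\Gamma_{i}}$ is smooth of dimension $n-1$ at $x_{0}$, so its unique component $W_{i}$ through $x_{0}$ is an irreducible hypersurface with $T_{x_{0}}W_{i}=\bigoplus_{j\neq i}L_{j}$. As $G^{\circ}$ fixes $x_{0}$ and commutes with $\Gamma_{i}$, it stabilizes $X^{\Gamma_{i}}$ and hence $W_{i}$; and since $\bigcap_{i}T_{x_{0}}W_{i}=0$, the point $x_{0}$ is isolated in $\bigcap_{i}W_{i}$.

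To turn the $W_{i}$ into zero sets of regular functions I would invoke the finiteness of $\CH^{1}(X)$: the prime divisor class $[W_{i}]$ is torsion, so $m_{i}W_{i}=\div(f_{i})$ for some $m_{i}>0$ and $f_{i}\in\CC(X)^{\ast}$; as this divisor is effective and $X$ is normal, $f_{i}\in\OO(X)$ is non-constant with $\VVV_{X}(f_{i})=W_{i}$. Then Lemma~\ref{lem:InvariantHypersurfaces}, applied to the connected group $G^{\circ}$ and to $f_{1},\dots,f_{n}$, yields a regular homomorphism $\chi=(\chi_{1},\dots,\chi_{n})\colon G^{\circ}\to(\Cst)^{n}$ with finite kernel. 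For any injective morphism $\eta\colon V\to\Aut(X)$ with image in $G^{\circ}$, the composite $\chi\circ\eta$ is a morphism of varieties with finite fibres, whence $\dim V=\dim\overline{(\chi\circ\eta)(V)}\leq n$; so $\dim G^{\circ}\leq n$ is finite, and Theorem~\ref{thm:CharacterizationAlgGrps} makes $G^{\circ}$ an algebraic group for which $\chi$ is a morphism of algebraic groups.

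Finally I would identify $G^{\circ}$ as a torus. Since $\chi$ has finite kernel, $G^{\circ}/\ker\chi$ embeds into the torus $(\Cst)^{n}$ and is affine, hence $G^{\circ}$ is affine; being connected, its commutator subgroup is connected and maps into $[(\Cst)^{n},(\Cst)^{n}]=\{e\}$, so it lies in the finite group $\ker\chi$ and is trivial, making $G^{\circ}$ commutative. A connected commutative affine group is a product of a torus with a unipotent group, and the unipotent factor maps trivially to $(\Cst)^{n}$ since $\Hom(\GG_{a},\GG_{m})=0$, so it lies in $\ker\chi$ and is trivial. Thus $G^{\circ}$ is a torus of dimension $\leq n$, closed by Proposition~\ref{prop:Id_component}~\ref{prop:Id_component_3}. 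I expect the main obstacle to be exactly the middle step: combining the local Fogarty description of the fixed loci $X^{\Gamma_{i}}$, the transversality $\bigcap_{i}T_{x_{0}}W_{i}=0$, and the finiteness of $\CH^{1}(X)$ to pass from the divisors $W_{i}$ to genuine global semi-invariants $f_{i}$ on $X$.
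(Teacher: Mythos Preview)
Your proof is correct and follows essentially the same route as the paper's: your subgroups $\Gamma_{i}=\bigcap_{j\neq i}\ker\mu_{j}$ are exactly the cyclic groups $\langle\sigma_{i}\rangle$ the paper uses, so the hypersurfaces $W_{i}=X^{\Gamma_{i}}$ and $Z_{i}=X^{\sigma_{i}}$ coincide, and the passage through $\CH^{1}(X)$, Lemma~\ref{lem:InvariantHypersurfaces}, and Theorem~\ref{thm:CharacterizationAlgGrps} is identical. You are more explicit than the paper in two places---showing that $G^{\circ}$ fixes $x_{0}$ (which the paper uses implicitly to conclude $Z_{i}$ is $G^{\circ}$-stable) and spelling out why a connected algebraic group with a finite-kernel homomorphism to a torus is itself a torus---but these are elaborations of the same argument rather than a different approach.
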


\begin{proof}
	By \cite[Lemma~2.2]{KrSt2013On-automorphisms-o} we get 
	a faithful representation of $\Zpn$ on $T_{x_{0}}X$, and thus we
	can find generators $\sigma_1, \ldots, \sigma_n$ 
	such that $(T_{x_0} X)^{\sigma_i}\subset T_{x_{0}}X$ is a hyperplane for each $i$, 
	and that $(T_{x_0} X)^{(\ZZ / p\ZZ)^n} = 0$.
	By Proposition~\ref{prop:Fogarty}, the hypersurface $X^{\sigma_{i}}\subset X$ 
	is smooth at $x_{0}$, 
	with tangent space
	$T_{x_{0}}(X^{\sigma_{i}}) = (T_{x_{0}}X)^{\sigma_{i}}$.
	Hence there is a unique irreducible hypersurface
	$Z_i \subseteq X$ that is contained in $X^{\sigma_i}$ and 
	contains $x_0$; thus $Z_{i}$ is $G^\circ$-stable.
	Moreover, since $(T_{x_0} X)^{(\ZZ / p\ZZ)^n} = 0$, it follows that 
	${x_0}$ is an isolated point of $\bigcap_{i} Z_{i}$.
	Since a multiple of $Z_{i}$ is zero in $\CH^{1}(X)$, 
	there exist $G^\circ$-semi-invariant 
	functions $f_{i}\in\OO(X)$ such that $\VVV_{X}(f_{i})=Z_{i}$ 
	(Lemma~\ref{lem:Semi-Invariants}), and the 
	corresponding characters $\chi_{i}$ define a
	regular homomorphism
	\[
		\chi=(\chi_{1},\ldots,\chi_{n}) \colon G^\circ \to (\CC^\ast)^n
	\]
	with a finite kernel 
	(Lemma~\ref{lem:InvariantHypersurfaces}). 
	It follows that $\dim G^{\circ}\leq n$. Indeed,
	if $\nu \colon A \to \Aut(X)$ is an injective morphism with image in 
	$G^\circ$, then 
	$\chi\circ\nu\colon A \to (\Cst)^n$ is a morphism with
	finite fibers, and so $\dim A \leq n$. This implies,
	by Theorem~\ref{thm:CharacterizationAlgGrps}, that
	$G^\circ \subseteq \Aut(X)$ is an algebraic subgroup and that 
	$\chi$
	is a homomorphism of algebraic groups with a finite kernel. 
	Hence $G^\circ$ is a torus. Since $G$ is closed in $\Aut(X)$
	the same holds for $G^\circ$, see Proposition~\ref{prop:Id_component}.
\end{proof}

\subsection{Proof of Theorem~\ref{thm:Centralizer}}
	Now we can prove Theorem~\ref{thm:Centralizer} which has the same conclusion as 
	the proposition above, but under different assumptions.
	We have to show that the assumptions of
	Proposition~\ref{prop:CentralizerOtherVersion} are satisfied.
	Since $X$ is smooth, it follows that 
	$\CH^1(X)\simeq \Pic(X)$ is finite.
	Proposition~\ref{prop:EulerChar} implies that 
	the fixed point variety  $X^{\Zpn} \subseteq X$ is non-empty.
	Now the claims follow from Proposition~\ref{prop:CentralizerOtherVersion}.
\qed

\subsection{Images of maximal tori under group isomorphisms}

	\begin{proposition}
		\label{prop:MainProp}
		Let $X$ and $Y$ be irreducible quasi-projective varieties such that 
		$\dim Y \leq \dim X=:n$. Assume that the following conditions are satisfied:
		\begin{enumerate}[label = $(\arabic*)$]
		\item $X$ is quasi-affine and toric;	
		\item $Y$ is smooth, 
		$\chi(Y) \neq 0$, and $\Pic(Y)$ is finite. 
		\end{enumerate} 
		If $\theta\colon \Aut(X)\simto\Aut(Y)$ is an isomorphism, then $\dim Y=n$, and
		$\theta(T)^\circ$ is a closed torus of dimension $n$ in $\Aut(Y)$ for
		each $n$-dimensional torus $T \subseteq \Aut(X)$.
	\end{proposition}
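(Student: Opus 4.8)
The plan is to verify, on the $Y$-side, the hypotheses of Theorem~\ref{thm:Centralizer}, and then to push the torus $T$ through $\theta$ by exploiting that $T$ is self-centralizing (Lemma~\ref{lem:centralizer_of_torus}). First I would fix a prime $p$ with $p \nmid \chi(Y)$, which exists because $\chi(Y) \neq 0$. Inside the given $n$-dimensional torus $T \subseteq \Aut(X)$ take its $p$-torsion $\mu_p \cong (\ZZ/p\ZZ)^n$. Its image $\theta(\mu_p) \cong (\ZZ/p\ZZ)^n$ acts faithfully on $Y$, so Remark~\ref{rem: dimension estimate} gives $\dim Y \geq n$; together with $\dim Y \leq n$ this yields $\dim Y = n$. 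Now $Y$ is smooth, irreducible, quasi-projective of dimension $n$ with finite $\Pic(Y)$, and carries the faithful $(\ZZ/p\ZZ)^n$-action $\theta(\mu_p)$ with $p\nmid\chi(Y)$, so Theorem~\ref{thm:Centralizer} applies: $G := \Cent_{\Aut(Y)}(\theta(\mu_p))$ is closed and $G^\circ$ is a closed torus of dimension $d \leq n$.

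Next I would transport $T$. Since $\mu_p \subseteq T$ and $T$ is abelian, $T \subseteq \Cent_{\Aut(X)}(\mu_p)$, and as $\theta$ carries centralizers to centralizers, $\theta(T) \subseteq \Cent_{\Aut(Y)}(\theta(\mu_p)) = G$. Any irreducible variety witnessing connectedness inside $\theta(T)$ also witnesses it inside $G$, so $\theta(T)^\circ \subseteq G^\circ$; in particular $\theta(T)^\circ$ lies in the torus $G^\circ$ and $\dim \theta(T)^\circ \leq n$. This already pins down the ambient algebraic structure and gives the upper bound. The entire remaining content is the \emph{single} reduction that $\theta(T) \subseteq G^\circ$, i.e.\ that the image of the whole torus lands in the identity component of $G$. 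Granting this, the proof closes cleanly: $\theta^{-1}(G^\circ)$ is abelian (a subgroup of the torus $G^\circ$) and contains $T$, hence lies in $\Cent_{\Aut(X)}(T) = T$ by Lemma~\ref{lem:centralizer_of_torus}, so $\theta^{-1}(G^\circ) = T$ and $\theta(T) = G^\circ$. Then $\theta$ restricts to an \emph{abstract} isomorphism $T \to G^\circ$ of tori, and since the $p$-torsion of an $m$-dimensional torus is $(\ZZ/p\ZZ)^m$, comparing $p$-torsion ranks forces $d = n$. Thus $\theta(T)^\circ = G^\circ$ is a closed torus of dimension $n$.

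The decisive point, and the step I expect to be the main obstacle, is exactly $\theta(T)\subseteq G^\circ$, equivalently the vanishing of the homomorphism from $\theta(T)\cong(\Cst)^n$ into the component group $G/G^\circ$. The difficulty is that $\theta$ is only an abstract group isomorphism and ignores the ind-topology, so it cannot move the connected torus $T$ into $G^\circ$ directly; the only invariant surviving an abstract isomorphism is torsion, which is why the dimension count above must route through $p$-torsion ranks. To rule out a nontrivial image in $G/G^\circ$, I would bring in the genuinely geometric input: by Proposition~\ref{prop:EulerChar} the fixed locus $Y^{\theta(\mu_p)}$ is non-empty, and by smoothness together with Proposition~\ref{prop:Fogarty} it is finite, with a faithful $\theta(\mu_p)$-action on each tangent space $T_{x_0}Y$ (\cite[Lemma~2.2]{KrSt2013On-automorphisms-o}). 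The group $G$ permutes $Y^{\theta(\mu_p)}$, $G^\circ$ fixes a point $x_0$, and on $T_{x_0}Y \cong \CC^n$ the commuting diagonalizable groups $\theta(\mu_p)$ and $G^\circ$ can be simultaneously diagonalized; faithfulness identifies $\theta(\mu_p)$ with the full $p$-torsion of the diagonal torus, whose centralizer in $\GL(T_{x_0}Y)$ is that torus. The hard part is to promote this infinitesimal statement to the global inclusion $\theta(T)\subseteq G^\circ$ inside $\Aut(Y)$, i.e.\ to show the component group $G/G^\circ$ carries no nontrivial divisible ($p$-)torsion into which the divisible group $\theta(T)$ could map.

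In summary, the non-formal core is a single inclusion $\theta(T)\subseteq G^\circ$; everything before it (the reduction to Theorem~\ref{thm:Centralizer}, the dimension count $\dim Y = n$, the containment $\theta(T)\subseteq G$, and the upper bound $\dim\theta(T)^\circ\le n$) is routine, and everything after it (self-centralizing $\Rightarrow \theta(T)=G^\circ$, torsion-rank bookkeeping $\Rightarrow \dim G^\circ = n$) is formal. The two ingredients that break the apparent circularity are the finiteness of $Y^{\theta(\mu_p)}$ coming from smoothness via Proposition~\ref{prop:Fogarty}, and the preservation of torsion rank under the abstract isomorphism $\theta$, which is what converts $\dim T = n$ into $\dim G^\circ = n$.
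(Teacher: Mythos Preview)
Your setup through the upper bound is correct and matches the paper: pick $p\nmid\chi(Y)$, set $\mu_p\subset T$, apply Remark~\ref{rem: dimension estimate} to get $\dim Y=n$, apply Theorem~\ref{thm:Centralizer} to get that $G^\circ$ is a closed torus of dimension $\leq n$, and observe $\theta(T)^\circ\subseteq G^\circ$. The gap is exactly where you locate it: the inclusion $\theta(T)\subseteq G^\circ$. You do not prove it, and your tangent-space heuristic does not close it. The component group $G/G^\circ$ is only known to be \emph{countable} (Proposition~\ref{prop:Id_component}\ref{prop:Id_component_4}), not finite, and a countable abelian group can perfectly well contain divisible subgroups (e.g.\ a Pr\"ufer group), so divisibility of $(\Cst)^n$ alone does not force the map $\theta(T)\to G/G^\circ$ to be trivial. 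Your local picture at $x_0$ only constrains the stabilizer of $x_0$, not elements of $G$ that permute the finite set $Y^{\theta(\mu_p)}$ nontrivially; promoting it to a global statement about $G/G^\circ$ is exactly what is missing, and I do not see how to do it along these lines.

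The paper does \emph{not} attempt to prove $\theta(T)\subseteq G^\circ$; instead it establishes the lower bound $\dim\theta(T)^\circ\geq n$ directly, by a flag-plus-countability argument that exploits the toric structure of $X$ rather than the geometry of $Y$. One builds a chain $\{1\}=T_0\subset T_1\subset\cdots\subset T_n=T$ with $\dim T_i=i$ such that each $T_i$ is a centralizer in $T$ of some subset of $\Aut(X)$, hence each $\theta(T_i)$ is closed in $\theta(T)$. When $X$ is not a torus, the subsets are the unipotent groups $U_1,\ldots,U_{n-i}$ supplied by Proposition~\ref{prop:QuasiAffineToric} (linearly independent characters give $T_i=\bigcap_{k\leq n-i}\ker\chi_k=\Cent_T(U_1\cup\cdots\cup U_{n-i})$); when $X$ is a torus one uses cyclic permutations in the Weyl-type subgroup $\Sn\subset\Aut(X)$. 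Then $\theta(T_i)^\circ$ is an algebraic subgroup of the torus $\theta(T)^\circ$, and since $\theta(T_i)^\circ$ has countable index in $\theta(T_i)$ while $T_i$ has uncountable index in $T_{i+1}$, one gets strict inequalities $\dim\theta(T_{i+1})^\circ>\dim\theta(T_i)^\circ$, hence $\dim\theta(T)^\circ\geq n$. This is where the hypothesis that $X$ is toric quasi-affine actually enters; in your argument it is never used beyond the existence of $T$, which is a sign that something is missing.
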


	\begin{proof}
	Let $\theta \colon \Aut(X) \to \Aut(Y)$ be an isomorphism. Since
	$\chi(Y) \neq 0$ it follows that there is a prime $p$ that does not divide 
	$\chi(Y)$.
	
	Let $n = \dim X$ and denote by 
	$T \subset \Aut(X)$ a torus of dimension $n$. 
	We have that  $T = \Cent_{\Aut(X)}(T)$ 
	(Lemma~\ref{lem:centralizer_of_torus}), and thus $\theta(T)$
	is a closed subgroup of $\Aut(Y)$.
	Let  $\mu_p \subset T$ be the subgroup generated by the elements of order 
	$p$, and let
	$G :=  \Cent_{\Aut(Y)}(\theta(\mu_p))$ which is closed in $\Aut(X)$.
	Note that $\theta(T) \subseteq G$ and that $\dim Y = n$ by Remark~\ref{rem: dimension estimate}.
	Now Theorem~\ref{thm:Centralizer}
	implies that $G^\circ$ is a closed torus of dimension $\leq n$ in $\Aut(Y)$, and 
	by Proposition~\ref{prop:Id_component} and Theorem~\ref{thm:CharacterizationAlgGrps}, 
	$\theta(T)^\circ$ is a closed connected algebraic subgroup of $G^\circ$.
	
	In order to prove that $\dim \theta(T)^\circ \geq n$
	we construct closed subgroups
	$\{1\} = T_{0} \subset T_{1} \subset T_{2}
	\subset\cdots\subset T_{n}=T$ 
	with the following properties:
	\begin{enumerate}[label=$(\roman*)$]
		\item $\dim T_{i}=i$ for all $i$;
		\item $\theta(T_{i})$ is closed in $\theta(T)$ for all $i$.
	\end{enumerate}
	It then follows that $\theta(T_{i})^{\circ}$ is a connected algebraic subgroup of 
	$\theta(T)^{\circ}$. 
	Since the index of $\theta(T_{i})^{\circ}$ in $\theta(T_{i})$ is 
	countable (Proposition~\ref{prop:Id_component}),
	but the index of $T_{i}$ in $T_{i+1}$ is not countable, we see that 
	$\dim\theta(T_{i+1})^{\circ}>\dim\theta(T_{i})^{\circ}$, 
	and so $\dim \theta(T)^{\circ}\geq n$.
	\ps
	(a) Assume first that $X$ is a torus. Then $\Aut(X)$ contains a copy 
	of the symmetric groups $\Sn$, and we can find
	cyclic permutations $\tau_{i}\in\Aut(X)$ such that 
	$T_{i}:=\Cent_{T}(\tau_{i})$ is a closed subtorus of dimension $i$, and
	$T_{i}\subset T_{i+1}$ for all $0 < i<n$. 
	It then follows that $\theta(T_{i}) = \Cent_{\theta(T)}(\theta(\tau_{i}))$ is closed
	in $\theta(T)$, and we are done.
	\ps
	(b) Now assume that $X$ is not a torus. 
	By Proposition~\ref{prop:QuasiAffineToric}
	there exist one-dimensional unipotent subgroups $U_1, \ldots, U_n$ of 
	$\Aut(X)$ normalized by $T$ such that the corresponding characters 
	$\chi_1, \ldots, \chi_n \colon T \to \CC^\ast$ are linearly independent. Since 
	\[
		\ker(\chi_i) = \{t \in T \mid
		t \circ u_i \circ t^{-1} = u_i \textrm{ for all } u_i \in U_i \}
		= \Cent_T(U_i)
	\]
	it follows that 
	\[
		T_i := \bigcap_{k=1}^{n-i} \ker(\chi_k) = 
		\Cent_T(U_1 \cup \cdots \cup U_{n-i}) 
		\subseteq T
	\]
	is a closed algebraic subgroup of $T$ of dimension $i$. 
	It follows that the image 
	$\theta(T_{i}) = \Cent_{\theta(T)}(\theta(U_{1})\cup\cdots\cup\theta(U_{n}))$ 
	is closed in $\theta(T)$, and the claim follows also in this case.
	\end{proof}
	
	\subsection{Proof of Theorem~\ref{thm:MainThm}}
	Using Proposition~\ref{prop:MainProp}, it is enough to show 
	that a smooth toric variety $Y$ with finite (and hence trivial) Picard group is quasi-affine.
	
	
	For proving this, let $\Sigma \subset N_\QQ = N \otimes_\ZZ \QQ$ 
	be the fan that describes $Y$ where $N$ is a lattice of rank $n$. Let $N' \subseteq N$
	be the sublattice spanned by $\Sigma \cap N$ and let $Y'$ be the toric variety corresponding
	to the fan $\Sigma$ in $N'_\QQ = N' \otimes_{\ZZ} \QQ$. It follows from 
	\cite[p.~29]{Fu1993Introduction-to-to} that 
	\[
		Y \simeq Y' \times (\CC^\ast)^k
	\]
	where $k = \rank N / N'$. Thus $Y'$ is a smooth toric variety with trivial Picard group.
	Hence it is enough to prove that $Y'$ is quasi-affine and therefore we can assume $k = 0$, i.e.
	$\Sigma$ spans $N_\QQ$. By \cite[Proposition in \S3.4]{Fu1993Introduction-to-to} we get
	\[
		0 = \rank \Pic(Y) = d - n
	\]
	where $d$ is the number of edges in $\Sigma$. Let $\sigma \subset N_\QQ$ be the convex cone
	spanned by the edges of $\Sigma$ and let $\sigma^\vee$ denote the dual 
	cone of $\sigma$ in $M_\QQ = M \otimes_\ZZ \QQ$ where $M = \Hom(N, \ZZ)$. 
	Since $d = n$, the edges of $\Sigma$ are linearly independent in $\NN_\QQ$ and thus
	 $\sigma$ is a simplex. From the inclusion of the cones of $\Sigma$ in $\sigma$
	we get a morphism $f \colon Y \to \Spec \CC[\sigma^\vee \cap M]$ 
	by \cite[\S1.4]{Fu1993Introduction-to-to},
	and since each cone in $\Sigma$ is a face of $\sigma$ it is locally an open immersion
	(see \cite[Lemma in \S1.3]{Fu1993Introduction-to-to}). This implies that $f$
	is quasi-finite and birational and thus by Zariski's Main Theorem 
	\cite[Corollaire~4.4.9]{Gr1961Elements-de-geomet-III}
	it is an open immersion.
	\qed

\section{Proof of the Main Theorem}

\subsection{A first characterization}

\begin{proposition}\label{prop:XandCnIsomorphic}
Let $X$ be an irreducible quasi-affine variety. If 
$\Aut(\AA^n) \stackrel{\sim}{\to} \Aut(X)$ is an isomorphism  
that maps an $n$-dimensional torus 
in $\Aut(\AA^n)$ to an algebraic subgroup, then $X \simeq \AA^n$ as a variety.
\end{proposition}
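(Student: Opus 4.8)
Write $T\subset\Aut(\AA^n)$ for the standard diagonal torus of dimension $n$, and set $H:=\theta(T)$, which is an algebraic subgroup of $\Aut(X)$ by hypothesis. The plan is to show that $H$ is an $n$-dimensional torus acting faithfully on $X$ with a dense orbit, so that $X$ becomes an $n$-dimensional toric variety, and then to transport the coordinate translation subgroups of $\Aut(\AA^n)$ in order to recognize $X$ as $\AA^n$.

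First I would pin down the structure of $H$. Since $\AA^n$ is toric with maximal torus $T$, Lemma~\ref{lem:centralizer_of_torus} gives $\Cent_{\Aut(\AA^n)}(T)=T$; applying the abstract isomorphism $\theta$ (which preserves centralizers) yields $\Cent_{\Aut(X)}(H)=H$, so $H$ is abelian. As an abstract group $H\cong(\Cst)^n$ is divisible and hence has no proper subgroup of finite index, so $H$ is connected. By Lemma~\ref{lem:OpenImmersion} and Lemma~\ref{lem:ExtensionOfAction} the $H$-action extends to the affine variety $\bar X:=\Spec\OO(X)$; since $H$ then acts faithfully on the rational module $\OO(\bar X)$, it embeds into the $\GL$ of a finite-dimensional submodule and is therefore a linear algebraic group, say $H\cong T'\times U$ with $T'$ a torus and $U$ unipotent. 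Comparing $p$-torsion subgroups, where all of $H[p]\cong(\ZZ/p\ZZ)^n$ lies in $T'$, forces $\dim T'=n$.

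The heart of the argument is to show that $U$ is trivial and that $\dim X=n$. I would first record the general fact that a connected abelian algebraic group acting faithfully on an irreducible variety has a trivial generic stabilizer: the point stabilizers are normal and generically constant, hence contained in the (trivial) kernel of the action, so $\dim X\ge\dim H\ge n$. To eliminate $U$ I would use the self-centralizing identity $\Cent_{\Aut(X)}(H)=H$ together with the fact that $U$ commutes with $T'$: a nontrivial $U$ produces a degree-zero locally nilpotent derivation of $\OO(\bar X)$ that is $T'$-invariant, and playing this against the self-centralizing property and the dimension bound forces $U=\{1\}$ and $\dim X=n$. Thus $H\cong(\Cst)^n$ acts faithfully on the $n$-dimensional irreducible quasi-affine variety $X$ with a dense orbit, exhibiting $X$ as a toric variety with maximal torus $H$. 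I expect this step, namely controlling $\dim X$ and removing the unipotent part for a variety that is a priori only quasi-affine, to be the main obstacle.

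Finally I would identify $X$ with $\AA^n$. The case in which $X$ is a torus is excluded separately, since then $\Aut(X)\cong(\Cst)^n\rtimes\GL_n(\ZZ)$ is far more rigid than $\Aut(\AA^n)$. Assuming $X$ is not a torus, I transport the coordinate translation subgroups $U_1,\dots,U_n\subset\Aut(\AA^n)$, which are normalized by $T$ with the basis characters $e_1,\dots,e_n$. Using that $H$ normalizes each $\theta(U_i)$ with a single nonzero weight, one shows that the images are genuine commuting $\GG_a$-actions on $X$, giving $n$ homogeneous $\GG_a$-actions whose characters form a basis of the character lattice. Matching these against the description of homogeneous locally nilpotent derivations in Proposition~\ref{prop.G_a-actions_on_toric_var} and the analysis in Proposition~\ref{prop:QuasiAffineToric}, the fan of $X$ must be a single smooth full-dimensional cone, i.e. the positive orthant; hence $\Spec\OO(X)\cong\AA^n$. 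Since the transported translation subgroups generate a group acting transitively on $\Spec\OO(X)$ while preserving the open subset $X$, every orbit meeting $X$ lies in $X$, and therefore $X=\Spec\OO(X)\cong\AA^n$.
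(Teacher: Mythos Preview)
Your strategy diverges from the paper's and carries a genuine gap precisely at the step you flag as the main obstacle. You want to show that $H=\theta(T)$ is an $n$-torus and that $\dim X=n$, but neither follows from what you have assembled. Abstractly $(\Cst)^n\cong(\Cst)^n\times\CC^k$ for every $k\ge 0$ (both have torsion subgroup $(\QQ/\ZZ)^n$ and torsion-free part a $\QQ$-vector space of continuum dimension), so the torsion count yields $\dim T'=n$ but says nothing about $U$. The self-centralizing identity $\Cent_{\Aut(X)}(H)=H$ does not help either: any one-parameter unipotent subgroup of $U$ already lies in $H$, so ``playing the $T'$-invariant locally nilpotent derivation against self-centralizing'' produces no contradiction. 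And you have no upper bound on $\dim X$ at this stage; without $\dim X=n$ you cannot invoke toric geometry, and even then you have not checked that $X$ is normal, nor that the characters of $H$ on the $\theta(U_i)$ are linearly independent in the character lattice (an abstract isomorphism $T\to H$ need not respect the lattices).

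The paper sidesteps all of this by working with $\theta(\Tr)$ rather than $\theta(T)$, using only that $S:=\theta(T)$ is \emph{some} algebraic group. Since $T$ acts on $\Tr$ by conjugation with a dense orbit $O$ one has $\Tr=O\circ O$; hence for any $u_0\in\theta(O)$ the morphism $S\times S\to\Aut(X)$, $(s_1,s_2)\mapsto s_1u_0s_1^{-1}\,s_2u_0s_2^{-1}$, has image exactly $U:=\theta(\Tr)$, which is therefore algebraic by Theorem~\ref{thm:CharacterizationAlgGrps}, commutative and torsion-free, hence unipotent. The crucial point is that $\Tr$ (not $T$) is self-centralizing in $\Aut(\AA^n)$, so $U$ is self-centralizing in $\Aut(X)$. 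For any $\GG_a$-action $\rho$ coming from $U$ and any invariant $f\in\OO(X)^U$, the modified action $\rho_f(s,x):=\rho(f(x)s,x)$ commutes with $U$, hence $\rho_f(1)\in U$; the assignment $f\mapsto\rho_f(1)$ injects every finite-dimensional subspace of $\OO(X)^U$ into $U$, forcing $\OO(X)^U=\CC$. A unipotent group acting on an irreducible quasi-affine variety with only constant invariants has a dense closed orbit, so $X\cong\AA^m$, and finally $m=n$ by Remark~\ref{rem: dimension estimate}. Note that $\dim X$ is determined only \emph{after} $X$ has been recognized as an affine space; your attempt to pin it down up front is exactly what creates the gap. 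In fact your last paragraph, where you finally bring in the translation subgroups, is the seed of the paper's argument --- developed directly, it makes the toric detour unnecessary.
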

 
\begin{proof}
Since all $n$-dimensional tori in $\Aut(\AA^n)$ are conjugate 
(see \cite{Bi1966Remarks-on-the-act}), all
$n$-dimensional tori are sent to algebraic subgroups of $\Aut(X)$ via $\theta$.
The standard maximal torus $T$ in $\Aut(\AA^n)$ 
acts via conjugation on the subgroup of standard translations $\Tr \subset \Aut(\AA^n)$
with a dense orbit $O \subset T$ and thus we get $\Tr = O \circ O$.

This implies that $S:=\theta(T)$ acts on $U:=\theta(\Tr)$ via conjugation
and we get $U = \theta(O) \circ \theta(O)$. Hence, for fixed $u_0 \in \theta(O) \subset U$ 
the morphism
\[
	S \times S \to \Aut(X) \, , \quad 
	(s_1, s_2) \mapsto s_1 \circ u_0 \circ s_1^{-1} \circ s_2 \circ  u_0 \circ s_2^{-1}
\]
has image equal to $U$. Now it follows
from Theorem~\ref{thm:CharacterizationAlgGrps}
that $U$ is a closed (commutative) algebraic subgroup of $\Aut(X)$ 
with no nontrivial element of finite order, hence
a unipotent subgroup.

We claim that $U$ has no non-constant 
invariants on $X$. Indeed, let $\rho \colon \GG_a \times X \to X$
be the $\GG_a$-action on $X$ coming from a nontrivial element of 
$U$. If $f \in \OO(X)^{U}$ is a $U$-invariant, then 
it is easy to see that 
\[\tag{$*$}
\rho_{f}(s,x):=\rho(f(x)\cdot s,x)
\]
is a $\GG_{a}$-action commuting with $U$. Since $U$ is self-centralizing, we 
see that $\rho_{f}(s) \in U$ for all $s \in \GG_{a}$. Moreover, formula $(*)$ shows that
for every finite dimensional subspace $V \subset \OO(X)^{U}$ the map $V \to U$, $f\mapsto \rho_{f}(1)$,
is a morphism which is injective. Indeed, 
$\rho_{f}(1) = \rho_{h}(1)$ implies that $\rho(f(x),x) = \rho(h(x),x)$ for all $x \in X$, 
hence $f(x) = h(x)$ for all $x \in X \setminus X^{\rho}$. It follows that $\OO(X)^{U}$ is finite-dimensional. Since $X$ is irreducible, 
$\OO(X)^{U}$ is an integral domain and hence equal to $\CC$, as claimed.

Since $X$ is irreducible and quasi-affine, the unipotent group $U$ has a 
dense orbit which is closed, and so
$X$ is isomorphic to an affine space $\AA^{m}$. 
Since $m$ is the maximal number such that there exists a faithful action of $(\ZZ/2\ZZ)^{m}$ on $\AA^{m}$ 
(see Remark~\ref{rem: dimension estimate}), we finally get $m=n$.
\end{proof}

If $X$ is an affine variety, then $X$ has the structure of a so-called affine ind-group, see e.g. 
\cite{Ku2002Kac-Moody-groups-t, St2013Contributions-to-a, FuKr2017On-the-Geometry-of} for more
details.
The following result is a special case of \cite[Theorem~1.1]{Kr2017Automorphism-Group}. It is an immediate
consequence of Proposition~\ref{prop:XandCnIsomorphic} above, because a homomorphism of affine
ind-groups sends algebraic groups to algebraic groups.

\begin{corollary}
	Let $X$ be an irreducible affine variety. 
	If there is an isomorphism $\Aut(X) \simeq \Aut(\AA^{n})$ of affine ind-groups,
	then $X \simeq \AA^n$ as a variety.
\end{corollary}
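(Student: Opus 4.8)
The plan is to read off the corollary from Proposition~\ref{prop:XandCnIsomorphic}, the point being that an isomorphism of \emph{affine ind-groups} carries algebraic subgroups to algebraic subgroups, and this is exactly the extra hypothesis needed to invoke that proposition. Concretely, I would verify the two hypotheses of Proposition~\ref{prop:XandCnIsomorphic}: that $X$ is irreducible quasi-affine, and that the given isomorphism sends some $n$-dimensional torus to an algebraic subgroup of $\Aut(X)$.

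First I would note that an affine variety is in particular quasi-affine, so the irreducible affine $X$ satisfies the standing hypothesis of Proposition~\ref{prop:XandCnIsomorphic}. Next, let $\theta \colon \Aut(\AA^n) \to \Aut(X)$ be the given isomorphism of affine ind-groups, which makes sense because $X$ is affine and hence $\Aut(X)$ is an affine ind-group. By definition a homomorphism of affine ind-groups restricts to a morphism of algebraic groups on any algebraic subgroup, so the image $\theta(T)$ of an algebraic subgroup $T \subseteq \Aut(\AA^n)$ is again an algebraic subgroup of $\Aut(X)$. Applying this to a standard maximal torus $T \simeq (\Cst)^n$ of $\Aut(\AA^n)$, I conclude that $\theta$ maps an $n$-dimensional torus onto an algebraic subgroup. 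Feeding this into Proposition~\ref{prop:XandCnIsomorphic} then yields $X \simeq \AA^n$ as a variety, finishing the proof.

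The one place that needs care — and thus the main (mild) obstacle — is the compatibility of two a priori different notions of ``algebraic subgroup.'' Proposition~\ref{prop:XandCnIsomorphic} uses the notion coming from the topology on $\Aut(X)$ defined in section~\ref{sec.alg_subgroups}, characterized by Theorem~\ref{thm:CharacterizationAlgGrps}, whereas the corollary invokes the affine ind-group structure on $\Aut(X)$ for affine $X$. I would point out that these coincide: a subgroup of $\Aut(X)$ that is algebraic in the ind-group sense admits a surjective morphism from an algebraic group and hence, by the equivalence \ref{thm:Characterization_G-alg} $\Leftrightarrow$ \ref{thm:Characterization_Constructible} of Theorem~\ref{thm:CharacterizationAlgGrps}, is algebraic in the sense used throughout section~\ref{sec.alg_subgroups}. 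Once this identification is granted, the transfer of the torus $T$ to an algebraic subgroup $\theta(T)$ is legitimate, and the corollary becomes an immediate consequence of Proposition~\ref{prop:XandCnIsomorphic}.
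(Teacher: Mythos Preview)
Your proposal is correct and follows exactly the same approach as the paper: the paper's proof is a single sentence noting that the corollary is an immediate consequence of Proposition~\ref{prop:XandCnIsomorphic} because a homomorphism of affine ind-groups sends algebraic groups to algebraic groups. Your additional paragraph on reconciling the two notions of ``algebraic subgroup'' is a reasonable elaboration of a point the paper leaves implicit, but it does not change the argument.
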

	
\begin{corollary}
	\label{cor:acyclicGeneral}
	Let $X$ be a smooth, irreducible quasi-projective variety such that 
	$\chi(X) \neq 0$ and $\Pic(X)$ is finite. 
	If there is an isomorphism $\Aut(\AA^n) \simeq \Aut(X)$ of abstract groups 
	and if $\dim X \leq n$, then $X \simeq \AA^n$ as a variety.
\end{corollary}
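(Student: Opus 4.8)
The plan is to reduce the statement to Proposition~\ref{prop:XandCnIsomorphic} by exhibiting an $n$-dimensional torus in $\Aut(\AA^n)$ whose image under the given isomorphism is an algebraic subgroup of $\Aut(X)$. Fix an isomorphism $\theta \colon \Aut(\AA^n) \simto \Aut(X)$. Since $\AA^n$ is affine (hence quasi-affine) and toric of dimension $n$, while $X$ is smooth with $\chi(X) \neq 0$, $\Pic(X)$ finite, and $\dim X \leq n$, Theorem~\ref{thm:MainThm} applies with $\AA^n$ in the role of the quasi-affine toric variety and $X$ in the role of the smooth variety $Y$. This yields $\dim X = n$, the fact that $X$ is quasi-affine, and that for the standard $n$-dimensional torus $T \subseteq \Aut(\AA^n)$ the identity component $S := \theta(T)^\circ$ is a closed torus of dimension $n$ in $\Aut(X)$.

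The crucial point is to upgrade this to the assertion that $\theta(T)$ \emph{itself} is algebraic, so that Proposition~\ref{prop:XandCnIsomorphic} becomes applicable. First I would observe that $S$ is an $n$-dimensional torus acting faithfully on the smooth (hence normal) irreducible $n$-dimensional variety $X$, so that $(X,S)$ is a toric variety. Granting this, Lemma~\ref{lem:centralizer_of_torus} gives $\Cent_{\Aut(X)}(S) = S$. Now $\theta(T)$ is abelian, being the image of the abelian group $T$, and it contains $S = \theta(T)^\circ$; hence every element of $\theta(T)$ commutes with $S$, so $\theta(T) \subseteq \Cent_{\Aut(X)}(S) = S \subseteq \theta(T)$. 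Thus $\theta(T) = S$ is a closed torus, in particular an algebraic subgroup of $\Aut(X)$, and Proposition~\ref{prop:XandCnIsomorphic} immediately gives $X \simeq \AA^n$.

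The step I expect to be the main obstacle is verifying that the faithful action of the $n$-torus $S$ on the irreducible $n$-dimensional variety $X$ makes $X$ toric, i.e.\ that $S$ has a dense orbit, equivalently that the generic stabilizer is finite. I would argue this as follows. By upper semicontinuity of the stabilizer dimension there is a dense open subset on which the orbit dimension attains its maximum $d$, so that $\dim S_x = n - d$ there. If $d < n$, then for generic $x$ the identity component $S_x^\circ$ is a subtorus of positive dimension $n-d$. The decisive observation is that the subtori of $S$ correspond to the saturated sublattices of its cocharacter lattice $\ZZ^n$, of which there are only countably many; writing them as $S_1', S_2', \ldots$, each generic $x$ lies in some fixed-point set $X^{S_j'}$, so a dense open subset of $X$ is contained in $\bigcup_j X^{S_j'}$. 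Since an irreducible variety of positive dimension over the uncountable field $\CC$ cannot be a countable union of proper closed subsets, some $X^{S_j'}$ equals $X$, meaning the nontrivial subtorus $S_j'$ acts trivially --- contradicting faithfulness. Hence $d = n$ and $S$ has a dense orbit, which is precisely the input needed to invoke Lemma~\ref{lem:centralizer_of_torus}.
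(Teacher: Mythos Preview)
Your proof is correct, and it reaches the key equality $\theta(T)=S$ by a different route than the paper. After the first application of Theorem~\ref{thm:MainThm}, the paper applies that theorem a \emph{second} time, now to $\theta^{-1}\colon\Aut(X)\simto\Aut(\AA^n)$ (using that $X$ has become quasi-affine and toric, while $\AA^n$ satisfies the smoothness, Euler-characteristic, and Picard hypotheses); this yields that $\theta^{-1}(S)^\circ$ is an $n$-torus in $\Aut(\AA^n)$, and the chain $\theta^{-1}(S)^\circ\subseteq\theta^{-1}(S)\subseteq T$ forces $\theta^{-1}(S)=T$. Your argument is more economical: once $(X,S)$ is toric, Lemma~\ref{lem:centralizer_of_torus} gives $\Cent_{\Aut(X)}(S)=S$, and since $\theta(T)$ is abelian and contains $S$ you get $\theta(T)\subseteq S\subseteq\theta(T)$ directly, avoiding the second heavy invocation of Theorem~\ref{thm:MainThm}. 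One remark: under the paper's working definition of ``toric'' (a normal irreducible variety with a faithful torus action of maximal dimension), your final paragraph is not formally required; it is, however, exactly what justifies the dense-orbit claim used inside the proof of Lemma~\ref{lem:centralizer_of_torus}, so including it is reasonable.
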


\begin{proof}
	Theorem~\ref{thm:MainThm} 
	shows that for an isomorphism $\theta \colon \Aut(\AA^n) \simto \Aut(X)$ and any
	$n$-dimensional torus $T \subseteq \Aut(\AA^n)$, the identity component of the image
	$S:= \theta(T)^\circ$ is a closed torus of dimension $n$ in $\Aut(X)$, 
	$\dim X = n$, and $X$ is quasi-affine.
	Thus we can apply Theorem~\ref{thm:MainThm} to $\theta^{-1} \colon \Aut(X) \simto \Aut(\AA^n)$
	and get that $\theta^{-1}(S)^\circ$ is a closed torus of dimension $n$ in $\Aut(\AA^n)$. Since
	\[
		\theta^{-1}(S)^\circ \subseteq \theta^{-1}(S) \subseteq T \, ,
	\]
	it follows that $\theta^{-1}(S) = T$, i.e. $\theta(T) = S$ is a closed $n$-dimensional torus in $\Aut(X)$.
	The assumptions of Proposition~\ref{prop:XandCnIsomorphic} 
	are now satisfied for the isomorphism $\theta \colon \Aut(\AA^n) \simto \Aut(X)$, 
	and the claim follows.
\end{proof}

\subsection{Proof of the Main Theorem}
	If~\ref{thm:AcyclicAndToric:Acyclic} holds, i.e.
	$X$ is a smooth, irreducible, quasi-projective variety of dimension $\leq n$ 
	such that $\chi(X) \neq 0$ and $\Pic(X)$ is finite, then the claim follows from
	Corollary~\ref{cor:acyclicGeneral}.
	\ps
	Now assume that the assumptions \ref{thm:AcyclicAndToric:Toric} 
	are satisfied, i.e., that
	$X$ is quasi-affine and toric of dimension $\geq n$. Let $T \subseteq \Aut(X)$ be a torus of
	maximal dimension.  We can apply 
	Theorem~\ref{thm:MainThm} to an isomorphism 
	$\theta\colon \Aut(X) \simto \Aut(\AA^{n})$ and find 
	that $S := \theta(T)^\circ \subset \Aut(\AA^{n})$ is a closed torus of dimension n.
	Since the index of the standard $n$-dimensional torus in its normalizer in $\Aut(\AA^n)$
	has finite index and since all $n$-dimensional tori in $\Aut(\AA^n)$ are conjugate 
	(see \cite{Bi1966Remarks-on-the-act}), 
	it follows that $S$ has finite index in $\theta(T)$. Hence $\theta^{-1}(S)$
	has finite index in $T$. Since $T$ is a divisible group,
	$\theta^{-1}(S) = T$ is an algebraic group.
	Thus we can apply Proposition~\ref{prop:XandCnIsomorphic}
	to the isomorphism $\theta^{-1}\colon \Aut(\AA^{n}) \simto \Aut(X)$ 
	and find that $X \simeq \AA^n$ as a variety.
\qed

\par\bigskip
\renewcommand{\MR}[1]{}
\bibliographystyle{amsalpha}

\providecommand{\href}[2]{#2}

\end{document}